\newcolumntype{L}{>{$}l<{$}}
\newcommand{\Pic}{\mathrm{Pic}}
\newcommand{\al}{\alpha}
\newcommand{\oset}[3][0ex]{%
  \mathrel{\mathop{#3}\limits^{
    \vbox to#1{\kern-2\ex@
    \hbox{\(\scriptstyle#2\)}\vss}}}}
\newcommand{\posmod}{\oset{+}{\rightarrow}} 
\newcommand{\negmod}{\oset{-}{\rightarrow}}
\newcommand{\PP}{\mathbb{P}}
\newcommand{\ZZ}{\mathbb Z}
\newcommand{\codim}{\mathrm{codim}}
\newcommand{\OO}{\mathcal O}
\newcommand*{\shom}{\mathcal{H}\kern -.5pt om} 
\newcommand{\rk}{\mathrm{rk}}
\newcommand{\Hom}{\mathrm{Hom}}
\newcommand{\ra}{\rightarrow}
\newcommand{\Ext}{\mathrm{Ext}}
\newcommand{\leqor}{\underset{{\scriptscriptstyle (}-{\scriptscriptstyle )}}{<}}
\newcommand{\twopartdef}[4]
{
	\left\{
		\begin{array}{ll}
			#1 & \mbox{if } #2 \\
			#3 &  #4
		\end{array}
	\right.
}
\newtheorem{theorem}{Theorem}[section]
\newtheorem{lemma}[theorem]{Lemma}
\newtheorem{proposition}[theorem]{Proposition}
\newtheorem{corollary}[theorem]{Corollary}
\theoremstyle{definition}
\newtheorem{remark}[theorem]{Remark}
\newtheorem*{example}{Example}
\newtheorem*{theorem*}{Theorem}
\newtheorem{definition}[theorem]{Definition}
\title{Stability of normal bundles of Brill-Noether curves}
\author{Izzet Coskun}
\address{Department of Mathematics, Statistics, and CS \\
University of Illinois at Chicago, Chicago IL 60607}
\email{icoskun@uic.edu}
\author{Geoffrey Smith}
\email{geoff.deg.smith@gmail.com}
\thanks{During the preparation of this article, Izzet Coskun was partially supported
by NSF FRG grant DMS-1664296 and NSF grant DMS-2200684, Geoffrey Smith was supported by an AMS-Simons Travel grant.}
\keywords{Brill-Noether curves, normal bundle, stability}
\subjclass[2010]{Primary: 14H60. Secondary: 14B99.}
\begin{document}

\begin{abstract}
    We prove that the normal bundle of a general Brill-Noether curve of genus $g \geq 1$ and degree $d$ in $\PP^r$ is semistable if $g=1$ or $
    g\geq \left \lceil \frac{5r}{2}\right\rceil r(r-1),
    $ or $d$ is larger than an explicit function of $g$ and $r$. We further prove that the normal bundle is in fact stable if $g\geq 2$ and either $g$ or $d$ satisfy slightly stronger bounds. In particular, for each $r$, there are at most finitely many $(d,g)$ with $g \geq 1$ (respectively, $g \geq 2$) for which the normal bundle of the general Brill-Noether curve in $\PP^r$ is not semistable (respectively, stable).   
\end{abstract}

\maketitle
\section{Introduction and statement of results}\label{sec-intro}

 In this paper, we study the (semi)stability of the normal bundle $N_{C|\PP^r}$ of a general smooth, projective curve $C$ of genus $g$ embedded in $\PP^r$ by a general nondegenerate map of degree $d$. We work over an algebraically closed field $k$ of arbitrary characteristic. The normal bundle controls the deformations of $C$ in $\PP^r$ and is a fundamental invariant of the embedded curve. It carries crucial information about the local structure of the Hilbert scheme and plays a central role in questions of arithmetic and moduli.

Recall that the slope of a vector bundle $V$ on a curve $C$ is defined by $\mu(V)= \frac{\deg(V)}{\rk(V)}.$ The bundle is called {\em (semi)stable} if every proper subbundle $W$ satisfies $$\mu(W) \leqor \mu(V).$$  Stable bundles satisfy nice cohomological and metric properties, and are the building blocks of all bundles via the Harder-Narasimhan and Jordan-H\"{o}lder filtrations. Consequently, it is important to know when naturally defined bundles (such as $N_{C|\PP^r}$) are (semi)stable. 

By the Brill-Noether Theorem \cite{griffithsharris, jensenpayne, osserman}, a general smooth projective curve $C$ of genus $g$ admits a nondegenerate map of degree $d$ to $\PP^r$ if and only if the Brill-Noether number $\rho(d,g,r)$
satisfies 
$$\rho(d,g,r):= g-(r+1)(g-d+r) \geq 0.$$ We call a triple of nonnegative integers $(d,g,r)$ a {\em Brill-Noether triple} or {\em BN triple} if $\rho(d,g,r)\geq 0$. When $r\geq 3$, the general such map is an embedding and there is a unique component of the Hilbert scheme that dominates the moduli space $\overline{M}_g$ and whose general member parameterizes nondegenerate, smooth genus $g$ curves of degree $d$ in $\PP^r$ \cite{Eishar}. A member of this component is called a {\em Brill-Noether} curve or {\em BN-curve} for short.

\subsection{Results} Define $$u := \left\lfloor \frac{(r+1)^2}{2(r-1)} \right \rfloor.$$ Let $k$ be the integer such that $(k-1)u < g-1 \leq k u$. Our two main theorems are the following.

\begin{theorem}\label{largeMain}
Let $C\subset \PP^r$ be a general BN-curve of degree $d$ and genus $g \geq 1$. Then $N_{C\vert \PP^r}$ is semistable if  one of the following holds:
\begin{enumerate}
    \item $g=1$,
    \item  $
    g\geq \binom{r-1}{2}+2+\left \lceil \frac{5r^2-7r}{2(r-1)}\right\rceil r(r-1),
    $ 
\item $d\geq \min\left(g+\frac{r^2}{4}+2r-3, (k+1)(r+1), (g-1)(2r-3)+r+1\right)$

\end{enumerate}
\end{theorem}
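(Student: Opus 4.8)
The plan is to exploit that semistability is an open condition in families, so that it suffices to exhibit, for each $(d,g)$ in the stated range, a single nodal specialization $X\subset\PP^r$ whose limiting normal bundle is semistable; the general BN-curve then inherits semistability by semicontinuity. Throughout I track the numerics: $N:=N_{C\vert\PP^r}$ has $\rk N=r-1$ and $\deg N=(r+1)d+2g-2$, so $\mu(N)=\frac{(r+1)d+2g-2}{r-1}$, and certifying semistability means bounding $\mu(W)\le\mu(N)$ for every subbundle $W$, equivalently $\mu(Q)\ge\mu(N)$ for every quotient. The case $g=1$ (condition (1)) I would treat separately using the special structure of elliptic curves: since $T_E\cong\OO_E$ has slope $0$, the Euler-type sequence $0\to\OO_E\to T_{\PP^r}\vert_E\to N_E\to 0$ realizes $N_E$ as a slope-raising quotient, which is amenable to direct analysis once one knows the restricted tangent bundle is semistable.

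The engine for (2) and (3) is an inductive degeneration that attaches rational and elliptic normal pieces. The base case is the rational normal curve of degree $r$, whose normal bundle is the balanced, hence polystable, bundle $\OO(r+2)^{\oplus(r-1)}$. For the inductive step I degenerate $C$ to a nodal union $X=Y\cup_\Gamma Z$, where $Z$ is such a spanning piece meeting $Y$ in the nodes $\Gamma$. The restriction of the normal bundle of the reducible curve to a component is that component's own normal bundle twisted up at the nodes, $N_X\vert_Y\cong N_{Y\vert\PP^r}(\Gamma)$ and similarly on $Z$, and these fit into $0\to N_X\vert_Y(-\Gamma)\to N_X\to N_X\vert_Z\to 0$. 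I then argue that if both twisted restrictions are semistable and their slopes are suitably balanced against $\mu(N_X)$, the glued bundle $N_X$ is itself semistable, and deforms to a semistable $N$.

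Matching the slopes across the nodes is exactly what produces the arithmetic thresholds. Each attachment changes $(d,g)$ by a controlled amount, and the admissible number of gluing points is constrained by the requirement that neither twisted restriction be destabilized; solving for the increment that keeps the two slopes balanced against $\mu(N_X)$ is where $u=\lfloor(r+1)^2/(2(r-1))\rfloor$ and the counter $k$ with $(k-1)u<g-1\le ku$ enter, $u$ being essentially half the normal-bundle slope $(r+1)^2/(r-1)$ of an elliptic normal curve. The three sub-bounds in (3) should correspond to three regimes of $d/g$: a chain of $k+1$ spanning pieces realizes $d\ge(k+1)(r+1)$; a very positive curve with $d\ge(g-1)(2r-3)+r+1$ admits a more economical degeneration in which the twisted restrictions have large slope and vanishing $h^1$; and the intermediate bound $g+\tfrac{r^2}{4}+2r-3$ comes from optimizing the number of elliptic attachments. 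Condition (2) is the genus-large analogue: once $g$ exceeds the stated multiple of $r(r-1)$, there is enough room to assemble $C$ entirely out of balanced semistable blocks irrespective of $d$.

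The main obstacle is controlling the limiting normal bundle at the nodes and ensuring semistability genuinely survives the smoothing. The flat limit of $N$ is not literally the normal bundle of the limit curve but a modification of it, so the technical heart is a cohomology/interpolation computation showing that, for sufficiently general gluing data, no subsheaf supported across a node can raise the slope, together with the requisite $h^1$-vanishing to identify the limit correctly. Verifying these vanishing statements uniformly across the three regimes — and, equally importantly, checking that the inductive increments cover every $(d,g)$ below the stated thresholds without gaps — is where I expect the bulk of the delicate bookkeeping to lie.
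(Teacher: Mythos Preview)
Your outline has the right high-level shape---degenerate to nodal curves, control the restrictions to components, use Lemma~\ref{lem:naive}---but there is a concrete error that propagates into a real gap.

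The formula $N_X\vert_Y\cong N_{Y\vert\PP^r}(\Gamma)$ is wrong. By Hartshorne--Hirschowitz, the restriction is a \emph{positive elementary modification}
\[
N_X\vert_Y\;\cong\; N_{Y\vert\PP^r}[p_1\posmod Z]\cdots[p_j\posmod Z],
\]
which at each node twists up only along the rank-one sub spanned by $T_{p_i}Z$, not the whole bundle. In particular $\deg(N_X\vert_Y)=\deg N_{Y\vert\PP^r}+j$, not $\deg N_{Y\vert\PP^r}+j(r-1)$; your slope bookkeeping is off, and more importantly a rank-one elementary modification of a semistable bundle is \emph{not} automatically semistable. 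This is exactly the step you gloss over when you write ``if both twisted restrictions are semistable''---you have not said why they are. The paper handles this in two distinct ways depending on the degeneration: for attachments of lines and $(r+1)$-secant degree-$(r-1)$ rational curves, one further specializes the attachment points so that $r-1$ modifications collide at a single point with linearly independent tangent directions, which \emph{does} yield a full twist $N_{Y\vert\PP^r}(p)$ (Lemma~\ref{ratlCurves}, Proposition~\ref{addingLines}); for the elliptic-curve configurations that produce the $(k+1)(r+1)$ bound, one proves directly (Proposition~\ref{prop-g1mod}, using the Atiyah classification and the $g=1$ case) that a \emph{general} rank-one modification of a genus-one normal bundle remains semistable. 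Without one of these mechanisms your induction does not close. Relatedly, the three thresholds in (3) are not quite as you describe: the bounds $g+\tfrac{r^2}{4}+2r-3$ and $(g-1)(2r-3)+r+1$ both come from attaching $1$- and $2$-secant lines to a genus-one curve (different estimates on the same quantity $a_b$), not from an $h^1$-vanishing argument, and the $(k+1)(r+1)$ bound comes from a hub-and-spoke configuration of elliptic normal curves rather than a chain.

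Separately, your proposed treatment of $g=1$ does not work: a quotient of a semistable bundle need not be semistable, so semistability of $T_{\PP^r}\vert_E$ says nothing directly about $N_E$. The paper's argument is quite different---descending induction on $r$ starting from Ein--Lazarsfeld's result for elliptic normal curves, projecting from a general point $p\in\PP^{r+1}$, and using a determinant/torsion argument together with the Atiyah classification to rule out every possible slope of a destabilizing quotient of $N_{\pi_p(\tilde C)\vert\PP^r}$. This $g=1$ result (and its refinement in Proposition~\ref{prop-g1distinct}) is then the essential input to the modification arguments above.
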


Let $b_2(r)$ be the least integer such that for all $d \geq b_2(r)$, there exist positive integers $d_1, d_2$ such that $d= d_1 + d_2$, $d_1, d_2 \geq r+1$ and $\gcd(r-1, 2d_1 + 1) =1$. 

\begin{theorem}\label{thm-introstab}
   Let $C\subset \PP^r$ be a general BN-curve of degree $d$ and genus $g \geq 2$.  Then $N_{C\vert \PP^r}$ is stable if  one of the following holds:
\begin{enumerate}
    \item  $g \geq \binom{r-1}{2}+3+\left \lceil\frac{2(r+1)b_2(r)+3r^2-13r-2}{2(r-1)}\right \rceil r(r-1),$
\item $d\geq b_2(r)+ \min\left(g+\frac{r^2}{4}+r-5, k(r+1), (g-1)(2r-3)\right)$.
\end{enumerate} 
\end{theorem}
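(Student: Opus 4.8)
The hypotheses of the theorem are (term by term) at least as strong as those of \Cref{largeMain}: since $b_2(r)$ exceeds $2(r+1)$ while the minima in~(2) are only lowered by terms of size $O(r)$, and the genus threshold in~(1) exceeds that of \Cref{largeMain}(2), semistability of $N_{C|\PP^r}$ for the general BN-curve $C$ is already available. The plan is thus to upgrade semistability to strict stability. The conceptual engine is the elementary observation that a semistable bundle $V$ with $\gcd(\deg V,\rk V)=1$ is automatically stable: a proper subbundle $W$ of rank $s$ with $\mu(W)=\mu(V)$ satisfies $s\deg V=(\rk V)\deg W$, so $\rk V\mid s\deg V$, and coprimality forces $\rk V\mid s$, impossible for $0<s<\rk V$. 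The normal bundle has rank $r-1$ and degree $(r+1)d+2g-2\equiv 2(d+g-1)\pmod{r-1}$, so this gcd is rarely $1$ for the curve itself; the number $b_2(r)$ and the condition $\gcd(r-1,2d_1+1)=1$ exist precisely to produce, after a degeneration, a setting in which an analogous coprimality does hold.

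Concretely, I would specialize $C$ to a reducible nodal BN-curve $X=C_1\cup_\Gamma C_2$, with $C_i$ a BN-curve of degree $d_i$, where $d=d_1+d_2$, $d_1,d_2\geq r+1$, and $\gcd(r-1,2d_1+1)=1$, the genus being apportioned so that $X$ has arithmetic genus $g$. Passing to a one-parameter smoothing of $X$ inside $\PP^r$, the limit of the normal bundles is a bundle on $X$ whose restriction to each component is an elementary modification of $N_{C_i|\PP^r}$ supported along the nodes $\Gamma$, glued by the usual restriction exact sequences. A degree count shows that the condition on $d_1$ makes the relevant rank-$(r-1)$ degree coprime to $r-1$; feeding this together with the semistability supplied by \Cref{largeMain} into the criterion above shows that no subsheaf of the limiting normal bundle can attain slope exactly $\mu(N_{C|\PP^r})$. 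Since stability of the limiting bundle on $X$ is an open condition in the smoothing family, the general smooth BN-curve of degree $d$ and genus $g$ then has stable normal bundle.

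What remains is to verify that each hypothesis guarantees an admissible split. Because $k$ depends only on $g$, in case~(2) the component $C_2$ may be taken to carry the full genus $g$ at a degree $d_2$ lying in the range of \Cref{largeMain}(3), whereupon the defining property of $b_2(r)$ ensures that the residual degree $d-d_2$—which the hypothesis $d\geq b_2(r)+\min(\cdots)$ keeps at least $b_2(r)$—admits a further splitting realizing the coprimality on $d_1$ together with $d_1,d_2\geq r+1$. Case~(1) is the parallel statement in the high-genus regime, the larger genus threshold (relative to \Cref{largeMain}(2)) accounting for the degree and genus diverted to the coprimality component. These verifications are routine but delicate bookkeeping.

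The crux is the structural analysis of the normal bundle through the degeneration: establishing the precise modification-and-gluing description of the limiting normal bundle along $\Gamma$, carrying out the degree bookkeeping that converts $\gcd(r-1,2d_1+1)=1$ into coprimality of the relevant rank-$(r-1)$ degree, and—most importantly—proving that this coprimality genuinely obstructs \emph{every} potential subsheaf of critical slope in the limit, so that strict stability, rather than mere semistability, survives the passage to the general fiber. Ensuring that the limit-stability criterion is open in the smoothing family and compatible with the $\PP^r$-embedded setting is the other point requiring care.
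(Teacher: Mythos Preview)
Your proposal has a genuine gap at its core. You degenerate $C$ to $X=C_1\cup_\Gamma C_2$ and then invoke \Cref{largeMain} for the semistability of the restriction $N_{X|\PP^r}|_{C_2}$. But \Cref{largeMain} gives semistability of $N_{C_2|\PP^r}$, whereas $N_{X|\PP^r}|_{C_2}\cong N_{C_2|\PP^r}[p_1\posmod C_1]\cdots[p_m\posmod C_1]$ is an elementary modification of it at the nodes. A general positive elementary modification of a strictly semistable bundle need not remain semistable (e.g.\ $L\oplus L$ modified toward one summand becomes $L(p)\oplus L$). The paper proves the needed statement only for genus~$1$ (Proposition~\ref{prop-g1mod}), and that proof relies essentially on Atiyah's classification of semistable bundles on elliptic curves together with the specific structure of the Jordan--H\"older factors established in Theorem~\ref{genus1} and Proposition~\ref{prop-g1distinct}. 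No analogue is available---or expected to be easy---for $C_2$ of arbitrary genus, and you cannot assume $N_{C_2|\PP^r}$ is stable, since that is essentially what you are trying to prove.

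There is also a bookkeeping inconsistency. You say $C_2$ carries the full genus $g$ and that the coprimality condition is $\gcd(r-1,2d_1+1)=1$. The latter is the degree of $N_{X|\PP^r}|_{C_1}$ modulo $r-1$ precisely when $C_1$ has genus~$1$ and meets $C_2$ in a single node; but then $p_a(X)=g+1$, not $g$. If instead $C_1$ is rational, the relevant gcd becomes $\gcd(r-1,2d_1-1)$. Your ``further splitting'' of $d-d_2$ via the definition of $b_2(r)$ suggests a three-component configuration, which compounds the genus problem.

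The paper avoids all of this by never needing semistability of a modified normal bundle in genus $\geq 2$. It first proves stability for $g=2$ by degenerating to two $1$-secant elliptic curves (Proposition~\ref{genus2Stab}), where Proposition~\ref{prop-g1mod} handles both restrictions. It then bootstraps: for condition~(1), attach $1$- and $2$-secant lines (Proposition~\ref{addingLines}) and $(r+1)$-secant rational normal curves of degree $r-1$ (Corollary~\ref{cor-rnc}), where the restrictions to the new components are balanced by direct computation; for condition~(2), use configurations consisting entirely of elliptic curves (Corollary~\ref{cor-se}), so that Proposition~\ref{prop-g1mod} again applies to every component.
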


\begin{remark}\label{rem-intro1}
If $p \geq 5$ is the smallest prime number  that does not divide $r-1$, then $b_2(r) \leq 2r+\frac{p-1}{2}$.  In general, $b_2(r)= 2r + O(\log(r))$ and  $b_2(r) \leq \frac{5r-3}{2}$ (see Proposition \ref{b2Bounds}). In fact, if $r \geq 1636$, then $b_2(r) \leq 2.01r + 2.015$ (see Remark \ref{rem-be}).
\end{remark}

\begin{remark}\label{rem-intro2}
At the expense of slightly weakening the bounds, one can replace the expression $d \geq (k+1)(r+1)$ in Theorem \ref{largeMain} with $d \geq \frac{2(r^2-1)}{r^2+3}(g-1) + 2r+2$ to obtain an expression purely in terms of $g$ and $r$. Similarly, one can replace the expression $d \geq b_2(r) + k(r+1)$ in Theorem \ref{thm-introstab} with $d \geq \frac{2(r^2-1)}{r^2+3}(g-1) + b_2(r) + r+1.$ 
\end{remark}

\begin{remark}
   Theorems \ref{largeMain} and \ref{thm-introstab} imply that for each $r$, there are finitely many pairs $(d,g)$ with $g \geq 1$ (respectively, $g\geq 2$) for which the normal bundle of the general BN-curve in $\PP^r$ is not semistable (respectively, stable).
\end{remark}

\begin{remark}
Coskun, Larson and Vogt \cite[Conjecture 1.1]{clv22} conjectured that there are finitely many triples $(d,g,r)$ with $g \geq 2$ for which the normal bundle of a general BN-curve is not stable. Our results provide strong evidence for this conjecture and reduce the problem to studying the normal bundles of certain low degree and low genus curves in $\PP^r$. 
\end{remark}

We have sharper results in the specific case $r=4$. First, using a non-degenerative cohomological method, we prove the following.
\begin{theorem}[Theorem \ref{724}]
If $C\subset \PP^4$ is a general BN-curve of degree 7 and genus 2, and the ground field does not have characteristic 2, then $N_{C\vert \PP^4}$ is stable. 
\end{theorem}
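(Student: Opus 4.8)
The plan is to establish stability by working directly on the smooth curve $C$ and controlling the slopes of all potential destabilizing subbundles through the vanishing of suitable cohomology groups, rather than by degenerating $C$. Write $N := N_{C\vert\PP^4}$. From the degree formula $\deg N = (r+1)d + 2g - 2 = 5\cdot 7 + 2 = 37$ and $\rk N = r-1 = 3$, we get $\mu(N) = \tfrac{37}{3}$. A proper subbundle $F\subset N$ destabilizes $N$ exactly when $\mu(F)\geq \tfrac{37}{3}$. If $\rk F = 1$ this says $\deg F \geq 13$; if $\rk F = 2$, then $\det F = \bigwedge^2 F$ is a line subbundle of $\bigwedge^2 N \cong N^\vee\otimes \det N$ of degree $\geq 25$, equivalently a line subbundle of $N^\vee$ of degree $\geq -12$. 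Hence it suffices to prove the two vanishing statements
\[
H^0\bigl(N\otimes L^{-1}\bigr)=0 \ \text{ for all } L \text{ with } \deg L\geq 13, \qquad H^0\bigl(N^\vee\otimes A^{-1}\bigr)=0 \ \text{ for all } A \text{ with } \deg A\geq -12.
\]

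First I would translate these into cohomology computations on $C$ using the two defining sequences, the normal bundle sequence $0\to T_C\to T_{\PP^4}\vert_C\to N\to 0$ and the restricted Euler sequence $0\to \OO_C\to H^0(\OO_C(1))\otimes\OO_C(1)\to T_{\PP^4}\vert_C\to 0$, together with their duals. Twisting by $L^{-1}$ (resp.\ $A^{-1}$) and chasing the long exact sequences, the line-bundle terms are computed by Riemann--Roch on the genus-$2$ curve, and the vanishing is reduced to the surjectivity of natural multiplication maps of sections, of the shape $H^0(\OO_C(1))\otimes H^0(\omega_C(-1)\otimes L)\ra H^0(\omega_C\otimes L)$, and the injectivity of the accompanying connecting maps. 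Since $\chi(N\otimes L^{-1}) = 37 - 3\deg L + 3(1-g)$ equals $-5$ when $\deg L = 13$ and decreases thereafter, the generic $L$ of each relevant degree carries no sections; the real work is to upgrade this to every $L$.

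The crux, and the main obstacle, is controlling the jumping loci in the two-dimensional families $\Pic^{e}(C)$ where these multiplication maps drop rank and a destabilizing subbundle could a priori appear. Here I would use the explicit hyperelliptic model $y^2=f(x)$ of the genus-$2$ curve, which is available precisely in characteristic $\neq 2$: every line bundle is then described through the degree-$2$ map $x\colon C\to\PP^1$ and its pencil $g^1_2 = \omega_C$, and the base-point-free pencil trick applied to this pencil governs the multiplication maps above. Classifying the special $L$ (those differing from a general one by the hyperelliptic pencil or by a $2$-torsion class) and verifying surjectivity case by case yields the vanishing for all $L$ with $\deg L \geq 13$, and the dual argument handles all $A$ with $\deg A\geq -12$, giving stability. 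Because stability is an open condition and the BN-curves of degree $7$ and genus $2$ in $\PP^4$ form an irreducible family dominating $\overline{M}_2$, it is enough to carry out this computation for a single conveniently chosen smooth curve and projection.

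I expect the hardest part to be exactly this uniform control over the full families $\Pic^{13}(C)$ and $\Pic^{-12}(C)$: the Euler-characteristic count only settles the generic line bundle, and a genuine argument is needed to exclude sections for the special classes. This is the step that relies on the hyperelliptic branch structure and on the $2$-torsion of the Jacobian, both of which degenerate in characteristic $2$, which is why that characteristic must be excluded.
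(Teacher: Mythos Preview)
Your reduction is correct: stability of a rank-$3$ bundle of slope $\tfrac{37}{3}$ is equivalent to the two families of vanishings you write down. But what you have submitted is a plan, not a proof. Everything is in the conditional (``I would translate\ldots'', ``Here I would use\ldots''), and you yourself flag the crux---uniform control of the jumping loci over all of $\Pic^{13}(C)$ and $\Pic^{12}(C)$---as ``the hardest part'' without supplying any argument for it. The Euler-characteristic count, as you say, only handles the generic line bundle; nothing in the proposal rules out a special $L$ (for instance one of the form $aK + bD$ with $D = H - 2K$, which is exactly the class the scroll picks out) for which a section might appear. Invoking ``the base-point-free pencil trick'' and ``verifying surjectivity case by case'' is not a proof until the cases are written out and checked, and you have not done so for a single $L$.

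The paper's approach avoids this two-parameter problem entirely. It realizes $C$ as a projection of a $(2,3)$ curve $\tilde{C}$ on a scroll $\tilde{S}\cong\PP^1\times\PP^1\hookrightarrow\PP^5$, giving a canonical line subbundle $N_{C\vert S}\cong\OO_C(2H-K)$ of degree $12$. Since $12$ and $\tfrac{37}{3}$ are consecutive Farey fractions of denominator $\le 3$, stability is reduced to two concrete checks: the inclusion $\OO_C(2H-K)\hookrightarrow N$ does not split, and the rank-$2$ quotient $N_{S\vert\PP^4}\vert_C$ is stable. For the second, a Franchetta-type argument shows that any destabilizing line quotient must restrict from $\Pic(\tilde{S})$, collapsing your two-dimensional family $\Pic^{\le 12}(C)$ to a discrete set of candidates, which are then dispatched by an explicit computation with the Euler sequence on $\tilde{S}$. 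That reduction is the substitute for the ``uniform control'' you are missing. Incidentally, the characteristic-$2$ exclusion in the paper arises not from the hyperelliptic branch locus or $2$-torsion as you suggest, but from factors of $2$ in the Jacobian matrix of the monomial parametrization of the scroll.
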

Using this theorem and more precise versions of the numerics used to prove Theorem \ref{largeMain}, we prove the following.
\begin{theorem}\label{theorem-P4}
Assume the ground field has characteristic different from $2$. Let $(d,g)$ be a pair of positive integers with $\rho(d,g,4)\geq 0$ and $(d,g)\neq (6,2)$. If $(d,g)$ is not one of the 48 values listed in Table \ref{table-P4ss},  then a general BN-curve of degree $d$ genus $g$ in $\PP^4$ has semistable normal bundle. Furthermore, if $g \geq 2$, $(d,g)$ is not $(8,5)$ or one of the 63 additional values listed in Table \ref{table-P4s}, then a general BN-curve of degree $d$ genus $g$ in $\PP^4$ has stable normal bundle.
\end{theorem}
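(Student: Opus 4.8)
The plan is to feed the inductive machinery behind Theorems~\ref{largeMain} and~\ref{thm-introstab} the extra input of Theorem~\ref{724} and to re-run the numerics with $r=4$ fixed, where every slope and Euler characteristic can be computed exactly rather than bounded coarsely. As a first step I would overlay, on the cone of BN-pairs $\{(d,g):\rho(d,g,4)\ge 0\}$, the regions cut out by conditions (2)--(3) of Theorem~\ref{largeMain} and (1)--(2) of Theorem~\ref{thm-introstab} specialized to $r=4$. Each such condition asserts that $g$ or $d$ is large, so its complement inside the BN-cone is a finite set of small pairs; concretely, condition (2) of Theorem~\ref{largeMain} already yields semistability once $g\ge 113$, while condition (3) handles all sufficiently large $d$ for each smaller $g$. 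It is exactly this finite residual set that the refined $r=4$ argument must resolve, and within it the two tables will record the genuine failures.

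For the inductive engine I would degenerate a general BN-curve $C$ of type $(d,g)$ to a nodal curve $C_0\cup\Gamma$, where $\Gamma$ is a line or conic meeting a BN-curve $C_0$ of smaller invariants transversally. Attaching a $1$-secant line realizes the move $(d,g)\mapsto(d+1,g)$ and a $2$-secant line realizes $(d,g)\mapsto(d+1,g+1)$, so these moves let me climb to every non-exceptional pair from a short list of base cases while remaining in the correct Hilbert-scheme component. Along the nodes the standard modification exact sequences present $N_{C_0\cup\Gamma}\vert_{C_0}$ as a positive elementary modification of $N_{C_0}$, so that (semi)stability of the limiting bundle---and hence, by openness of the (semi)stable locus in families, of $N_{C\vert\PP^4}$---reduces to checking that this modification admits no destabilizing subsheaf. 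Here the case $r=4$ is decisive: the rank is $3$ and the slope $\mu(N_{C\vert\PP^4})=\tfrac{5d+2g-2}{3}$ is completely explicit, so the inequalities $\mu(W)\leqor\mu(N_{C\vert\PP^4})$ can be tested exactly against the finitely many admissible ranks and degrees of a rank-$1$ or rank-$2$ subsheaf $W$; this exactness is precisely the sharpening over the coarse bounds of Theorem~\ref{largeMain}. The divisibility datum separating stability from mere semistability is the condition $\gcd(3,2d_1+1)=1$ encoded in $b_2(4)$, which for $r-1=3$ reads $d_1\not\equiv 1\pmod 3$; propagating this congruence through the line-attaching moves is what distinguishes the semistable exceptions of Table~\ref{table-P4ss} from the additional stable exceptions of Table~\ref{table-P4s}.

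I expect the main obstacle to lie in the base of the induction and in the bookkeeping of exceptions. The non-degenerative cohomological method of Theorem~\ref{724} anchors the stability induction at $(7,2)$, supplying the low-genus seed that part (1) of Theorem~\ref{largeMain} cannot; the remaining small pairs must be checked one at a time, either by the same cohomological vanishing technique or by an explicit degeneration, and it is here that the genuinely non-(semi)stable curves isolated in the tables emerge. The delicate structural point is to verify that every non-listed pair is reachable by line-attaching moves from a verified base case \emph{without routing through an exceptional pair}---that is, that no good pair's only predecessors are bad---so that the induction never stalls. Finally one must confirm that each degeneration is a flat limit inside the BN-component and that the normal bundle specializes as asserted, so that (semi)stability of the special fiber indeed forces it for the general member; the characteristic $\neq 2$ hypothesis enters through Theorem~\ref{724} and must be carried along throughout.
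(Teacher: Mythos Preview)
Your inductive engine has a genuine gap. Attaching a \emph{single} $1$-secant or $2$-secant line does not, by itself, propagate (semi)stability. By Lemma~\ref{lines} the question reduces to the (semi)stability of $N_{C_0\vert\PP^4}[2p\posmod\ell]$, a positive elementary modification of $N_{C_0\vert\PP^4}$ at one point in one direction, and you give no argument for this. The paper establishes such a statement only for genus~$1$ (Proposition~\ref{prop-g1mod}), using the Atiyah classification in an essential way; for higher genus there is no analogous structure theorem, and your remark that ``the inequalities $\mu(W)\leqor\mu(N_{C\vert\PP^4})$ can be tested exactly'' only constrains the numerics of a hypothetical destabilizing $W$---it does nothing to rule one out. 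The paper's Proposition~\ref{addingLines} circumvents this obstacle by attaching lines in batches with $a+b\equiv 0\pmod{3}$ and then specializing them so that at each attaching point the $r-1=3$ tangent directions span the whole normal space; the limiting modification is then a full twist $N_{C_0\vert\PP^4}(2D)$, whose (semi)stability is automatic. A single line cannot be massaged into a full twist, so your moves $(d,g)\mapsto(d+1,g)$ and $(d,g)\mapsto(d+1,g+1)$ are not available.

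You also omit three ingredients the paper uses essentially. First, the move $(d,g)\mapsto(d+9,g+12)$ obtained by attaching three $5$-secant rational cubics in hyperplanes (Corollary~\ref{cor-rnc}); this is what reduces everything to the window $2\le g\le 13$ and organizes the exceptional tables into the arithmetic progressions $(9k+d_0,\,12k+g_0)$ you see there. Second, interpolation (Corollary~\ref{interpolation}), which yields semistability for free whenever $3\mid 2d+2g-2$; without it many entries in Table~\ref{table-P4ss} would remain open. Third, configurations of several elliptic curves (Corollary~\ref{cor-1configgen}), which supply the low thresholds in Table~\ref{table-degenThresholds} that line-attaching alone cannot reach. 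Conics play no role. In short, the paper's proof is not a refinement of the two-line induction you sketch but a combination of four distinct reduction mechanisms, each covering cases the others miss.
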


\begin{remark}
Our proof of Theorem \ref{724} requires the ground field to have characteristic different from  $2$. When the characteristic is $2$, we do not know the semistability of $N_{C|\PP^4}$  when $(d,g)\in \{(7,2),(16,14)\}$ in addition to the cases listed in Table \ref{table-P4ss}. 
\end{remark}

\subsection{Unstable normal bundles}\label{sec-introunstable} We call a  BN triple $(d,g,r)$ a {\em (semi)stable triple} if the general BN-curve of degree $d$ and genus $g$ in $\PP^r$ has a (semi)stable normal bundle. Otherwise, we call $(d,g,r)$ an {\em unstable} triple. There are some examples of unstable triples.

\begin{example}[Canonical curves] The triples $(6,4,3)$ and $(10,6,5)$ corresponding to canonical curves of genus $4$ and $6$ are unstable triples (see also \cite{clv23}). A canonical curve $C$ of genus $4$ is a $(2,3)$ complete intersection in $\PP^3$ and the normal bundle of the curve in the quadric destabilizes $N_{C|\PP^3}$. A general canonical curve $C$ of genus $6$ lies in a del Pezzo surface $X$ of degree $5$ and $N_{C|X}$ destabilizes $N_{C|\PP^5}$.

The triple $(8,5,4)$ corresponding to a canonical curve of genus $5$ is semistable but not stable. The general canonical curve $C$ of genus 5 is a $(2,2,2)$ complete intersection in $\PP^5$, hence $N_{C|\PP^5}= \OO_C(2)^{\oplus 3}$ and is semistable but not stable.
\end{example}

\begin{example}[Genus $2$ curves] The triples $(5,2,3)$, $(6,2,4)$, $(7,2,5)$ and $(8,2,6)$ are unstable. Every genus $2$ curve $C$ is hyperelliptic and lies in a rational surface scroll $X$ swept out by lines spanned by fibers of the $g_2^1$. The normal bundle $N_{C|X}$ is a line bundle of degree 12 which destabilizes $N_{C|\PP^r}$ for these 4 triples.
\end{example}

\begin{remark}
  There may be special BN-curves whose normal bundles are unstable even when $(d,g,r)$ is a (semi)stable BN triple. For example, normal bundles of trigonal or tetragonal canonical curves of $g=5$ or $g \geq 7$ are unstable even though the general canonical curve has semistable normal bundle \cite{AFO, clv23}. In general, given an arbitrary BN-curve $C \subset \PP^r$, determining the Harder-Narasimhan filtration of $N_{C|\PP^r}$ is an interesting question, especially when $C$ is a multi-canonical curve.  
\end{remark}

\subsection{History of the problem} Normal bundles of curves in projective space have been studied extensively (see for example \cite{aly, lv22} and references therein). 

If $C \cong \PP^1$, then the normal bundle is a direct sum of line bundles $N_{C/\PP^r}= \oplus_{i=1}^{r-1} \OO_{\PP^1}(a_i)$.  In characteristic distinct from 2, the normal bundle of the general rational curve is balanced \cite{ghionesacchiero, sacchiero80, ran} and the loci with a fixed splitting type has been studied \cite{sacchiero82, eisenbudvandeven, eisenbudvandeven82, coskunriedl}.  In characteristic 2, the splitting type of the normal bundle satisfies a parity condition $a_i \equiv d \mod{2}$ and the splitting type of the normal bundle of the general rational curve is as balanced as possible subject to this parity condition (see \cite{clv22, lv22} and \cite[Proposition 4.3, Remark 4.4]{clv24}). 

Normal bundles of genus one curves have been studied by \cite{einlazarsfeld, ellingsrudhirschowitz, ellingsrudlaksov}. Ein and Lazarsfeld \cite{einlazarsfeld} prove that the normal bundle of an  elliptic normal curve is semistable.

In the 1980s several authors investigated the stability of normal bundles of BN-curves in $\PP^3$ in low degree and genus.  The stability was proved for $(d, g) = (6, 2)$ by Sacchiero \cite{sacchiero}, for $(d, g) = (9, 9)$ by Newstead \cite{newstead}, for $(d, g) = (6, 3)$ by Ellia \cite{ellia}, and for $(d, g) = (7, 5)$ by Ballico and Ellia \cite{ballicoellia}.  Ellingsrud and Hirschowitz \cite{ellingsrudhirschowitz} announced a proof of stability of normal bundles of space curves in an asymptotic range of degrees and genera. Finally, Coskun, Larson and Vogt in \cite{clv22} proved that the normal bundle of a general BN space curve of genus $g \geq 2$ is stable except when $(d,g) \in \{ (5,2), (6,4)\}$. Consequently, in the rest of the article we will concentrate on the case $r \geq 4$. Forthcoming work of Coskun, Jovinelly and Larson \cite{cjl24} will analyze the cases in $\PP^4$ not covered in this paper.

The stability of the normal bundle of a general canonical curve has also received considerable attention.  Aprodu, Farkas and Ortega \cite{AFO} conjectured that the normal bundle of a general canonical curve of genus at least 7 is stable and settled the case $g=7$. Bruns \cite{Bruns} proved the stability when $g=8$. Finally, Coskun, Larson and Vogt \cite{clv23} proved that the normal bundle of a general canonical curve of genus $g \geq 7$ is semistable.

Larson and Vogt \cite{lv22} solved the interpolation problem for normal bundles of BN-curves. When the rank divides the degree interpolation implies semistability. Hence, Larson and Vogt proved the semistability of normal bundles of BN-curves in a large number of cases. 

Ballico and Ramella \cite{br99} prove that if $g \geq 2r$ and $d \geq 2g+3r+1$, then the general BN curve has a stable normal bundle.  Recently, Ran obtained different asymptotic (semi)stability results for the normal bundles of BN-curves in $\PP^r$ and more generally  normal bundles of curves in hypersurfaces \cite{ran23}. Our results significantly improve the earlier results for $\PP^r$. In particular,  we prove that for each $r$  there are at most finitely many BN triples $(d,g,r)$ which may fail to be (semi)stable. The semistability of the normal bundles of curves has already found applications to Ulrich bundles (see \cite{aclr}).

\subsection{The strategy} To prove Theorems \ref{largeMain} and \ref{thm-introstab}, we first study the case  $g=1$. By the result of Ein and Lazarsfeld \cite{einlazarsfeld}, the normal bundle of an elliptic normal curve is semistable. We show the same for the general elliptic curve using descending induction on $r$. 

We then induct on the degree $d$ and the genus $g$ by specializing $C$ to a reducible BN-curve where some of the components are rational and elliptic curves meeting the rest of the curve in a specified number of points. The degenerations we use are outlined in Section \ref{degenerationSection}. The case of $(7,2,4)$ requires a more detailed and subtle analysis which we carry out in \S \ref{724Section}.

\subsection*{Organization of the paper} We recall basic facts about normal bundles of nodal curves and limits of BN-curves in \S \ref{sec-prelim}.  In \S \ref{sec-elliptic}, we study the case of elliptic curves. In  \S \ref{sec-degenerations}  we carry out our inductive procedure. We prove Theorem \ref{largeMain} and give more precise bounds for BN-curves in $\PP^4$ in \S \ref{largeSection}. Finally, 
in \S \ref{724Section}, we study the case $(7,2,4)$. 

\subsection*{Acknowledgements} We thank Eric Jovinelly, Eric Larson and Isabel Vogt for many discussions about normal bundles of BN-curves.  We thank Angelo Lopez for pointing out the reference \cite{br99}. We are especially grateful to Eric Larson who provided the arguments for Proposition \ref{prop-g1distinct}, corrected an earlier error in our exposition  and helped   significantly improve the paper. We thank the referees for their careful reading of the paper and their numerous suggestions and corrections.

\section{Preliminaries}\label{sec-prelim}
In this section, we recall some preliminaries concerning normal bundles of nodal curves.

\subsection{Normal bundles} Let $C \subset \PP^r$ be a smooth, projective curve of degree $d$ and genus $g$. The exact sequence
$$0 \to TC \to T\PP^r|_C \to N_{C|\PP^r} \to 0$$
implies that $\deg(N_{C|\PP^r}) = d(r+1) + 2g -2$. Since the rank of $N_{C|\PP^r}$ is $r-1$, the slope is given by  $$\mu(N_{C|\PP^r}) = d +  \frac{2(d+g-1)}{r-1}.$$

\subsection{Elementary modifications and normal bundles of nodal curves} Let $X$ be a projective variety and let $V$ be a vector bundle on $X$. Let $D\subset X$ be a Cartier divisor and let $W \subset V|_D$ be a subbundle of the restriction of $V$ to $D$. Then {\em the negative elementary modification} $V[D \stackrel{-}{\to}W]$ of $V$ along $D$ towards  $W$ is defined by the exact sequence
$$0 \to V[D \stackrel{-}{\to}W] \to V \to V|_D/W \to 0.$$ The {\em positive elementary modification} of $V$ along $D$ towards $W$ is defined by 
$$V[D \stackrel{+}{\to}W]:=V[D \stackrel{-}{\to}W](D).$$ We refer the reader to \cite[\S 2-6]{aly} for a detailed discussion of the properties of modifications. The  modification $V[D \stackrel{+}{\to}W]$ is naturally isomorphic to $V$ outside of $D$, so one can easily define multiple modifications of $V$ along divisors with disjoint support, which we will denote by $V[D_1 \stackrel{+}{\to}W_1] \cdots [D_j\stackrel{+}{\to}W_j]$. When the supports of the divisors meet, then we will assume that the bundle $W_i$ extends to  a bundle in a neighborhood of $D_i$, so that we can define multiple modifications. We refer the reader to \cite{aly, lv22} for details about multiple modifications and their properties.

When $C= X \cup Y \subset \PP^r$ be a connected nodal curve. Then at a node $p_i$ of $C$, $T_{p_i}Y$ defines a normal direction in $N_{X|\PP^r}|_{p_i}$. We write $N_{X|\PP^r}[p_i\stackrel{+}{\to}Y]$ for the positive elementary modification $N_{X|\PP^r}[p_i\stackrel{+}{\to}T_{p_i}Y]$. We then have the following fundamental observation of Hartshorne and Hirschowitz.

\begin{lemma}\cite[Corollary 3.2]{hartshornehirschowitz}
Let $C=X\cup Y \in \PP^r$ be a connected nodal curve. Let $X \cap Y= \{ p_1, \dots, p_j\}$. Then 
$$N_{C|\PP^r}|_X \cong N_{X|\PP^r}[p_1 \stackrel{+}{\to} Y ] \cdots [p_j \stackrel{+}{\to} Y ]$$
\end{lemma}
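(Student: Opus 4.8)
The plan is to reduce the statement to a local computation at each node and then glue. Since $C$ and $X$ agree as schemes away from the finite set $\{p_1,\dots,p_j\}$, the sheaves $N_{C|\PP^r}|_X$ and $N_{X|\PP^r}$ are canonically identified over $X\setminus\{p_1,\dots,p_j\}$, and the positive modification $[p_i\stackrel{+}{\to}Y]$ alters $N_{X|\PP^r}$ only in a neighborhood of $p_i$. As the supports $\{p_i\}$ are disjoint, it suffices to construct, for each node separately, an isomorphism $N_{C|\PP^r}|_X\cong N_{X|\PP^r}[p_i\stackrel{+}{\to}T_{p_i}Y]$ on a neighborhood of $p_i$ that agrees with the canonical identification off $p_i$; these then glue to the desired global isomorphism.

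Because a nodal curve is a local complete intersection, $\OI_C/\OI_C^2$ and $\OI_X/\OI_X^2$ are locally free of rank $r-1$, and $N_{C|\PP^r}$, $N_{X|\PP^r}$ are their $\OO$-duals. Fix a node $p_i$ and choose \'etale-local coordinates $x_1,\dots,x_r$ centered at $p_i$ with $X=\{x_2=\cdots=x_r=0\}$ and $Y=\{x_1=x_3=\cdots=x_r=0\}$, so that $\OI_X=(x_2,\dots,x_r)$, $\OI_Y=(x_1,x_3,\dots,x_r)$, and therefore $\OI_C=\OI_X\cap\OI_Y=(x_1x_2,x_3,\dots,x_r)$. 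The inclusion $\OI_C\subseteq\OI_X$ induces a map of conormal sheaves $(\OI_C/\OI_C^2)\otimes\OO_X\to\OI_X/\OI_X^2$ which, in the bases coming from the displayed generators, sends $\overline{x_1x_2}\mapsto x_1\,\overline{x_2}$ and $\overline{x_k}\mapsto\overline{x_k}$ for $k\geq 3$.

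The key step is to read off the structure of this conormal map: it is injective, an isomorphism wherever $x_1\neq 0$, and its cokernel is a length-one skyscraper at $p_i$ spanned by the class of $\overline{x_2}$, i.e. the codirection dual to $T_{p_i}Y$. Equivalently, $N^\vee_{C|\PP^r}|_X$ is the negative modification of $N^\vee_{X|\PP^r}$ toward $(T_{p_i}Y)^{\perp}$; dualizing this short exact sequence and accounting for the twist by $\OO_X(p_i)$ in the definition of the positive modification identifies $N_{C|\PP^r}|_X$ near $p_i$ with $N_{X|\PP^r}[p_i\stackrel{+}{\to}T_{p_i}Y]$. As a consistency check, the cokernel has length $1$ at each node, so $\deg N_{C|\PP^r}|_X=\deg N_{X|\PP^r}+j$, which is exactly the degree increment produced by $j$ positive elementary modifications.

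The main obstacle I expect is bookkeeping rather than ideas: one must verify that the local isomorphisms agree with the canonical identification on the punctured neighborhoods so that they genuinely glue, and that the modification direction is canonically $T_{p_i}Y$, independent of the chosen coordinates, rather than some other line in $N_{X|\PP^r}|_{p_i}$. Keeping the two dualizations straight and correctly matching the twist by $\OO_X(p_i)$ against the definition of $[\,\cdot\stackrel{+}{\to}\cdot\,]$ is where the care is required.
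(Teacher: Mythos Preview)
Your argument is correct. The local computation in coordinates $(x_1,\dots,x_r)$ with $\OI_C=(x_1x_2,x_3,\dots,x_r)$ accurately identifies the conormal map $(\OI_C/\OI_C^2)|_X\to\OI_X/\OI_X^2$ as having cokernel the length-one skyscraper in the $\overline{x_2}$-direction, and the dualization step correctly produces the positive modification toward $T_{p_i}Y$. The gluing is unproblematic since on the punctured neighborhoods the maps are literally the identity.

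As for comparison: the paper does not prove this lemma at all; it is merely quoted from Hartshorne--Hirschowitz \cite{hartshornehirschowitz} as background. Your write-up is essentially the standard local-coordinate verification that underlies that reference, so there is no substantive difference in approach to discuss---you have simply supplied what the paper omits.
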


\subsection{Stability on nodal curves} We will specialize our curves to reducible nodal curves. We briefly recall an extension of the notion of stability developed in \cite[\S 2]{clv22}. Let \(C\) be a connected nodal curve and let $\nu \colon \tilde{C} \to C$  be its normalization.  For any node \(p\) of \(C\), let \(\tilde{p}_1\) and \(\tilde{p}_2\) be the two points of \(\tilde{C}\) over \(p\). Given a vector bundle \(V\) on \(C\), the fibers of the pullback \(\nu^*V\) to \(\tilde{C}\) over \(\tilde{p}_1\) and \(\tilde{p}_2\) are naturally identified.  Given a subbundle \(F \subseteq \nu^*V\), we can thus compare \(F|_{\tilde{p}_1}\) and \(F|_{\tilde{p}_2}\) inside \(\nu^*V|_{\tilde{p}_1} \simeq \nu^*V|_{\tilde{p}_2}\).

\begin{definition}
Let \(V\) be a vector bundle on a connected nodal curve \(C\).  
For a subbundle \(F \subset \nu^*V\),  the {\em adjusted slope \(\mu^{\text{adj}}_C\)} is given by  
\[\mu^{\text{adj}}_C(F) := \mu(F) - \frac{1}{\rk{F}} \sum_{p \in C_{\text{sing}}} \codim_{F} \left(F|_{\tilde{p}_1}\cap F|_{\tilde{p}_2} \right),\]
where \(\codim_F \left(F|_{\tilde{p}_1}\cap F|_{\tilde{p}_2} \right)\) refers to the codimension
of the intersection in either \(F|_{\tilde{p}_1}\) or \(F|_{\tilde{p}_2}\). 
Then \(V\) is {\em (semi)stable} if for all subbundles \(F \subset \nu^*V\),
\[\mu^{\text{adj}}(F) \leqor \mu(\nu^*V) = \mu(V). \]
\end{definition}

\noindent
This notion of semistability specializes well in families of nodal curves. The following two results make our inductive approach to the problem possible.

\begin{proposition} \cite[Proposition 2.3]{clv22} \label{prop:stab-open}
Let \(\mathcal{C} \to \Delta\) be a family of connected nodal curves over
the spectrum of a discrete valuation ring,
and \(\mathcal{V}\) be a vector bundle on \(\mathcal{C}\).
If the special fiber \(\mathcal{V}_0 = \mathcal{V}|_0\) is (semi)stable,
then the general fiber \(\mathcal{V}^* = \mathcal{V}|_{\Delta^*}\) is also (semi)stable.
\end{proposition}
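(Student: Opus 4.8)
The plan is to prove the contrapositive by the classical flat-limit argument for openness of (semi)stability, after translating the adjusted-slope condition into an ordinary slope condition for coherent subsheaves of $\mathcal{V}$ directly on the fibers. The bridge is the observation that the node penalties in $\mu^{\mathrm{adj}}_C$ are engineered so that, for a subbundle $F \subseteq \nu^* V$ of rank $f$ on a nodal curve $C$, the associated saturated subsheaf $\mathcal{G} := V \cap \nu_* F \subseteq V$ (so that $V/\mathcal{G}$ is torsion-free) satisfies
\[\deg \mathcal{G} = \deg F - \sum_{p \in C_{\mathrm{sing}}} \codim_F\!\left(F|_{\tilde p_1}\cap F|_{\tilde p_2}\right), \qquad\text{so}\qquad \mu(\mathcal{G}) = \mu^{\mathrm{adj}}_C(F).\]
Since saturating a subsheaf never decreases its degree, $V$ is (semi)stable in the sense of the definition above precisely when every proper torsion-free quotient $V \twoheadrightarrow Q$ on $C$ has $\mu(Q) \geqor \mu(V)$ --- the usual slope condition, now phrased on $C$ itself with no further reference to the normalization.

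With this reformulation in hand, suppose for contradiction that the general fiber $\mathcal{V}^*$ is not (semi)stable. Then there is a proper torsion-free quotient $\mathcal{V}^* \twoheadrightarrow Q^*$ of some rank $0 < f < \rk\mathcal{V}$ with $\mu(Q^*) < \mu(\mathcal{V})$ (in the semistable case) or $\mu(Q^*) \leq \mu(\mathcal{V})$ (in the stable case); note that $\mu(\mathcal{V})$ is constant over $\Delta$ since $\deg\mathcal{V}$ and the arithmetic genus of the fibers are locally constant in the flat family $\mathcal{C}\to\Delta$. Regarding $Q^*$ as a $\Delta^*$-point of the relative Quot scheme $\mathrm{Quot}(\mathcal{V}/\mathcal{C}/\Delta)$, the valuative criterion of properness over the DVR extends it uniquely to a quotient $\mathcal{V}\twoheadrightarrow\mathcal{Q}$ with $\mathcal{Q}$ flat over $\Delta$. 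Its restriction $\mathcal{V}_0\twoheadrightarrow Q_0$ has the same Hilbert polynomial as $Q^*$, hence $\rk Q_0 = f$ and $\deg Q_0 = \deg Q^*$, so $\mu(Q_0) = \mu(Q^*)$. The limit $Q_0$ may acquire torsion, but passing to its torsion-free quotient $Q_0^{\mathrm{tf}}$ only removes a finite-length subsheaf, so $\rk Q_0^{\mathrm{tf}} = f$ and $\mu(Q_0^{\mathrm{tf}}) \leq \mu(Q_0) = \mu(Q^*)$. Thus $\mathcal{V}_0\twoheadrightarrow Q_0^{\mathrm{tf}}$ is a proper torsion-free quotient whose slope is at most (resp.\ strictly less than) $\mu(\mathcal{V})$, contradicting the (semi)stability of $\mathcal{V}_0$ via the reformulation above.

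The step I expect to demand the most care is the degree identity bridging the two formulations, since this is where the nodal geometry actually enters. It should be verified by a local computation at each node $p$: there $\nu_* F$ has fiber $F|_{\tilde p_1}\oplus F|_{\tilde p_2}$ while $\mathcal{G} = V\cap\nu_* F$ cuts out the diagonal contribution $F|_{\tilde p_1}\cap F|_{\tilde p_2}$, producing a length-$(2f - \dim(F|_{\tilde p_1}\cap F|_{\tilde p_2}))$ correction; combined with $\chi(\nu_* F)=\chi(F)$ and Riemann--Roch on $C$ and $\tilde C$, this yields the displayed formula. Once that identity is established, everything else is the standard proof that (semi)stability is open in families, and the only genuinely nodal inputs are that the flat limit is formed in the relative Quot scheme of $\mathcal{C}/\Delta$ and that torsion appearing in the limit can only decrease the slope of a quotient --- exactly the feature that makes the destabilizing inequality specialize in the correct direction.
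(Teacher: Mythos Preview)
The paper does not give its own proof of this proposition; it is quoted verbatim from \cite[Proposition~2.3]{clv22} and used as a black box. There is therefore nothing in the present paper to compare your argument against.

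That said, your argument is correct and is essentially the proof one finds in the cited source. The key step is exactly the one you isolate: the adjusted slope $\mu^{\mathrm{adj}}_C(F)$ of a subbundle $F\subseteq\nu^*V$ equals the ordinary slope of the saturated subsheaf $V\cap\nu_*F\subseteq V$, so that (semi)stability in the adjusted sense coincides with ordinary slope-(semi)stability for torsion-free quotients on the nodal curve itself. Your local computation of the degree identity is right: the cokernel of $\mathcal{G}\hookrightarrow\nu_*F$ at a node $p$ is $F|_{\tilde p_1}+F|_{\tilde p_2}\subseteq V_p$, of length $2f-\dim(F|_{\tilde p_1}\cap F|_{\tilde p_2})$, and Riemann--Roch on $C$ versus $\tilde C$ absorbs the $f\delta$ discrepancy to give $\deg\mathcal{G}=\deg F-\sum_p\codim_F(F|_{\tilde p_1}\cap F|_{\tilde p_2})$. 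For the converse direction of the equivalence you should note (as you implicitly use) that two saturated subsheaves of $V$ of the same rank, one contained in the other, must coincide, so every saturated $\mathcal{G}\subseteq V$ arises as $V\cap\nu_*F$ for $F$ the saturation of $\nu^*\mathcal{G}$. Once the reformulation is in place, the Quot-scheme limit and the observation that killing torsion in the special-fiber quotient only lowers the slope is the standard openness argument, and it goes through without change whether or not the generic fiber is smooth.
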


\begin{lemma} \cite[Lemma 4.1]{clv22}\label{lem:naive}
Suppose that \(C  = X \cup Y\) is a reducible nodal curve and \(V\) is a vector bundle on \(C\) such that \(V|_X\) and \(V|_Y\) are semistable.
Then \(V\) is semistable.
Furthermore, if one of \(V|_X\) or \(V|_Y\) is stable, then \(V\) is stable.
\end{lemma}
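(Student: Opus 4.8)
The plan is to reduce the statement on the connected nodal curve $C$ to the hypotheses on $X$ and $Y$ by pulling everything back to the normalization and carefully bookkeeping the three kinds of nodes. Write $\nu\colon \tilde C \to C$ for the normalization; since $C = X \cup Y$ is reducible, $\tilde C = \tilde X \sqcup \tilde Y$ is disconnected, and the singular points of $C$ split into three disjoint groups: the nodes internal to $X$, the nodes internal to $Y$, and the nodes $q \in X \cap Y$ where the two components meet. First I would record that, with the degree-over-rank conventions used to define $\mu$ on a possibly disconnected normalization, the comparison slope is additive over components: $\mu(\nu^*V) = \mu(V|_X) + \mu(V|_Y)$, and likewise $\mu(F) = \mu(F_X) + \mu(F_Y)$ for a subbundle, where $F_X = F|_{\tilde X}$ and $F_Y = F|_{\tilde Y}$. (One checks this directly against the normal-bundle slope computation already in \S\ref{sec-prelim}.)

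The key point is that the adjusted slope decomposes compatibly with this splitting. Given a subbundle $F \subset \nu^* V$, I would first note that because $C$ is connected and $\codim_F(F|_{\tilde p_1}\cap F|_{\tilde p_2})$ only makes sense when $\dim F|_{\tilde p_1} = \dim F|_{\tilde p_2}$, the bundle $F$ necessarily has the same rank $k$ on $\tilde X$ and on $\tilde Y$; hence $F_X$ and $F_Y$ are genuine subbundles of $\nu_X^*(V|_X)$ and $\nu_Y^*(V|_Y)$ of rank $k$. At a node internal to $X$ both preimages lie on $\tilde X$, so its codimension contribution is identical to the one appearing in $\mu^{\text{adj}}_X(F_X)$, and similarly for $Y$; meanwhile the nodes of $X \cap Y$ are smooth points of each individual component, so they contribute nothing to $\mu^{\text{adj}}_X$ or $\mu^{\text{adj}}_Y$. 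Assembling these observations yields the identity
$$\mu^{\text{adj}}_C(F) = \mu^{\text{adj}}_X(F_X) + \mu^{\text{adj}}_Y(F_Y) - \frac{1}{k}\sum_{q \in X \cap Y} \codim_F\left(F|_{\tilde q_X} \cap F|_{\tilde q_Y}\right).$$

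To finish, I would use that each codimension in the final sum is nonnegative, so the correction term is $\le 0$ and $\mu^{\text{adj}}_C(F) \le \mu^{\text{adj}}_X(F_X) + \mu^{\text{adj}}_Y(F_Y)$. Applying semistability of $V|_X$ and $V|_Y$ bounds the right-hand side by $\mu(V|_X) + \mu(V|_Y) = \mu(V)$, which proves semistability of $V$. For the stability statement I would restrict to a proper subbundle $F$, so that $0 < k < \rk V$; then $F_X$ is a proper subbundle of $\nu_X^*(V|_X)$, and if $V|_X$ is stable the inequality $\mu^{\text{adj}}_X(F_X) < \mu(V|_X)$ is strict, forcing $\mu^{\text{adj}}_C(F) < \mu(V)$. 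The only delicate points — and the places I would be most careful — are the slope normalization on the disconnected $\tilde C$ and the claim that a subbundle with well-defined adjusted slope has constant rank across the components. These two facts together rule out degenerate cases such as $F_X = 0$ with $F_Y \neq 0$ and make the strict inequality transfer cleanly in the stability case; there is no deeper obstacle, the content being entirely the three-way node accounting above.
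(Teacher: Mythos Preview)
The paper does not give its own proof of this lemma; it simply cites \cite[Lemma~4.1]{clv22}. Your argument is the natural one and is essentially what the cited reference does: decompose the adjusted slope over the two components, observe that the cross-node correction at $X\cap Y$ is nonpositive, and then apply the (semi)stability hypotheses on $V|_X$ and $V|_Y$ componentwise.

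One small comment on the point you yourself flag as delicate. Your justification that $F$ must have constant rank---namely, that the codimension term ``only makes sense'' otherwise---is really an appeal to the convention built into the definition rather than a mathematical deduction. That reading is the intended one (the phrase ``in either $F|_{\tilde p_1}$ or $F|_{\tilde p_2}$'' in the definition presupposes the two codimensions agree, hence equal fiber dimensions at every node, hence constant rank by connectedness of $C$), so your conclusion is correct; but it would be cleaner to state up front that the definition of $\mu^{\mathrm{adj}}$ is only for subbundles of constant rank on $\tilde C$, rather than phrasing it as something forced a posteriori. With that convention in place, the degenerate case $F_X=0$, $F_Y\neq 0$ is indeed excluded, and your additivity identities $\mu(F)=\mu(F_X)+\mu(F_Y)$ and $\mu(V)=\mu(V|_X)+\mu(V|_Y)$ (which hold precisely because the rank is the same on both pieces) go through as written.
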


\subsection{Specializations of BN-curves}\label{degenerationSection}
We will specialize the general BN-curve to unions $C\cup R$, where $C$ is a general BN-curve of smaller degree and/or genus and $R$ is a genus 0 or 1 curve intersecting $C$ in some fixed number of points. In this subsection, we verify that these are BN-curves.  Recall that two curves in $\PP^r$ intersect {\em quasitransversely} if at each  point of intersection they are locally contained in a smooth surface in which they intersect transversely. In particular, the union of two nodal curves intersecting quasitransversely is a nodal curve. 

\begin{lemma}\label{reducibleBNCurves}
Let $C$ be a BN-curve of type $(d,g,r)$. Then the general nodal union of $C$ and a rational normal curve $R$ of degree $e$  intersecting $C$ quasitransversely in $\alpha$ points is BN if one of the following holds:
\begin{enumerate}
\item $\alpha< e+2$, or
\item $e=r$ and $\alpha=r+2$, or 
\item  $e= r-1$, $\alpha=r+1$, $d \geq r+1$ and $\rho(d,g,r)\geq 1$.
\end{enumerate}
\end{lemma}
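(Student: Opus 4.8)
The plan is to separate the statement into a purely numerical check that the invariants of $C\cup R$ satisfy $\rho\ge 0$, and a geometric argument that the nodal curve $C\cup R$ actually lies in the closure of the Brill--Noether component. For the numerics, since $R$ is rational and meets $C$ quasitransversely in $\alpha$ nodes, the union is connected and nodal of degree $D=d+e$ and arithmetic genus $G=g+\alpha-1$. A direct substitution gives
\[
\rho(d+e,\, g+\alpha-1,\, r)=\rho(d,g,r)+(r+1)e-r(\alpha-1),
\]
and I would check the three cases against this identity. In case (1) the hypothesis $\alpha\le e+1$ forces the correction term $(r+1)e-r(\alpha-1)$ to be at least $e\ge 1$, so $\rho(D,G,r)\ge\rho(d,g,r)+e\ge 1$; in case (2) the correction term is $(r+1)r-r(r+1)=0$, so $\rho$ is preserved exactly; and in case (3) it equals $(r+1)(r-1)-r^2=-1$, so $\rho(D,G,r)=\rho(d,g,r)-1$, which is nonnegative precisely by the hypothesis $\rho(d,g,r)\ge 1$. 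This is exactly why the attachment numbers are calibrated as they are.

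The substantive part is to show that the general such $C\cup R$ is smoothable to a nondegenerate curve whose smoothings dominate $\overline{M}_G$; since $r\ge 3$, the unique component of the Hilbert scheme dominating $\overline{M}_G$ \cite{Eishar} must then be the one containing $[C\cup R]$, making it a BN-curve. I would proceed by a dimension count combined with a smoothing result for nodal curves. Using the Hartshorne--Hirschowitz description of $N_{C\cup R}|_C$ and $N_{C\cup R}|_R$ as positive elementary modifications at the $\alpha$ nodes, one controls the deformation theory of $C\cup R$: the family obtained by letting $C$ vary in its $(d,g,r)$ Brill--Noether component, choosing the $\alpha$ attachment points, and choosing a rational normal curve $R$ of degree $e$ through them should sweep out a locus of codimension $\alpha$ (the number of nodes) inside the space of nondegenerate degree-$D$, genus-$G$ curves, matching the expected dimension $(r+1)D-(r-3)(G-1)$ of the BN-component minus $\alpha$. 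Smoothing the $\alpha$ nodes then adds back $\alpha$ dimensions and fills out the BN-component, and because $C$ is general in $\overline{M}_g$ while the attachment data varies, the smoothed curves have general moduli. Hence they are BN-curves, and so is the limit $C\cup R$.

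The main obstacle is precisely this smoothing/regeneration step: verifying that $C\cup R$ deforms into the BN-component rather than being obstructed or landing in an auxiliary component. The delicate cases are (2) and (3), where $\alpha=e+2$ is the largest allowed number of attachment points, so that a degree-$e$ rational normal curve is not determined by the $\alpha$ points (one needs $e+3$) and one must check that a suitable $R$ meeting $C$ in exactly these points exists and moves in the expected family. In case (3) this additionally forces the $r+1$ attachment points to lie on the hyperplane spanned by $R$, which is why the hypotheses $d\ge r+1$ and $\rho(d,g,r)\ge 1$ are imposed: they guarantee enough points of $C$ on that hyperplane and enough room in moduli for the construction to stay unobstructed. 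I expect the cleanest route to the smoothing is to invoke an existing regeneration result for unions of BN-curves with rational normal curves, as developed in \cite{lv22, aly}, after checking that its numerical hypotheses reduce to the three displayed cases.
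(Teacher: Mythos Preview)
Your numerical verification that $\rho(D,G,r)\ge 0$ is correct but ultimately unnecessary for the argument. The paper takes a much more direct route: citing \cite[Lemma~2.10]{clv22}, it reduces the entire question to the single vanishing $H^1(C\cup R,\, T\PP^r|_{C\cup R})=0$. This vanishing simultaneously guarantees that the Hilbert scheme is smooth at $[C\cup R]$ and that the forgetful map to $\overline{M}_G$ is smooth there, so $[C\cup R]$ lies on the unique component dominating moduli---no separate $\rho$ check, dimension count, or regeneration argument is needed. The paper then verifies the vanishing from the restriction sequence
\[
0 \to T\PP^r|_R(-R\cap C) \to T\PP^r|_{C\cup R} \to T\PP^r|_C \to 0,
\]
using Gieseker--Petri on $C$ for $H^1(T\PP^r|_C)=0$ and the explicit splitting $T\PP^r|_R \cong \OO(e+1)^{e}\oplus\OO(e)^{r-e}$ to kill $H^1(T\PP^r|_R(-\alpha))$ in cases (1) and (2). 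Case (3) is cited directly from \cite[Lemma~5.7]{lv22}.

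Your smoothing-plus-dimension-count plan is essentially this same deformation-theoretic argument unpacked by hand, but you correctly identify the regeneration step as the obstacle and then defer it to the literature. The point is that the $H^1$ criterion \emph{is} the regeneration result you are reaching for: it packages smoothability, unobstructedness, and domination of moduli into one cohomological check. What you are missing is this reduction; once you have it, cases (1) and (2) become a two-line computation on $R$, and the elaborate tracking of attachment points, hyperplane conditions, and moduli dimensions in your sketch is unnecessary.
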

\begin{proof}
As in \cite[Lemma 2.10]{clv22}, by basic deformation theory, it suffices to show $H^1(T\PP^r\vert_{C\cup R})=0$. Consider the exact sequence
\[
0\ra T\PP^r\vert_R(-R\cap C)\ra T\PP^r\vert_{C\cup R}\ra T\PP^r\vert_C\ra 0.
\]
Since $C$ is a general BN-curve, by the Gieseker-Petri Theorem $H^1(C,T\PP^r\vert_C)=0$. Moreover, $T\PP^r\vert_R\cong\OO(e+1)^{e} \oplus \OO(e)^{r-e}$, so if $\al<e+2$ or $e=r$ and $\al=r+2$, we have $H^1(R,T\PP^r\vert_R(-\alpha))=0$. Hence, Cases (1) and (2) hold.

Case (3) is  \cite[Lemma 5.7]{lv22}.
\end{proof}
\begin{proposition}\label{230}
Let $C_1,C_2\subset \PP^r$ be general genus 1 BN-curves that  intersect quasitransversely in a collection of fewer than $\max(\deg(C_1),\deg(C_2))$ nodes $\Gamma$. Then $C=C_1\cup C_2$ is a BN-curve.
\end{proposition}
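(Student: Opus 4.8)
\textbf{The plan} is to prove that $C = C_1 \cup C_2$ is a BN-curve by the same deformation-theoretic criterion used in Lemma \ref{reducibleBNCurves}, namely by verifying that $H^1(T\PP^r\vert_{C_1 \cup C_2}) = 0$. As before, this vanishing implies that the Hilbert scheme is smooth of the expected dimension at $[C]$, and since the two genus $1$ pieces glue to a connected nodal curve of genus $g = g(C_1) + g(C_2) + (\#\Gamma - 1) = \#\Gamma + 1$ and degree $d = \deg(C_1) + \deg(C_2)$, one checks that the resulting curve lies on the distinguished component dominating $\overline{M}_g$ and hence is BN. So the entire content is the cohomology vanishing.

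To establish $H^1(T\PP^r\vert_{C_1 \cup C_2}) = 0$, I would run the same exact sequence as in Lemma \ref{reducibleBNCurves}, but now symmetrized so as to exploit the freedom to choose which curve plays the role of $R$. Concretely, assume without loss of generality $\deg(C_2) \geq \deg(C_1)$ and consider
\[
0 \ra T\PP^r\vert_{C_1}(-C_1\cap C_2) \ra T\PP^r\vert_{C_1\cup C_2} \ra T\PP^r\vert_{C_2} \ra 0.
\]
Since $C_2$ is a general genus $1$ BN-curve, the Gieseker–Petri theorem gives $H^1(C_2, T\PP^r\vert_{C_2}) = 0$, so it suffices to show $H^1(C_1, T\PP^r\vert_{C_1}(-\Gamma)) = 0$. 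The restriction $T\PP^r\vert_{C_1}$ fits in $0 \to TC_1 \to T\PP^r\vert_{C_1} \to N_{C_1\vert\PP^r} \to 0$, and on a genus $1$ curve $TC_1 = \OO_{C_1}$, so twisting by $-\Gamma$ and using that $\#\Gamma \geq 1$ kills $H^1(TC_1(-\Gamma)) = H^1(\OO_{C_1}(-\Gamma))$. Thus the problem reduces to the $H^1$-vanishing of the twisted normal bundle $N_{C_1\vert\PP^r}(-\Gamma)$, which is a degree and slope computation: $N_{C_1\vert\PP^r}$ has slope $\deg(C_1) + \frac{2\deg(C_1)}{r-1}$, and after subtracting $\#\Gamma$ one wants the twisted slope to stay above the $H^1$-vanishing threshold.

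The key arithmetic input is the hypothesis $\#\Gamma < \max(\deg(C_1), \deg(C_2)) = \deg(C_2)$, together with the freedom to take $C_1$ to be the lower-degree curve. The point is that a general elliptic BN-curve of degree $e$ in $\PP^r$ has a normal bundle that is itself semistable (by the elliptic case proved in \S \ref{sec-elliptic}), so $N_{C_1\vert\PP^r}(-\Gamma)$ has no higher cohomology as soon as each semistable summand has nonnegative slope after twisting; since $\mu(N_{C_1\vert\PP^r}) \geq \deg(C_1)$ and $\#\Gamma < \deg(C_2)$, one can arrange the twist to remain in the $H^1 = 0$ range provided the degrees of $C_1$ and $C_2$ are comparable. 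The genuinely delicate point, and where I expect the main obstacle, is that $\#\Gamma$ is bounded only by $\max(\deg(C_1),\deg(C_2))$ rather than $\min(\deg(C_1),\deg(C_2))$, so if $C_1$ is much smaller than $C_2$ the twist $N_{C_1\vert\PP^r}(-\Gamma)$ could have negative slope and the naive vanishing fails. I would resolve this by choosing the splitting of the exact sequence adaptively and, if necessary, further degenerating or redistributing the intersection points $\Gamma$ between the two components, or by appealing to the semistability of $N_{C_1\vert\PP^r}$ together with a more careful bound showing that $H^1$ of each summand still vanishes because $\#\Gamma \leq \deg(C_2)$ forces the elliptic curve $C_1$ to be sufficiently large relative to $\Gamma$ precisely in the remaining cases.
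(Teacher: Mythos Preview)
Your overall strategy---reduce to $H^1(T\PP^r|_{C_1\cup C_2})=0$ via the restriction/twist sequence---is the same as the paper's, but the execution has two genuine gaps.

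First, you twist the \emph{wrong} component. You set $\deg(C_2)\geq\deg(C_1)$ and then try to prove $H^1(T\PP^r|_{C_1}(-\Gamma))=0$ for the \emph{smaller}-degree curve $C_1$. Since the hypothesis only gives $\#\Gamma<\max(\deg(C_1),\deg(C_2))=\deg(C_2)$, you have no control of $\#\Gamma$ relative to $\deg(C_1)$, and your final paragraph correctly diagnoses this as a real obstruction---but the ``fixes'' you propose (redistributing points, further degenerating) are not arguments. The paper simply labels the curves the other way: it twists the \emph{larger}-degree curve, so that $\#\Gamma<\deg(C_1)$ holds by hypothesis.

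Second, your normal-bundle sequence computation contains an error: on a genus $1$ curve, $H^1(\OO_{C_1}(-\Gamma))$ is Serre dual to $H^0(\OO_{C_1}(\Gamma))$, which has dimension $\#\Gamma\geq 1$, so it is \emph{not} zero. Thus even with the correct labeling, your reduction to $H^1(N_{C_1|\PP^r}(-\Gamma))=0$ does not suffice; you would also need surjectivity of the connecting map $H^0(N_{C_1|\PP^r}(-\Gamma))\to H^1(\OO_{C_1}(-\Gamma))$, which you have not addressed.

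The paper avoids both issues cleanly: with $C_1$ the larger curve, Serre duality on the genus $1$ curve (where $K_{C_1}\cong\OO_{C_1}$) gives $H^1(T\PP^r|_{C_1}(-\Gamma))\cong H^0(\Omega\PP^r|_{C_1}(\Gamma))^*$, and the restricted Euler sequence embeds $\Omega\PP^r|_{C_1}(\Gamma)$ into $\OO_{C_1}(-H+\Gamma)^{r+1}$, which has no sections since $\deg(-H+\Gamma)=\#\Gamma-\deg(C_1)<0$. No semistability of normal bundles is needed.
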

\begin{proof}
By deformation theory, we need to show $H^1(C, T\PP^r|_C)=0$. Without loss of generality, assume  $\deg(C_1) \geq \deg(C_2)$. 
 We have the exact sequence
\[
0\ra T\PP^r\vert_{C_1}(-\Gamma)\ra T\PP^r\vert_{C} \ra T\PP^r\vert_{C_2}\ra 0.
\]
Since $C_2$ is a general BN-curve, $H^1(C_2, T\PP^r\vert_{C_2})=0$.  By Serre duality,  $$H^1(C_1, T\PP^r\vert_{C_1}(-\Gamma))\cong H^0(C_1, \Omega\PP^r\vert_{C_1}(\Gamma))^*,$$ where $\Omega \PP^r$ denotes the cotangent bundle of $\PP^r$. By restricting the Euler sequence, we have an exact sequence
\[
0\ra \Omega\PP^r\vert_{C_1}\ra \OO_{C_1}(-H)^{r+1}\ra \OO_{C_1}\ra 0.
\]
so since $\Gamma\subset C_1$ has degree less than $\deg(C_1)$, we have that $\Omega \PP^r\vert_{C_1}(\Gamma)$ is a subsheaf of a bundle $\OO_{C_1}(-H+\Gamma)^{r+1}$ with no global sections, so $H^1(C_1, T\PP^r\vert_{C_1}(-\Gamma))=0$. We conclude $H^1(C, T\PP^r|_C)=0$ and the proposition follows.
\end{proof}

The proof in fact shows the following.

\begin{corollary}\label{cor-g1bn}
Let $C_1$ be a general genus 1 BN-curve and let $C_2$ be a BN curve with $H^1(C_2, T\PP^r|C_2)=0$ intersecting $C_1$ quasitransversely at fewer than $\deg(C_1)$ points. Then $C_1 \cup C_2$ is a BN-curve.
\end{corollary}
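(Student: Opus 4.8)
The statement is an immediate consequence of the argument already given for Proposition~\ref{230}; the point is simply to isolate which properties of the two components were actually used. The plan is as follows. As in the previous proofs, by deformation theory it suffices to establish that $H^1(C, T\PP^r|_C) = 0$, where $C = C_1 \cup C_2$. I would use the same restriction sequence as before, but with the genus $1$ curve $C_1$ placed in the kernel slot:
\[
0 \ra T\PP^r|_{C_1}(-\Gamma) \ra T\PP^r|_C \ra T\PP^r|_{C_2} \ra 0,
\]
where $\Gamma = C_1 \cap C_2$. The long exact sequence in cohomology then reduces the vanishing of $H^1(C, T\PP^r|_C)$ to the vanishing of the two outer terms $H^1(C_2, T\PP^r|_{C_2})$ and $H^1(C_1, T\PP^r|_{C_1}(-\Gamma))$.

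For the first term, the hypothesis $H^1(C_2, T\PP^r|_{C_2}) = 0$ is assumed outright, so nothing further is needed; this is exactly where the generality of $C_2$ in Proposition~\ref{230} gets replaced by an explicit cohomological hypothesis. For the second term I would run the identical Serre duality computation used for Proposition~\ref{230}. Since $C_1$ has genus $1$, its dualizing sheaf $\omega_{C_1}$ is trivial, so Serre duality gives $H^1(C_1, T\PP^r|_{C_1}(-\Gamma)) \cong H^0(C_1, \Omega\PP^r|_{C_1}(\Gamma))^*$. Twisting the restricted Euler sequence by $\OO_{C_1}(\Gamma)$ exhibits $\Omega\PP^r|_{C_1}(\Gamma)$ as a subsheaf of $\OO_{C_1}(-H+\Gamma)^{r+1}$. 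Because $\deg \Gamma < \deg(C_1) = \deg \OO_{C_1}(H)$, the line bundle $\OO_{C_1}(-H+\Gamma)$ has negative degree and hence no global sections, so the subsheaf has none either and the second term vanishes.

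Combining the two vanishings, the long exact sequence yields $H^1(C, T\PP^r|_C) = 0$, and the corollary follows. There is essentially no genuinely new difficulty here: the only content beyond Proposition~\ref{230} is the observation that the generality of $C_2$ was used solely through the vanishing $H^1(C_2, T\PP^r|_{C_2}) = 0$, and that the genus $1$ hypothesis on $C_1$ enters only to trivialize $\omega_{C_1}$ in the Serre duality step. The main thing to verify carefully is the alignment of hypotheses with the two terms of the sequence: one must place $C_1$ (not $C_2$) in the twisted slot, since the condition ``fewer than $\deg(C_1)$ points'' is precisely what forces $\OO_{C_1}(-H + \Gamma)$ to have negative degree and thereby makes the argument on the $C_1$ side go through.
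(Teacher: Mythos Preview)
Your proposal is correct and is exactly the argument the paper intends: the paper does not give a separate proof of this corollary, but simply remarks that ``the proof in fact shows the following,'' meaning precisely the observation you spell out---that the generality of $C_2$ in Proposition~\ref{230} was used only through the vanishing $H^1(C_2, T\PP^r|_{C_2})=0$, while the argument on the $C_1$ side goes through verbatim using genus $1$ and $|\Gamma|<\deg(C_1)$.
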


\subsection{Interpolation and stability} In this subsection, we recall Larson and Vogt's result on interpolation and discuss the relation between interpolation and semistability of the normal bundle. 

\begin{definition}
A vector bundle $E$ on a curve $C$ \emph{satisfies interpolation} if $H^1(C,E)=0$, and, for all $e>0$, there exists an effective divisor $D$ of degree $e$ on $C$ such that $H^0(C,E(-D))=0$ or $H^1(C,E(-D))=0$.
\end{definition}

Larson and Vogt prove the following theorem. 

\begin{theorem}\cite[Theorem 1.2]{lv22}\label{lvMain} The normal bundle of a general BN-curve of degree $d$ and genus $g\geq 1$ satisfies interpolation except when 
$$(d,g,r) \in \{(5,2,3), (6,4,3), (6,2,4), (7,2,5), (10, 6, 5)\}$$
\end{theorem}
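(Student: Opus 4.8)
The plan is to prove interpolation by induction on the genus and degree, degenerating a general BN-curve to a nodal union of a smaller BN-curve and a rational (or elliptic) building block and transferring the required vanishing across the normal-bundle modification exact sequence. First I would unwind the definition. Since
$\chi\big(N_{C|\PP^r}(-D)\big) = \deg N_{C|\PP^r} + (r-1)(1-g) - (r-1)e$
is strictly decreasing in $e = \deg D$, the condition ``$H^0 = 0$ or $H^1 = 0$'' is automatic once $e$ is far from the balance point $e_0$ where $\chi$ changes sign: this uses $H^1(N_{C|\PP^r}) = 0$, which follows for a general BN-curve from $H^1(T\PP^r|_C)=0$ (as in the proof of Lemma \ref{reducibleBNCurves}) via the normal bundle sequence, together with upper-semicontinuity. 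The real content is concentrated at the one or two critical values of $e$ near $e_0$. Moreover, since $h^1$ is upper-semicontinuous in families of nodal curves (the cohomological analog of Proposition \ref{prop:stab-open}), it suffices to exhibit, for each critical $e$, a single reducible BN-degeneration $C_0$ and one effective divisor $D_0$ of degree $e$ on $C_0$ for which $N_{C_0|\PP^r}(-D_0)$ has vanishing $H^0$ or vanishing $H^1$.

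For the induction I would order triples lexicographically by $(g,d)$, taking as base cases the genus-one curves and, in each genus, the curves of smallest degree. The genus-one case is handled separately by descending induction on $r$ together with the elliptic-normal-curve computation, exactly as in the elliptic section of this paper; the lowest-degree rational-type pieces have normal bundles that are direct sums of line bundles on $\PP^1$, for which interpolation is just an explicit inequality among the twisted degrees.

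In the inductive step I would specialize $C$ to $C_0 = C' \cup R$, where $C'$ is a BN-curve of smaller degree or genus and $R$ is a rational normal curve of degree $e \in \{r-1, r\}$ (to lower the degree), or a general genus-one BN-curve (to lower the genus), meeting $C'$ quasitransversely in $\alpha$ points, chosen so that $C_0$ is BN by Lemma \ref{reducibleBNCurves} (respectively Corollary \ref{cor-g1bn} and Proposition \ref{230}). By the Hartshorne--Hirschowitz modification formula, $N_{C_0|\PP^r}|_{C'} \cong N_{C'|\PP^r}[\Gamma \stackrel{+}{\to} R]$ and $N_{C_0|\PP^r}|_R \cong N_{R|\PP^r}[\Gamma \stackrel{+}{\to} C']$, where $\Gamma = C' \cap R$. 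Splitting a general divisor $D_0$ of degree $e$ as $D_0 = D' + D_R$ and using the restriction sequence
\[
0 \to \big(N_{C_0|\PP^r}|_{C'}\big)(-\Gamma)(-D') \to N_{C_0|\PP^r}(-D_0) \to \big(N_{C_0|\PP^r}|_R\big)(-D_R) \to 0,
\]
I would reduce the vanishing on $C_0$ to vanishing on the two components. Using $V[\Gamma \stackrel{+}{\to} W](-\Gamma) = V[\Gamma \stackrel{-}{\to} W]$, the $C'$-term is a negative modification of $N_{C'|\PP^r}$ twisted down, whose interpolation follows from the inductive hypothesis for $C'$ once the number of modification points is balanced against the drop in degree; the $R$-term is a direct sum of line bundles on $\PP^1$ with explicit degrees, so its cohomology is computed directly.

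The main obstacle is the numerical bookkeeping at the balance point $e_0$. To conclude $H^1\big(N_{C_0|\PP^r}(-D_0)\big) = 0$ from the restriction sequence one needs $H^1$ to vanish on both outer terms for a common $e$ (and dually for $H^0$), which forces a simultaneous choice of the degree and secancy of $R$ and of the split $e = e' + e_R$ so that the Euler characteristics of the two restricted-and-modified pieces sit on the same side of zero. Near $e_0$ the two pieces can be pushed onto opposite sides, in which case no split works directly and one must instead shift a single point between components or change the attaching curve; tracking how the positive modifications redistribute degree among the line-bundle summands of the $\PP^1$-piece, keeping each summand balanced enough to avoid a stray section or obstruction, is the delicate part. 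The five exceptional triples $(5,2,3),(6,4,3),(6,2,4),(7,2,5),(10,6,5)$ are precisely where no admissible balancing exists because the normal bundle is genuinely special — these are the unstable or strictly semistable canonical and hyperelliptic examples of the introduction — so for them one checks directly that interpolation fails, and they must be excluded rather than proved.
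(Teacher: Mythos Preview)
This theorem is not proved in the paper at all: it is quoted verbatim as \cite[Theorem 1.2]{lv22} and attributed to Larson and Vogt, with the surrounding text ``Larson and Vogt prove the following theorem.'' The present paper uses it only as a black box (via Corollary~\ref{interpolation}) and provides no argument for it, so there is no ``paper's own proof'' to compare your proposal against.

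As for your sketch on its own terms: the broad architecture --- degenerate to a nodal BN-curve, use the Hartshorne--Hirschowitz modification description of the restricted normal bundle, and push the critical cohomology vanishing through a restriction sequence --- is indeed the genre of argument Larson and Vogt use. But your outline vastly understates what is required. The actual proof in \cite{lv22} runs to roughly ninety pages, develops an elaborate machinery of multiple simultaneous degenerations and pointing-bundle projections, and requires a large case analysis to close the induction; the ``numerical bookkeeping at the balance point'' you flag as the main obstacle is in fact the whole difficulty, and your proposal gives no mechanism for resolving it beyond saying one should ``shift a single point between components or change the attaching curve.'' In particular, the base cases are not as simple as you suggest (the genus-one interpolation statement is itself nontrivial and is not what the elliptic section of this paper proves --- that section proves \emph{semistability}, not interpolation), and the inductive step does not reduce to a single restriction sequence with two easy outer terms. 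So while the proposal is not wrong in spirit, it is far from a proof and should be regarded as a heuristic outline rather than an argument.
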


They also observe that interpolation implies semistability when the rank of the normal bundle divides the degree. We recall the argument for the reader's convenience. 

\begin{corollary}\label{interpolation}\cite[Remark 1.6]{vogt}
Let $(d,g,r)$ be a BN-triple such that $r-1$ divides $2d+2g-2$. Assume $$(d,g,r) \notin \{(5,2,3), (6,4,3), (7,2,5)\}.$$
Then for the general BN-curve $C$ of type $(d,g,r)$, $N_{C|\PP^r}$ is semistable. 
\end{corollary}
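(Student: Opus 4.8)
The plan is to combine the interpolation theorem of Larson and Vogt (Theorem \ref{lvMain}) with a standard twist-and-slope argument, the point being that interpolation controls the cohomology of $N_{C|\PP^r}$ precisely when its rank divides its degree. First I would observe that the divisibility hypothesis $r-1 \mid 2d+2g-2$ singles out exactly the three excluded triples among the exceptions of Theorem \ref{lvMain}: of the five exceptional triples $(5,2,3),(6,4,3),(6,2,4),(7,2,5),(10,6,5)$, only $(5,2,3),(6,4,3),(7,2,5)$ satisfy $r-1 \mid 2d+2g-2$ (for $(6,2,4)$ one has $14/3$ and for $(10,6,5)$ one has $30/4$). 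Hence any $(d,g,r)$ meeting the hypotheses of the corollary lies outside the exceptional set of Theorem \ref{lvMain}, so the general BN-curve $C$ has $N := N_{C|\PP^r}$ satisfying interpolation; in particular $H^1(C,N)=0$.

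Next I would use the divisibility to make the slope integral. Since $\deg N = d(r+1)+2g-2$ and $\rk N = r-1$, the hypothesis $r-1 \mid 2(d+g-1)$ says exactly that $\mu(N) = d + \frac{2(d+g-1)}{r-1}$ is an integer $m$, i.e. $\rk N$ divides $\deg N$. From $H^1(N)=0$ and Riemann--Roch, $\chi(N)=(r-1)(m+1-g)=h^0(N)\geq 0$, so $m\geq g-1$ and the integer $e := m+1-g$ is nonnegative. The crux of the proof is to produce a divisor $D$ of degree $e$ for which $N(-D)$ is acyclic. A direct computation gives $\chi(N(-D)) = (r-1)(m-e+1-g) = 0$. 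If $e>0$, interpolation supplies an effective $D$ of degree $e$ with $h^0(N(-D))=0$ or $h^1(N(-D))=0$, and since $\chi(N(-D))=0$ forces $h^0=h^1$, both vanish. If $e=0$ (so $m=g-1$), I take $D=0$ and combine $H^1(N)=0$ with $\chi(N)=0$ to get $H^0(N)=0$. Either way there is a degree-$e$ divisor $D$ with $h^0(N(-D))=0$.

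Finally I would derive the contradiction. Suppose $N$ is not semistable, so some proper subbundle $F\subset N$ has $\mu(F)>\mu(N)=m$. Then $\mu(F(-D)) = \mu(F)-e > m-e = g-1$, whence $\chi(F(-D)) = \rk(F)\big(\mu(F(-D))+1-g\big) > 0$ and so $h^0(F(-D))>0$. As $F(-D)$ is a subsheaf of $N(-D)$, this forces $h^0(N(-D))>0$, contradicting the previous step; therefore $N$ is semistable. I do not expect any serious obstacle here: the deep input is entirely contained in Theorem \ref{lvMain}, and the rest is mechanical once one identifies $e = \mu(N)-g+1$ as the unique twist making $\chi(N(-D))$ vanish. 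The only genuine care needed is the bookkeeping for the boundary case $e=0$ (where no positive divisor is available and one must invoke $H^1(N)=0$ directly) and the initial verification that the excluded triples are exactly the divisibility-compatible exceptions of Theorem \ref{lvMain}.
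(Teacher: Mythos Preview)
Your proof is correct and follows essentially the same approach as the paper: both twist $N$ by a divisor of degree $\mu(N)-g+1$ so that $\chi$ vanishes, use interpolation to kill $h^0$ and $h^1$, and then compare slopes via Riemann--Roch on a would-be destabilizing subbundle. Your write-up is in fact a bit more careful than the paper's, explicitly checking which of the five exceptional triples satisfy the divisibility hypothesis and treating the (vacuous) boundary case $e=0$; the paper relegates the former to a remark and ignores the latter.
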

\begin{proof}
If $r-1$ divides $2d+2g-2$, then $\mu(N_{C\vert \PP^{r}})$ is an integer. Let $D$ be a divisor on $C$ of degree $\mu(N_{C\vert \PP^{r}})-g+1$ such that $H^0(C,N_{C\vert \PP^{r}}(-D))=0$ or $H^1(C,N_{C\vert \PP^{r}})=0$. Since $\chi(C,N_{C\vert \PP^{r}})=0$, we must in fact have 
\[
H^0(C,N_{C\vert \PP^{r}}(-D))=H^1(C,N_{C\vert \PP^{r}}(-D))=0.
\]
Let $S$ be a nonzero subbundle of $N_{C|\PP^r}$. Then we have $H^0(S(-D))=0$, so $\chi(S(-D))=(r-1)(\mu(S)-\mu(N_{C\vert \PP^r}))$ is nonpositive and $\mu(S)\leq \mu(N_{C\vert \PP^r})$.
\end{proof}

\begin{remark}
 The exceptional cases $(6,2,4)$ and $(10,6,5)$ do not appear in the corollary because $r-1$ does not divide $2d+2g-2$.  
\end{remark}

\subsection{Pointing bundles} Let $q \in \PP^r$ be a point.  In this subsection, we recall the definition of the pointing bundle $N_{C \to q}$. We refer the reader to \cite[\S 5 and 6]{aly} for  more details. Let $U_{C,q} := \{ p \in C | T_p C \cap q = \emptyset\}$. Let $\pi_q: C \to \PP^{r-1}$ denote the projection from $q$ and observe that $\pi_q$ is unramified on $U_{C,q}$. If $U_{C,q}$ is dense in $C$ and contains the singular locus of $C$, then $N_{C\to q}$ is the unique extension of the kernel of the natural map $N_{C|\PP^r}|_{U_{C,q}} \to N_{\pi_q}|_{U_{C,q}},$ where $N_{\pi_q}$ denotes the normal sheaf of $\pi_q$. In this case, projection from $q$ induces the exact sequence
$$0 \to N_{C\to q} \to N_{C|\PP^r} \to \pi_q^* N_{\pi_q} (C \cap q) \to 0.$$ We will use pointing bundles only in the simplest two cases:
\begin{enumerate}
    \item If $q$ is a general point and $U_{C, q} = C$, then $N_{C\to q} \cong \OO_C(1)$ \cite[Proposition 6.2]{aly}.
    \item If $q$ is a general point of $C$ and $U_{C, q} = C \setminus \{q\}$, then $N_{C\to q} \cong \OO_C(1)(2q)$ \cite[Proposition 6.3]{aly}.
\end{enumerate}
By convention, we set $N_{C|\PP^r}[p \posmod q]$ to be $N_{C|\PP^r}[p \posmod N_{C \to q}]$.

\section{Normal bundles of curves of genus 1}\label{sec-elliptic}
In this section, we prove that  the normal bundle of a general  genus $1$ BN-curve in $\PP^r$ is semistable.

\begin{theorem}\label{genus1}
Let $C\subset \PP^r$ be a general BN-curve of degree $d\geq r+1 \geq 4$ and genus $g=1$. Then $N_{C\vert \PP^r}$ is semistable.
\end{theorem}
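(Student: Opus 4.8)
The plan is to prove the statement by descending induction on $r$ with the degree $d$ held fixed, taking as the base case the elliptic normal curve in $\PP^{d-1}$, whose normal bundle is semistable by the theorem of Ein and Lazarsfeld \cite{einlazarsfeld}. Before running the induction I would dispatch every case in which $r-1$ divides $2d$: a general genus $1$ curve satisfies interpolation by Theorem \ref{lvMain} (none of the exceptional triples have $g=1$), so Corollary \ref{interpolation} already gives semistability of $N_{C|\PP^r}$ whenever $r-1 \mid 2d = 2d+2g-2$. These integer-slope cases serve both as a direct source of semistable examples and as extra anchors for the induction; the real content is to propagate semistability across the divisibility gaps, which is where projection enters.

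For the inductive step, assume the result for general genus $1$ degree $d$ curves in $\PP^{r+1}$ and let $C \subset \PP^{r+1}$ be such a curve, so $N_{C|\PP^{r+1}}$ is semistable. Since the general genus $1$ degree $d$ curve in $\PP^r$ is the image of the elliptic normal curve under projection from a general linear center, I would project $C$ from a general point $q \notin C$: the projection $\pi_q$ is unramified, its image $C' \subset \PP^r$ is smooth of degree $d$ and genus $1$, and I would check that $C'$ is again a general BN-curve. Because $q \notin C$, we have $N_{C \to q} \cong \OO_C(1)$ by \cite[Proposition 6.2]{aly}, and the pointing-bundle sequence reads
\[
0 \to \OO_C(1) \to N_{C|\PP^{r+1}} \to N_{C'|\PP^r} \to 0,
\]
exhibiting $N_{C'|\PP^r}$ as a quotient of the semistable bundle $N_{C|\PP^{r+1}}$. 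The relevant slopes are $\mu(\OO_C(1)) = d$, $\mu(N_{C|\PP^{r+1}}) = d + \tfrac{2d}{r}$ and $\mu(N_{C'|\PP^r}) = d + \tfrac{2d}{r-1}$, so the sub-line-bundle lies strictly below and the quotient strictly above the middle slope.

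To establish semistability of the quotient I would argue by contradiction. Let $F \subset N_{C'|\PP^r}$ be the maximal destabilizing subbundle, with $\mu(F) > \mu(N_{C'|\PP^r})$, and let $\hat F \subset N_{C|\PP^{r+1}}$ be its preimage, fitting in $0 \to \OO_C(1) \to \hat F \to F \to 0$. Semistability upstairs forces $\mu(\hat F) \le \mu(N_{C|\PP^{r+1}})$, and a short computation then bounds $\mu(F) \le \mu(N_{C|\PP^{r+1}}) + \tfrac{2d}{r\,\rk F}$. Comparing with $d + \tfrac{2d}{r-1}$, this yields a contradiction only when $\rk F \ge r-1$, that is when $F$ is all of $N_{C'|\PP^r}$; the low-rank destabilizing subbundles survive this crude estimate, so the naive ``quotient of a semistable bundle'' argument does not close.

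Closing this gap is the main obstacle, and the extra leverage should come from the genericity of the projection center $q$. The maximal destabilizing subbundle $F$ is canonical, so as $q$ varies its preimage $\hat F_q \subset N_{C|\PP^{r+1}}$ always contains the moving sub-line-bundle $N_{C \to q} \cong \OO_C(1)$; a low-rank destabilizing $F$ corresponds to a subbundle that lifts \emph{isomorphically} through a general projection, and a general $q$ should be incompatible with this. I would make this precise either by a dimension count showing that the line subbundles $\{N_{C\to q}\}_q$ are not all contained in any proper subbundle of $N_{C|\PP^{r+1}}$, or by an $H^1$-vanishing/interpolation argument for the appropriate twist of $N_{C'|\PP^r}$ that directly excludes a subsheaf of the forbidden slope. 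This transverse subbundle control, rather than any of the slope bookkeeping, is where I expect the genuine difficulty to lie.
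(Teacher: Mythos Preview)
Your framework matches the paper's exactly: fix $d$, descend on $r$ starting from the elliptic normal curve (Ein--Lazarsfeld), and compare the normal bundles under projection from a general point via the pointing-bundle sequence. You also correctly identify that the naive slope bookkeeping on the preimage of a destabilizing subbundle does not close, and that the genericity of the projection center must be the extra input. But your proposed remedies (a dimension count on the line subbundles $N_{C\to q}$, or an unspecified $H^1$-vanishing) are not what does the job; the paper's argument uses a genuinely different idea that you have not hit on.

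The missing ingredient is arithmetic on the elliptic curve. Working with a destabilizing \emph{quotient} $Q$ of minimal slope, the paper observes that as the projection center varies over a Grassmannian, $\det(Q)\in\Pic(C)$ must stay constant (there are no nonconstant maps from a Grassmannian to an abelian variety), so $\det(Q)$ depends only on the pair $(C,|H|)$. Since translation by any $d$-torsion point of $C$ preserves $|H|$, it must also fix $\det(Q)$; this forces $d\mid\deg(Q)$. Feeding this divisibility into the slope window
\[
d+\tfrac{2d}{r}\le \mu(Q)< d+\tfrac{2d}{r-1}
\]
pins $\rk Q=r/2$, immediately disposing of odd $r$. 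For even $r$ one gets $\mu(Q)=\mu(N_{\tilde C|\PP^{r+1}})$, and then Atiyah's classification is invoked: the Jordan--H\"older factors $E_i$ of the semistable bundle $N_{\tilde C|\PP^{r+1}}$ are either all distinct or all occur in pairs (again by the translation-invariance argument), and in each case an explicit geometric construction shows that a \emph{general} projection center avoids the locus where some $E_i$ could survive as a stable quotient factor of $Q$. Neither your subbundle-side dimension count nor an interpolation argument substitutes for this; the interpolation detour you propose for the integer-slope cases $r-1\mid 2d$ is also unnecessary once the divisibility $d\mid\deg(Q)$ is in hand.
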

\begin{proof}
We fix $d$ and prove the theorem by descending induction on $r$. The base case, $r=d-1$, is the $i=1$ case of \cite[Theorem 4.1]{einlazarsfeld}.

Suppose the theorem is true for degree $d$ genus $1$ BN-curves in $\PP^{r+1}$. Let $\tilde{C}\subset \PP^{r+1}$ be a general degree $d$ genus 1 curve, and let $C\subset \PP^r$ be the projection of $\tilde{C}$ from a general point $p \in \PP^{r+1}$. Suppose $N_{C\vert \PP^r}$ is unstable with a destabilizing quotient $Q$ of minimal slope. 

We first show that $\deg(Q)$ is a multiple of $d$. Let $H$ be the hyperplane class on $C$. Then the complete linear series $|H|$ embeds $C$ in $\PP^{d-1}$ and $C$ is the projection of this embedding by a linear space of dimension $d-r-2$. For an open set $U$ in $\mathbb{G}(d-r-2, d-1)$, projection from $\Lambda \in U$ yields an elliptic curve of degree $d$ in $\PP^r$ whose normal bundle has a destabilizing quotient $Q_{\Lambda}$. Since there are no nonconstant rational maps from a Grassmannian to an abelian variety, $\det(Q_{\Lambda})$ depends only on the abstract curve and the linear system $|H|$. Let $\tau:C\ra C$ be translation by a $d$-torsion point. Since $\tau^*(\OO(H))\cong\OO(H)$, 
 $\det(Q)$ is also invariant under pullback by $\tau$. Hence, $\deg(Q)$ is a multiple of $d$.

We have the following diagram associated to the projection.
 \begin{equation}
     \begin{tikzcd}
         &&S\arrow[d,hookrightarrow]\\
         N_{\tilde{C}\to p}\arrow[r,hookrightarrow]&N_{\tilde{C}\vert \PP^{r+1}}\arrow[r,twoheadrightarrow]&N_{C\vert \PP^r}\arrow[d,twoheadrightarrow]\\
         &&Q
     \end{tikzcd}
 \end{equation}
$$\mbox{with} \quad \rk(N_{\tilde{C}\vert \PP^{r+1}})=r, \quad \deg(N_{\tilde{C}\vert \PP^{r+1}})=d(r+2),$$ $$\rk(N_{C\vert \PP^r})=r-1\quad  \mbox{and} \quad \deg(N_{C\vert \PP^{r}})=d(r+1).$$
Since $N_{\tilde{C}\vert \PP^{r+1}}$ is semistable, the slope of $Q$ satisfies
\[
\mu(N_{\tilde{C}\vert \PP^{r+1}})\leq \mu(Q) < \mu(N_{C\vert \PP^r}),
\]
hence
\[
d+\frac{2d}{r}\leq \mu(Q)< d+\frac{2d}{r-1}.
\]
Suppose $Q$ has rank $s$; since $Q$ has degree a multiple of $d$, we have that $\deg(Q)/d$ is an integer satisfying
\[
s+\frac{2s}{r} \leq \frac{\deg(Q)}{d}< s+\frac{2s}{r-1}.
\]
Since $0<s< r-1$, the only way this can be achieved is if $s=\frac{r}{2}$. If $r$ is odd, this leads to a contradiction immediately. If, instead, $r$ is even, we must have 
\[
\mu(Q)=d+\frac{2d}{r}=\mu(N_{\tilde{C}\vert \PP^{r+1}}).
\]
Set $\mu =d+\frac{2d}{r}$.
We establish a contradiction by showing that no stable bundles of slope $\mu$ can appear as a quotient of $N_{C\vert \PP^r}$. Since $Q$ is a quotient of $N_{\tilde{C}\vert \PP^{r+1}}$, any slope $\mu$ stable quotient of $Q$ appears on the finite list of stable quotient bundles in the Jordan-H\"older filtration of $N_{\tilde{C}\vert \PP^{r+1}}$ of slope $\mu$. We study those bundles now.

Let $a=\gcd(2d,r) = \gcd(\deg(N_{\tilde{C}\vert \PP^{r+1}}), \rk(N_{\tilde{C}\vert \PP^{r+1}}))$. By Atiyah's classification of semistable bundles on elliptic curves \cite{Atiyah},
$N_{\tilde{C}\vert \PP^{r+1}}$ is an iterated extension of $a$ stable bundles of rank $b:=\frac{r}{a}$ and degree $e:=\frac{d(r+2)}{a}$. Call these bundles $E_1,E_2,\ldots,E_a$. 

Suppose $E_1$ appears in this list the maximal number of times, $m$ times, and that $c$ distinct bundles $E_1,\ldots,E_c$ appear exactly $m$ times. Then $$\det(E_1)\det(E_2)\cdots \det(E_c)\in \Pic^{\frac{cd(r+2)}{a}}(C)$$ is determined just by the pair $(C,H)$, and as above $\frac{cd(r+2)}{a}$ must be a multiple of $d$. Since $a|r$,  we conclude that $a$ divides $2c$. Therefore, either  $a=c$ and all of $E_1,\ldots,E_a$ are distinct or $c=a/2$ and every bundle appears exactly twice. 

If each $E_i$ appears exactly once in the list $E_1,\ldots, E_a$, then $\Hom(N_{\tilde{C}\vert \PP^{r+1}}, E_i)$ is one-dimensional for all $i$, spanned by some map $f_i$. Fix $q\in \tilde{C}$, and define $V_q\subset \PP^{r+1}$ as the projectivization of the kernel of the composition
\[
T\PP^{r+1}\vert_{\tilde{C}}\ra N_{\tilde{C}\vert \PP^{r+1}}\xrightarrow{f_i} E_i.
\]
Since $V_q$ has codimension one in $\PP^{r+1}$ and the point of projection $p$ is general, $p \notin V_q$. 
Then the image of $N_{\tilde{C} \to p}\vert_q\ra N_{\tilde{C}\vert \PP^{r+1}}\vert_q$ is not contained in the fiber of $\ker(N_{\tilde{C}\vert \PP^{r+1}}\xrightarrow{f_i} E_i)$ at $q$, so $E_i$ cannot appear in any quotient of $N_{C\vert \PP^r}$.

Now suppose that each $E_i$ appears in the list $E_1,\ldots, E_a$ twice, and fix some $i$. If $\dim(\Hom(N_{C\vert \PP^{r+1}}, E_i))=1$, then the argument in the previous paragraph shows that $E_i$ cannot be a direct summand of the quotient $\frac{N_{\tilde{C}\vert \PP^{r+1}}}{N_{\tilde{C}\to p}}$ for $p$ general. Otherwise, we have $\dim(\Hom(N_{\tilde{C}\vert \PP^{r+1}}, E_i))=2$. In this case, let $f_1,f_2:N_{\tilde{C}\vert \PP^{r+1}}\ra  E_i$ be the two independent maps, and for each $q\in \tilde{C}$ define $V_{j,q}\subset \PP^{r+1}$ as the projectivization of the kernel of the composition
\[
T_q\PP^{r+1}\ra N_{\tilde{C}\vert \PP^{r+1}}\vert_q \xrightarrow{f_j} E_i.
\]
Since $C$ is nondegenerate,  $V_{1,q} \not= V_{2,q}$. Moreover, since $T_q \tilde{C} \subset V_{j,q}$ and $\tilde{C}$ is nondegenerate, $V_{1, q'} \cap V_{1,q} \not= V_{2, q'} \cap V_{1,q}$ for a general $q' \in \tilde{C}$.

 Fix $q\in \tilde{C}$ and let $q'\in \tilde{C}$ be a general point. Let $\ell\subset V_{1,q}$ be a line such that $q \in \ell$, $\ell \not\in V_{2,q}$, $\ell \not\in V_{1, q'}$ and there exists $p'\in \ell$ such that  $p' \in V_{2,q'}\setminus V_{1,q'}$. Then \[
N_{\tilde{C}\to p'}\vert_q\subset N_{\tilde{C}\vert \PP^{r+1}}\vert_q
\]
is contained in the kernel of $f_1$ and not $f_2$, while
\[
N_{\tilde{C}\to p'}\vert_{q'}\subset N_{\tilde{C}\vert \PP^{r+1}}\vert_{q'}
\]
is contained in the kernel of $f_2$ but not $f_1$.
Hence, there is no linear combination of the maps $f_1$ and $f_2$ that is identically zero on $N_{\tilde{C}\to p'}$. Since this is an open property for $p'$, for the general $p$ we chose originally, we have that $E_i$ cannot appear in the quotient $Q$ of $N_{C\vert \PP^{r}}\cong \frac{N_{\tilde{C}\vert \PP^{r+1}}}{N_{\tilde{C}\to p}}$.

Therefore, no $E_i$ can appear as a quotient of $Q$, and hence $Q$ cannot be a quotient of $N_{\tilde{C}\vert \PP^{r+1}}$. We conclude that $N_{C|\PP^r}$ is semistable.
\end{proof}

\begin{remark}
    Theorem \ref{genus1} significantly improves Ran's earlier asymptotic result  asserting  the semistability of the normal bundle of a general genus 1 curve of degree $d = 2r-2$ or $d \geq 3r-3$ \cite[Theorem 2]{ran23}.
\end{remark}

The discussion in the rest of this section was communicated to us by Eric Larson who also corrected an error in our earlier exposition.

\begin{proposition}\label{prop-g1distinct}
Let $r \geq 4$ and let $C \subset \PP^r$ be a general, nondegenerate elliptic curve of degree $d$ where $r-1$ divides $2d$.
\begin{enumerate}
 \item If the characteristic of the base field is 2, and $r$ is odd, and $r-1$ does not divide $d$, then $N_{C|\PP^r}$ is a direct sum of $\frac{r-1}{2}$ indecomposable rank 2 bundles. 
    \item Otherwise, $N_{C|\PP^r}$ is a direct sum of distinct line bundles.
   
\end{enumerate}
\end{proposition}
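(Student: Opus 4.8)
The plan is to combine Theorem~\ref{genus1} with Atiyah's classification and then split the statement into a ``regularity'' claim and a ``rank'' claim. First I would reduce to Atiyah's setup. By Theorem~\ref{genus1} the bundle $N := N_{C|\PP^r}$ is semistable, and since $g=1$ its slope is
\[
\mu(N) = d + \frac{2d}{r-1},
\]
which is an integer precisely because $r-1 \mid 2d$. Hence by Atiyah's classification \cite{Atiyah} of semistable bundles of integral slope on an elliptic curve, $N \cong \bigoplus_i F_{n_i}\otimes L_i$, where each $L_i \in \Pic^{\mu}(C)$ and $F_{n_i}$ is the indecomposable Atiyah bundle of rank $n_i$ and trivial determinant (so $F_1 = \OO_C$). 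Using $\Hom(F_m \otimes L, F_n \otimes L') = 0$ when $L \neq L'$ and $\dim\Hom(F_m, F_n) = \min(m,n)$, one finds
\[
\dim \mathrm{End}(N) = \sum_{L_i = L_j} \min(n_i, n_j) \geq \sum_i n_i = \rk(N),
\]
with equality if and only if the $L_i$ are pairwise distinct. Thus the proposition is equivalent to two claims: \emph{(Regularity)} the $L_i$ are pairwise distinct, i.e. $\dim\mathrm{End}(N) = r-1$; and \emph{(Ranks)} in the special case (characteristic $2$, $r$ odd, $r-1 \nmid d$) every $n_i$ equals $2$, while otherwise every $n_i$ equals $1$.

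For regularity I would argue by semicontinuity. The function $\dim\mathrm{End}$ is upper semicontinuous in families, and we have just seen that $r-1$ is its minimal possible value on semistable bundles of this rank and slope. So it suffices to exhibit one elliptic curve of the given numerical type whose normal bundle attains $\dim\mathrm{End} = r-1$, and then spread out over the irreducible family of general nondegenerate elliptic curves of degree $d$ in $\PP^r$. I would construct such an example with the degeneration machinery of the paper, specializing $C$ to a nodal union of rational and elliptic components and invoking the openness statement Proposition~\ref{prop:stab-open}; the determinant of $N$ is pinned down independently of the construction by the $\Lambda$-independence and $d$-torsion argument already used in the proof of Theorem~\ref{genus1} (maps from a rational Grassmannian to $\Pic^0(C)$ are constant, and translation by $d$-torsion fixes $\OO_C(H)$).

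The rank claim is the crux. The difficulty is that the two candidate bundles $\bigoplus_{i=1}^{r-1} L_i$ and $\bigoplus_{i=1}^{(r-1)/2} F_2\otimes L_i$ are \emph{both} regular: they share rank, slope, determinant, and $\dim\mathrm{End}$, so they are indistinguishable by any numerical invariant or by the cohomology of twists. Distinguishing them requires identifying the actual bundle, equivalently the multiplicity structure of the cycle $\sum_i n_i[L_i]$ on $\Pic^\mu(C)$. Outside the special case I would show this cycle is reduced, using a dimension count showing the $L_i$ can be moved to distinct points in the family together with the absence of any symmetry forcing a collision. In the special case the doubling is a genuine characteristic-$2$ phenomenon, which I would extract from a direct computation on an explicit model --- for instance the elliptic normal curve, via the presentation
\[
0 \to \OO_C^{\oplus 2} \to \OO_C(1)^{\oplus(r+1)} \to N \to 0
\]
obtained by splicing the tangent sequence ($TC \cong \OO_C$) with the Euler sequence. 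The parity conditions isolating the special case should emerge from this computation: $r$ odd forces $r-1$ even so that the summands can pair up, while $r-1 \nmid d$ together with $r-1\mid 2d$ forces $2d/(r-1)$ to be \emph{odd}, an odd-parity condition that in characteristic $2$ obstructs splitting the paired blocks. Establishing this doubling precisely, and confirming that it occurs only in the stated range, is the step I expect to be the main obstacle.
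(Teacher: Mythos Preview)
Your framework is sound: reducing to Atiyah's classification and isolating the ``regularity'' and ``rank'' claims is exactly the right decomposition, and you correctly observe that the two candidate bundles $\bigoplus L_i$ and $\bigoplus F_2\otimes L_i$ cannot be separated by $\dim\mathrm{End}$ or any cohomological invariant. But the proposal stops precisely where the real work begins.

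For the rank claim outside the special case, your suggested ``dimension count showing the $L_i$ can be moved to distinct points'' does not address the actual obstruction. In the competing scenario $N \cong \bigoplus_{i=1}^{(r-1)/2} F_2\otimes L_i$ the points $L_i\in\Pic^\mu(C)$ \emph{are} already distinct; what you must rule out is the multiplicity $n_i=2$, not a collision. Moving the $L_i$ around does nothing to this. The paper instead argues by degeneration: it reduces (via attaching $1$-secant lines, Lemma~\ref{lem-line}) to the minimal cases $d=\tfrac{3(r-1)}{2}$ and $d=2(r-1)$, then uses projection and the pointing-bundle sequence to induct on $r$, with the base case $r=5$, $d=6$ handled by the four Veronese surfaces through $C$ (one for each square root of $\OO_C(H)$, which is exactly where characteristic $\neq 2$ enters). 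Some geometric input of this kind, producing at least $\tfrac{r-1}{2}+1$ distinct line subbundles of the correct slope, seems unavoidable.

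For the characteristic-$2$ case, the exact sequence you wrote down is correct but does not by itself exhibit the doubling. The paper's mechanism is sharper: in characteristic $2$ the bundle of principal parts satisfies $\mathcal{P}^1(\OO_C(1)) \cong F^*F_*\OO_C(1)$, and dualizing the Euler presentation shows $N_{C|\PP^r}(-1)$ is a Frobenius pullback $F^*K$. When $r-1\nmid d$ the bundle $K$ has odd-degree rank-$2$ stable Jordan--H\"older factors, and one checks directly that $F^*$ of such a bundle is the nontrivial self-extension of a line bundle (via $F^*F_*L \cong \mathcal{P}^1(L)$). This is the structural reason for the parity condition you anticipated, but it does not fall out of a generic computation on the sequence $0\to\OO_C^2\to\OO_C(1)^{r+1}\to N\to 0$.

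Finally, your regularity step is more complicated than necessary: the translation/$d$-torsion argument you already cite from Theorem~\ref{genus1} (applied now to $N_{C|\PP^r}$ itself rather than to a destabilizing quotient) directly forces all the $n_i$ to be equal, hence all $1$ or all $2$; no separate semicontinuity-plus-example argument is needed to rule out mixed decompositions or case~B.
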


\begin{proof}[Proof of Proposition \ref{prop-g1distinct}]
    Let $C \subset \PP^r$ be a general nondegenerate curve of degree $d$ and genus 1. Since $C$ is nondegenerate, $d \geq r+1$. Since $r-1$ divides  $2d$, if  $r$ is odd, we have $d= \frac{k(r-1)}{2}$ with $k \geq 3$. If $r$ is even, then $d= m(r-1)$ for $m \geq 2$.

    \subsubsection*{Step 1:} By the argument in the proof of Theorem \ref{genus1}, $N_{C|\PP^r}$ is either a direct sum of $(r-1)$ distinct line bundles or it is a direct sum of $\frac{r-1}{2}$ distinct rank $2$ bundles obtained by a (possibly trivial) extension of the same line bundle.  When $r$ is even, the latter possibility cannot happen and $N_{C|\PP^r}$ must be a direct sum of distinct line bundles. Hence, we may assume that $r$ is odd.

    \subsubsection*{Step 2: Reduce to the case $k=3$ or $4$} We will prove part (2) of the proposition by induction on $k$ and $r$. We first reduce to the case $k=3$ or $4$ by induction on $k$. Specialize $C$ to $C' \cup \cup_{i=1}^s \ell_i$, where $\ell_i$ are $s$ disjoint 1-secant lines meeting $C'$ quasitransversely at the points $p_i$.  By Lemma \ref{lem-line} below, the bundle $N_{C'|\PP^r}[2p_1 \posmod \ell_1] \cdots [2p_s \posmod \ell_s](p_1 + \cdots + p_s)$ is a limit of the normal bundles $N_{C|\PP^r}$. Assume that for some $k$, we have $N_{C|\PP^r} \cong \oplus_{i=1}^{r-1} L_i$ where $L_i$ are distinct line bundles. Pick general points $p_1, \dots, p_{r-1}$ on $C$. At the point $p_j$ attach a line $\ell_j$ quasitransverse to $C$.  Specializing all the points $p_i$ to a general point $p$ while keeping the lines pointing in general directions, specializes $N_{C|\PP^r}[2p_1 \posmod \ell_1] \cdots [2p_{r-1} \posmod \ell_{r-1}] (p_1 + \cdots + p_{r-1})$ to $N_{C|\PP^r}((r+1)p)$, which is also a direct sum of distinct line bundles.  Thus the case of even $k$ reduces to $k=4$ and the case of odd $k$ reduces to $k=3$.

    \subsubsection*{Step 3: $k=4$} To settle the case $k=4$, we induct on $r$. Specialize $C$ to a general genus $1$ curve $C'$ of degree $2r-3$ union a line $\ell$ meeting quasitransversely at $p$. Let $q$ be a point on $\ell$ and specialize $q$ to a general point on $C'$. Project $C'$ from $q$ and denote the resulting curve of degree $2r-4$ in $\PP^{r-1}$ by $\overline{C}$. By the pointing bundle exact sequence (see for example \cite[Equation (8)]{lv22}), we have 
$$0 \to \OO_{C'}(1)(2q + 3p) \to N_{C'|\PP^r} [2p \posmod q](p) \to N_{\overline{C}|\PP^{r-1}}(p+q) \to 0.$$ By induction, as long as $r \geq 5$, the latter bundle is a direct sum of $r-2$ distinct line bundles. The base case of the induction is the case $r=4$ and $d=6$ treated in Step 1 above. Hence $N_{C|\PP^r}$ cannot be a (possibly trivial)  extension of $\frac{r-1}{2}$ pairs of line bundles. We conclude that when $r-1$ divides $d$ and $r \geq 4$, then $N_{C|\PP^r}$ is a direct sum of distinct line bundles.

  \subsubsection*{Step 4: $k=3$ when the characteristic is not 2}  Now we consider the case $k=3$ by induction. 
    If $d=6$ and $r=5$ and the characteristic of the base field is not 2, then the degree $6$ line bundle $L$ embedding $C \subset \PP^5$ has 4 distinct square roots $L_i$ such that $L_i^{\otimes 2} \cong L$.  Each $L_i$ gives an embedding of $C$ to $\PP^2$ as a cubic curve. The Veronese embedding $S_i \subset \PP^5$ of $\PP^2$, then embeds the curve as our original $C$. Hence,  $C \subset \PP^5$ lies in $4$ Veronese surfaces $S_i$ corresponding to $L_i$. The normal bundle $N_{C|S_i} \cong L_i^{\otimes 3}$. Since $N_{C|\PP^r}$ is semistable and $L_i^{\otimes 3}$ are line subbundles of the same slope, we conclude that $N_{C|\PP^5} \cong \oplus_{i=1}^4 L_i^{\otimes 3}$ and the normal bundle is a direct sum of distinct line bundles. When the characteristic of the base field is 2, then $L$ has only 2 distinct square roots and this argument breaks down.

When the characteristic is different from 2,  specialize $C$ to  $C' \cup \ell$, where $C'$ is a general genus $1$ curve of degree $\frac{3(r-1)}{2}-1$ and $\ell$ is a line meeting $C'$ quasitransversely at a general point $p$ and which intersects a general 2-secant line $M$ joining the points $q_1$ and $q_2$ of $C'$. Projecting $C'$ from $M$, we obtain a curve $\overline{C} \subset \PP^{r-2}$, where $\overline{C}$ is a curve of degree $\frac{3(r-2)}{2}$. By induction on $r$, $N_{\overline{C}|\PP^{r-2}}$ is a direct sum of distinct line bundles. Now we use the pointing bundle exact sequence \cite[\S 3.2]{lv22}
    $$0 \to N_{C'\to M}[2p\posmod \ell](p) \to N_{C'|\PP^r}[2p \posmod \ell](p) \to N_{\overline{C}|\PP^{r-2}}(p+ q_1 + q_2) \to 0.$$
    By induction, $N_{\overline{C}|\PP^{r-2}}(p+q_1 + q_2)$ is a direct sum of $r-3$ line bundles of degree $\frac{3r+3}{2}$. By \cite[Lemma 6.3]{lv22}, $N_{C'\to M}[2p\posmod \ell](p)$ is semistable and has the same slope $\frac{3r+3}{2}$. Since $r \geq 7$ and $r-3$ of the line bundles are distinct, they cannot all occur in pairs. By considering the semicontinuity of the endomorphisms of the associated graded, we see that the number of line bundles that occur in pairs can only increase in the limit.  We conclude that for the general curve $C$, the bundle $N_{C|\PP^r}$ must be a direct sum of distinct line bundles. This concludes the proof of part (2) of the proposition.

   \subsubsection*{Step 5: The characteristic is 2, $r$ is odd, and $r-1$ does not divide $d$} In this case, we need to show that the normal bundle is a direct sum of $\frac{r-1}{2}$ indecomposable rank 2 bundles. Let $F$ denote the  Frobenius morphism on the elliptic curve $C$. We claim that $N_{C/\PP^r}(-1)$  is the pullback of a bundle under $F$. Using the Euler sequence we can express the dual as follows
   $$0 \to N_{C|\PP^r}^{\vee} (1) \to \OO_C^{r+1} \to \mathcal{P}^1(\OO_C(1))\to 0,$$ where $\mathcal{P}^1(\OO_C(1))$ is the bundle of principal parts of $\OO_C(1)$. Since the characteristic is 2, the bundle of principal parts can be expressed as $$\mathcal{P}^1(\OO_C(1))= F^* F_* \OO_C(1)$$ (see \cite[\S 3]{clv22}). Consequently, $N_{C|\PP^r}^{\vee} (1)$  and hence $N_{C/\PP^r}(-1)$ are pullbacks under $F$. Write $N_{C|\PP^r} (-1) = F^* K$ for a bundle $K$. Since $N_{C|\PP^r} (-1)$ has degree $k(r-1)$ and is semistable, $K$ must be semistable and by the Atiyah classification must have $\frac{r-1}{2}$ stable rank 2 bundles of odd degree $k$ in its Jordan-H\"{o}lder filtration.

    Suppose $L$ is a line bundle of odd degree $k$ on $C$. Then $F_* L$ is a rank 2 stable bundle of degree $k$ since for any line bundle $M$ of degree at least $\frac{k+1}{2}$ we have $$\Hom(M, F_* L) \cong \Hom(F^*M, L) =0.$$ Next observe that if $V$ is a stable bundle of rank 2 and odd degree $k$, then $F^* V$ is indecomposable. By the Atiyah classification and the observation that $F_*L$ is stable, we may write $V= F_*L$. Since the characteristic is 2, the bundle $F^* F_* L$ is the bundle of principal parts of $L$ and fits in the exact sequence
   $$0 \to L \cong L\otimes \Omega_C \to F^*V = F^* F_* L \to L \to 0.$$ To see that the extension is nontrivial, we observe that $$\Hom(F^*V, L) = \Hom(F^* F_* L, L) = \Hom(F_*L, F_*L) = \Hom(V,V).$$ The latter is 1-dimensional since $V$ is stable. We conclude that $F^*V$ is indecomposable.  

   Hence, $N_{C/\PP^r}(-1)$ and hence $N_{C/\PP^r}$ must be a direct sum of $\frac{r-1}{2}$ indecomposable rank 2 bundles. This concludes the proof of part (1) of the proposition.
\end{proof}

\begin{lemma}\label{lem-line}
Let $\mathcal{C} \to B$ be a family of nodal curves with nonsingular total space and generic fiber over a nonsingular curve $B$. Suppose that the central fiber $C_{b_0}$ is a union  $C' \cup \cup_{i=1}^{s} \ell_i$ of a smooth curve $C'$ together with $s$ disjoint 1-secant lines $\ell_i$ meeting $C'$ quasitransversely at points $p_i$. Then $N_{C'|\PP^r}[2p_1 \posmod \ell_1] \cdots [2p_s \posmod \ell_s](p_1 + \cdots + p_s)$ is a limit of the normal bundles $N_{\mathcal{C}_b|\PP^r}$.
\end{lemma}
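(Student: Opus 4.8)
The plan is to identify the flat limit of the normal bundles as an explicit bundle on the nodal central fiber and then to account for the rational tails $\ell_i$. First I would use that, since the total space $\mathcal{C}$ is smooth, the inclusion $\mathcal{C}\subset \PP^r\times B$ is a regular embedding, so the relative normal sheaf $\mathcal{N}:=N_{\mathcal{C}\vert \PP^r\times B}$ is a vector bundle on $\mathcal{C}$ and hence flat over $B$. Its restriction to a general fiber is $N_{\mathcal{C}_b\vert \PP^r}$, so $\mathcal{N}\vert_{C_{b_0}}$ is literally the flat limit of the $N_{\mathcal{C}_b\vert \PP^r}$. Because the $\ell_i$ are disjoint and each meets $C'$ in the single point $p_i$, the entire computation is local at the nodes and I may treat one line at a time. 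By the Hartshorne--Hirschowitz lemma recalled above, this flat limit is the normal bundle of the nodal central fiber; in particular its restriction to $C'$ is $N_{C'\vert \PP^r}[p_1\posmod \ell_1]\cdots [p_s\posmod \ell_s]$, while its restriction to each line is $N_{\ell_i\vert \PP^r}[p_i\posmod C']\cong \OO_{\ell_i}(2)\oplus \OO_{\ell_i}(1)^{\oplus(r-2)}$, of degree $r$.

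The stated bundle differs from this $C'$-restriction by moving, for each $i$, the degree $r$ carried by the line onto the point $p_i\in C'$, so that the total degree $(\deg C'+s)(r+1)+2g-2$ is concentrated on $C'$. The mechanism I would use is that each $\ell_i$ is a rational tail, i.e.\ a smooth rational curve meeting the rest of $C_{b_0}$ in the single point $p_i$, and hence may be contracted. Contracting $\ell_i$ transfers $\mathcal{N}\vert_{\ell_i}\cong \OO_{\ell_i}(2)\oplus \OO_{\ell_i}(1)^{\oplus(r-2)}$ onto $p_i$; after twisting the tail down to balanced degree $0$, the induced elementary modifications on the $C'$ side upgrade the single positive modification $[p_i\posmod \ell_i]$ to the length-two modification $[2p_i\posmod \ell_i]$ and produce the overall twist by $\OO_{C'}(p_i)$. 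Carrying this out at every node yields
$$N_{C'\vert \PP^r}[2p_1\posmod \ell_1]\cdots [2p_s\posmod \ell_s](p_1+\cdots+p_s),$$
which is then a limit of the $N_{\mathcal{C}_b\vert \PP^r}$ in the sense required by the induction, the tails having been contracted so that all of the degree sits on $C'$.

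The step I expect to be the main obstacle is precisely this tail--contraction computation: one must check that pushing the degree-$r$ bundle $\OO(2)\oplus\OO(1)^{\oplus(r-2)}$ off each $\ell_i$ produces \emph{exactly} the modification along the length-two scheme $2p_i$ and \emph{exactly} the twist by $\OO(p_i)$, i.e.\ that the factor of $2$ and the single point of the twist appear with the correct multiplicities. I would verify this through the explicit local model of the smoothing ($xy=t$ near each node) together with the modification calculus of \cite{aly, lv22}, or equivalently by exhibiting for $N_{C'\vert\PP^r}[2p_i\posmod \ell_i](p_i)$ the pointing--bundle exact sequence under projection from a general point of $\ell_i$, as in the sequence used for the middle term in Step 3 of the proof of Proposition \ref{prop-g1distinct}. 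A secondary but necessary point, for the applications, is that this identification is realized by a genuine flat specialization, so that openness of (semi)stability (Proposition \ref{prop:stab-open}) and upper--semicontinuity of $\dim \Hom$ transfer along it.
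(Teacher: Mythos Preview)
Your plan is essentially the paper's plan: form the relative normal bundle $\mathcal{N}$ on the smooth total space, identify its restrictions to $C'$ and to the $\ell_i$ via Hartshorne--Hirschowitz, then make the bundle trivial on each $\ell_i$ so that it descends along the blow-down of the $(-1)$-curves $\ell_i$.

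The one point where your description is imprecise, and where the paper's argument is sharper, is the phrase ``twisting the tail down to balanced degree $0$.'' The restriction $\mathcal{N}|_{\ell_i}\cong\OO(2)\oplus\OO(1)^{\oplus(r-2)}$ is \emph{unbalanced}, so no twist by a multiple of $\ell_i$ alone will trivialize it. The paper resolves this by first performing, on the total space, a negative elementary modification of $\mathcal{N}$ along the divisor $\ell_i$ with quotient the summand $\OO_{\ell_i}(2)$; by the general formula $V[D\negmod F]|_D\cong(\OO_D(D)\otimes F)\oplus V'$ this yields restriction $\OO(1)^{\oplus(r-1)}$ on $\ell_i$, and only then does a twist by $\OO(\sum_i\ell_i)$ (recall $\ell_i^2=-1$) make the restriction trivial. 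Tracking these two operations on the $C'$ side is then a direct modification-calculus computation: the negative modification along $\ell_i$ towards $\OO(2)$ is exactly what upgrades $[p_i\posmod\ell_i]$ to $[2p_i\posmod\ell_i]$ on $C'$, and the twist by $\OO(\ell_i)$ restricts to $\OO_{C'}(p_i)$. This replaces your proposed local--model or pointing--bundle verification with a clean global calculation; the pointing--bundle alternative you suggest does not directly yield this identification.
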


\begin{proof}
Let $S'$ denote the total space of the family. Then the normal bundles of the fibers define a vector bundle $N$ on $S'$. The restriction of $N$ to $\ell_i$ is $\OO_{\ell_i}(2) \oplus \OO_{\ell_i}(1)^{\oplus r-2}$. The restriction of $N$ to $C'$ is $N_{C'|\PP^r} [p_1 \posmod \ell_1] \cdots [p_s \posmod \ell_s]$. Let $N_1$ be the  elementary modification of $N$ along $\ell_i$ with quotient $\OO_{\ell_i}(2)$ for every line $\ell_i$. By \cite[\S 2]{aly} (see also \cite[\S 2.2]{clv23}) if $V|D = F \oplus V'$, then $V[D \negmod F]|_D = \OO_D(D) \otimes F \oplus V'$. Hence, $N_1$ restricts to $\OO_{\ell_i}(1)^{\oplus (r-1)}$ along each $\ell_i$. Let $N_2 := N_1 (\sum_{i=1}^s \ell_i)$. Then $N_2$ restricts to each $\ell_i$ as the trivial bundle $\OO_{\ell_i}^{\oplus (r-1)}$. On the other hand, $N_2$ restricts to $C'$ as $N_{C'|\PP^r}[2p_1 \posmod \ell_1] \cdots [2p_s \posmod \ell_s](p_1 + \cdots + p_s)$.

   Since the $\ell_i$ are $(-1)$-curves in $S'$, we can blow them down to obtain a smooth surface $S$. Since $N_2|_{\ell_i}$ is trivial, the bundle $N_2$ is the pullback of a bundle from $S$.  This new family realizes $N_{C'|\PP^r}[2p_1 \posmod \ell_1] \cdots [2p_s \posmod \ell_s] (p_1 + \cdots + p_s)$ as the limit of $N_{\mathcal{C}_b|\PP^r}$.
\end{proof}

\begin{remark}
Although we will not need it in this paper, for completeness  we discuss normal bundles of general nondegenerate elliptic curves in $\PP^3$. 
\begin{enumerate}
    \item A degree $4$ genus 1 curve $C$ in $\PP^3$ is a complete intersection of two quadric hypersurfaces. Hence,  $N_{C|\PP^3} \cong \OO_C(2)^{\oplus 2}$.
    \item If the degree is odd and the characteristic of the base field is 2, then $N_{C|\PP^3}$ is the nontrivial extension of $\OO_C(2)$ by $\OO_C(2)$.
    \item Otherwise, $N_{C|\PP^3}$ is a direct sum of distinct line bundles.
\end{enumerate}
Assume that the degree $d$ of the elliptic curve $C$ is at least 5. 
Larson in \cite{la24} proves that $h^0(N_{C|\PP^3}(-2))=1$ if the characteristic is 2 and $d$ is odd, and  $h^0(N_{C|\PP^3}(-2))=0$ otherwise. By Step 5 of the proof of Proposition \ref{prop-g1distinct}, if the characteristic is 2 and $d$ is odd, $N_{C|\PP^3}$ is a nontrivial extension of a line bundle $L$ by itself. Since $h^0(N_{C|\PP^3}(-2))=1$, the line bundle $L$ must be $\OO_C(2)$. This proves (2).

Otherwise,  $N_{C|\PP^3}$ is either a direct sum of two distinct line bundles or it is a (possibly trivial) extension of a line bundle $L$ by itself. Since $\det (N_{C|\PP^3}) \cong \OO_C(4)$, in the latter case, $L$ differs from $\OO_C(2)$ by a 2-torsion line bundle. Since 2-torsion bundles on $C$ are discrete and by \cite[\S7 case $(5,1)$]{clv22}  and Step 2 of Proposition \ref{prop-g1distinct} $N_{C|\PP^3}$ can be specialized to an extension of $\OO_C(2)$ by itself,  $L$ must be $\OO_C(2)$. If $N_{C|\PP^3}$ were a possibly trivial extension of $\OO_C(2)$, then $h^0(N_{C|\PP^3}(-2))>0$ contradicting Larson's theorem. Hence, $N_{C|\PP^3}$ must be a direct sum of distinct line bundles. This proves (3).
\end{remark}

\section{Degenerations}\label{sec-degenerations}

In this section, we use three specializations to reduce the (semi)stability of the normal bundle of a general BN-curve to one of smaller degree or genus. 
We specialize a smooth BN-curve $C\subset \PP^r$ to the union of a BN-curve $C'$ and $r-1$ rational curves of degree $r-1$, all $(r+1)$-secant to $C$. We specialize $C$ to the union of a BN-curve $C'$ and a collection of 1 and 2-secant lines. Finally,  we specialize a genus 2 curve to the nodal union of two genus 1 curves.
\subsection{Attaching degree \texorpdfstring{$r-1$}{r-1} rational curves}
Let $C \subset \PP^r$ be a smooth BN-curve of degree $d$ and genus $g$. For $0\leq i\leq r-1$, let $R_i$ be a general rational curve of degree $r-1$ which is $(r+1)$-secant to $C$. 
Let $$C'= C \cup \bigcup_{i=1}^{r-1} R_i.$$

\begin{lemma}\label{ratlCurves}
 Let $\rho(d,g,r) \geq r-1$.
If $N_{C|\PP^r}$ is (semi)stable, then $N_{C'|\PP^r}$ is (semi)stable.
\end{lemma}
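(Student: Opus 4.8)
The plan is to view $N_{C'|\PP^r}$ as a bundle on the nodal curve $C' = C \cup \bigcup_{i=1}^{r-1} R_i$ and to conclude via the gluing criterion of Lemma \ref{lem:naive}, applied with $X = C$ and $Y = \bigcup_i R_i$ (since $r \geq 4$, the general $R_i$ are pairwise disjoint, so $Y$ is a disjoint union of smooth rational curves). By the Hartshorne--Hirschowitz lemma, the restriction of $N_{C'|\PP^r}$ to each component is the normal bundle of that component, modified positively at the nodes in the tangent directions of the adjoining component. Concretely, each $R_i$ is $(r+1)$-secant to $C$, so $N_{C'|\PP^r}|_C$ is $N_{C|\PP^r}$ modified positively at the $r^2-1$ secant points, while $N_{C'|\PP^r}|_{R_i}$ is $N_{R_i|\PP^r}$ modified positively at $r+1$ points. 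It then suffices to prove that each restriction is semistable, and that $N_{C'|\PP^r}|_C$ is stable whenever $N_{C|\PP^r}$ is; Lemma \ref{lem:naive} (applied with $X=C$, $Y=\bigcup_i R_i$, using that the $R_i$-restrictions have equal slope) then gives the claim. The hypothesis $\rho(d,g,r) \geq r-1$ enters by guaranteeing, via $r-1$ applications of Lemma \ref{reducibleBNCurves}(3), that $C'$ is itself a BN-curve, which is what makes this degeneration usable in the induction.

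For the rational components, note that $R_i \cong \PP^1$ carries $N_{R_i|\PP^r} \cong \OO(r-1)\oplus\OO(r+1)^{\oplus(r-2)}$, of degree $r^2-3$; the $r+1$ positive modifications raise the degree to $(r-1)(r+2)$, so the target slope is the integer $r+2$. Since a bundle on $\PP^1$ is semistable exactly when it is balanced, I would check that $r+1$ general positive elementary modifications carry $\OO(r-1)\oplus\OO(r+1)^{\oplus(r-2)}$ to $\OO(r+2)^{\oplus(r-1)}$: a general positive modification raises a smallest summand by one, and tracking the multiplicities through the $r+1$ steps produces the balanced bundle. The directions modified are the tangent lines $T_{p_j}C$, which are general since $R_i$ is a general $(r+1)$-secant rational normal curve, so these modifications are indeed general.

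The heart of the argument is the restriction to $C$: I must show that the semistable bundle $N := N_{C|\PP^r}$, after $r^2-1 = (r-1)(r+1)$ general positive modifications, stays (semi)stable. I would prove the general principle that $(r-1)(r+1)$ general positive modifications preserve (semi)stability of a bundle of rank $r-1$. The decisive point is that the number of modifications is exactly $(r+1)$ times the rank, which lets me specialize the modification data to a highly nongeneric configuration: $r+1$ points, each carrying a complete flag of $r-1$ directions. Modifying toward a complete flag at a point $p$ is the same as twisting by $\OO(p)$, so this specialization realizes $N(D)$ for an effective divisor $D$ of degree $r+1$ as a flat limit. Tensoring by a line bundle preserves (semi)stability, so $N(D)$ is (semi)stable, and by the openness of (semi)stability under specialization (Proposition \ref{prop:stab-open}, applied to the constant family $C \times \Delta$ with special fiber $N(D)$ and generic fiber our modification) the general modification $N_{C'|\PP^r}|_C$ is (semi)stable. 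The same family preserves stability, so $N_{C'|\PP^r}|_C$ is stable when $N_{C|\PP^r}$ is.

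I expect the main obstacle to be precisely this modification statement for the restriction to $C$. Two points require care: first, justifying that collapsing the $(r-1)(r+1)$ general modifications to the flag configuration is a genuine flat specialization whose limit is $N(D)$, which I would handle with the multiple-modification formalism of \cite{aly,lv22} (extending each $W_i$ to a neighborhood of its support); and second, verifying that the tangent directions of the $R_i$ are general enough to place $N_{C'|\PP^r}|_C$ in the resulting dense open (semi)stable locus, which follows from the genericity of the $(r+1)$-secant rational normal curves. By contrast, the $\PP^1$ balancing for the $R_i$ and the bookkeeping of slopes are routine.
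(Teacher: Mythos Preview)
Your overall strategy matches the paper's: restrict $N_{C'|\PP^r}$ to $C$ and to each $R_i$, verify (semi)stability on each piece, and conclude via Lemma~\ref{lem:naive}. Your treatment of the restriction to $C$ is essentially the paper's argument---specialize so that all $r-1$ curves $R_i$ pass through the \emph{same} $r+1$ points of $C$ with linearly independent tangent directions at each, so that the modified normal bundle becomes $N_{C|\PP^r}(p_1+\cdots+p_{r+1})$---though the paper carries this out by specializing the actual curves $R_i$ rather than abstractly ``specializing modification data,'' and verifies explicitly that the tangent directions can be made independent. (Your phrase ``complete flag of $r-1$ directions'' should read ``$r-1$ linearly independent directions''; a flag would not give a twist.)

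The place where you underestimate the difficulty is the restriction to $R_i$, which you call routine. The modification directions at the nodes are the tangent lines $T_{p_j}C$, and these are \emph{not} a priori general directions in $N_{R_i|\PP^r}|_{p_j}$: the points $p_j$ are forced to lie in a hyperplane section of $C$, and once $R_i$ is chosen the directions $T_{p_j}C$ are fixed by $C$, not freely variable. So the assertion ``a general positive modification raises a smallest summand by one'' needs justification in this constrained setting. The paper does not attempt this balancing calculation and instead invokes \cite[Lemma~5.8]{lv22}, which establishes semistability of $N_{C'|\PP^r}|_{R_i}$ directly---under the hypothesis that $C$ is not an elliptic normal curve. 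This is a second use of $\rho(d,g,r)\geq r-1$ that you miss: beyond guaranteeing (via repeated use of Lemma~\ref{reducibleBNCurves}(3)) that $C'$ is a BN-curve, it gives $\rho>1$ and hence rules out the elliptic normal case where the cited lemma would fail.
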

\begin{proof}
We show that the restriction $N_{C'\vert \PP^r}\vert_C$ is (semi)stable and that all the bundles $N_{C'\vert \PP^r}\vert_{R_i}$ are semistable, so the result follows from Lemma \ref{lem:naive}.

To show $N_{C'\vert \PP^r}\vert_C$ is (semi)stable, we specialize $C'$. Let $H$ be a general hyperplane section of $C$ and let $\Gamma=\{p_1,\ldots,p_{r+1}\}$ be $r+1$ distinct points in $C\cap H$. Specialize each $R_i$ to a general rational normal curve $R_i^0$ through the points of $\Gamma$, and let $C'^0=C\cup \bigcup_i R_i^0$. 

We first claim that at any point $p\in \Gamma$, the tangent spaces $T_pR_i^0$ are linearly independent in $H$. To show this, we note that given a general tangent direction $v$ at a point $p_i\in \Gamma$, there is a rational normal curve $R$ through all the points of $\Gamma$ that also has tangent direction $v$ at $p_i$. Hence, given general curves $R_1,\ldots, R_a$ with $a < r-1$, it is possible to find $R_{a+1}$ whose tangent line does not lie in the linear span of the tangent lines of $R_1,\ldots,R_a$ at any $p_i$. By openness, we then have that the tangent directions of all the $T_{p_i}R_j$ are linearly independent. This establishes the claim.

We observe that the limit of $N_{C'\vert \PP^r}\vert_C$ as $C'$ approaches $C'^0$
 is 
 \[N_{C\vert \PP^r}(p_1\posmod R_1)\cdots (p_1\posmod R_{r-1})\cdots(p_{r+1}\posmod R_1)\cdots (p_{r+1}\posmod R_{r-1}),
\]
which is isomorphic to the (semi)stable bundle $N_{C\vert \PP^r}(p_1+\cdots+p_{r+1})$.

Meanwhile, since $\rho(d,g,r) \geq r-1 > 1$, the curve $C$ is not an elliptic normal curve. Hence, the bundles $N_{C'\vert \PP^r}\vert_{R_i}$ are all semistable by \cite[Lemma 5.8]{lv22}.
By Lemma \ref{lem:naive}, $N_{C'\vert \PP^r}$ is (semi)stable.
\end{proof}

\begin{corollary}\label{cor-rnc}
    If $(d,g,r)$ is a BN (semi)stable triple and $\rho(d,g,r) 
    \geq r-1$, then $(d+ (r-1)^2, g + r(r-1), r)$ is a BN (semi)stable triple.
\end{corollary}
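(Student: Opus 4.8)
The plan is to recognize the curve $C' = C \cup \bigcup_{i=1}^{r-1} R_i$ produced in Lemma \ref{ratlCurves} as a BN-curve of the asserted type, and then combine Lemma \ref{ratlCurves} with openness of (semi)stability. First I would record the numerology of $C'$. Since each $R_i$ is a rational normal curve of degree $r-1$ and there are $r-1$ of them, $\deg C' = d + (r-1)^2$. Taking the $R_i$ pairwise disjoint and each meeting $C$ quasitransversely in $r+1$ points, the total number of nodes is $(r-1)(r+1)=r^2-1$ and the number of components is $r$, so the arithmetic genus is $g + (r^2-1) - (r-1) = g + r(r-1)$. Thus $C'$ has exactly the degree and genus in the statement.

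Next I would verify that $C'$ is a BN-curve by attaching the $R_i$ one at a time and applying Lemma \ref{reducibleBNCurves}(3) at each step. The key computation is that attaching a single $(r+1)$-secant rational normal curve of degree $r-1$ sends a triple $(d,g,r)$ to $(d+r-1,\,g+r,\,r)$ and changes the Brill--Noether number by
\[
\rho(d+r-1,\,g+r,\,r) = \rho(d,g,r) - 1 .
\]
Hence after attaching $j$ curves ($0 \le j \le r-1$) the current triple has Brill--Noether number $\rho(d,g,r)-j$. Each of the $r-1$ attachments is performed when $0 \le j \le r-2$ curves have already been attached, so the relevant value $\rho(d,g,r)-j$ is at least $\rho(d,g,r)-(r-2) \ge 1$; thus the hypothesis $\rho \ge 1$ of Lemma \ref{reducibleBNCurves}(3) holds at every step, and after all $r-1$ attachments the final Brill--Noether number is $\rho(d,g,r)-(r-1)\ge 0$, so the final triple is genuinely a BN triple. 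The remaining hypothesis $d\ge r+1$ of Lemma \ref{reducibleBNCurves}(3) holds for the original curve (an irreducible nondegenerate genus $g$ curve of degree $r$ in $\PP^r$ is a rational normal curve, forcing $\rho=0$, incompatible with $\rho\ge r-1\ge 1$) and the degree only increases under attachment. Therefore $C'$ is a BN-curve of type $(d+(r-1)^2,\,g+r(r-1),\,r)$.

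Finally, Lemma \ref{ratlCurves} shows that $N_{C'|\PP^r}$ is (semi)stable whenever $N_{C|\PP^r}$ is, which holds because $(d,g,r)$ is a (semi)stable triple and $\rho(d,g,r)\ge r-1$. Since $C'$ is a BN-curve, it is a flat limit of smooth BN-curves of type $(d+(r-1)^2,\,g+r(r-1),\,r)$; applying Proposition \ref{prop:stab-open} to the bundle of normal bundles over such a one-parameter degeneration upgrades the (semi)stability of $N_{C'|\PP^r}$ to (semi)stability of the normal bundle of the \emph{general} BN-curve of this type. Hence $(d+(r-1)^2,\,g+r(r-1),\,r)$ is a (semi)stable triple.

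I expect the only genuinely delicate point to be the bookkeeping of the Brill--Noether number across the iterated attachments: one must check that the bound $\rho(d,g,r)\ge r-1$ is exactly strong enough to keep $\rho \ge 1$ at every intermediate stage (so that Lemma \ref{reducibleBNCurves}(3) applies) while still leaving $\rho \ge 0$ at the end. Everything else is degree/genus arithmetic or a direct appeal to Lemma \ref{ratlCurves} and Proposition \ref{prop:stab-open}.
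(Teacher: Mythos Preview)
Your proof is correct and follows essentially the same approach as the paper: both verify that $C'$ is a BN-curve via Lemma \ref{reducibleBNCurves} and then invoke Lemma \ref{ratlCurves}. You have simply made explicit what the paper's two-sentence proof leaves implicit---the genus computation, the step-by-step verification that $\rho$ stays at least $1$ through each of the $r-1$ attachments, and the final appeal to Proposition \ref{prop:stab-open} to pass from the nodal curve $C'$ to the general smooth BN-curve.
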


\begin{proof}
   By Lemma \ref{reducibleBNCurves}, the union of a general BN-curve $C$ of degree $d$ and genus $g$ in $\PP^r$ with $r-1$ general $(r+1)$-secant rational curves of degree $r-1$ is a BN-curve of degree $d+ (r-1)^2$ and genus $g + r(r-1)$. The corollary follows by Lemma \ref{ratlCurves}.
\end{proof}

\subsection{Attaching lines to curves}
Next, following \cite{clv22, lv22}, we specialize our BN-curve of degree $d$ and genus $g$ to a union of  a BN-curve with smaller degree or genus and several lines. The following lemma allows us to show that the normal bundle of the resulting configuration is (semi)stable.

\begin{lemma}\label{lines}
Let $C\subset \PP^r$ be a smooth curve and $\{\ell_i\}_{1\leq i\leq e}$ be disjoint lines such that $$C'=C\cup \bigcup_{i=1}^e \ell_i$$ is a connected nodal curve with nodes $p_1,\ldots,p_M$. For each $j$, let $\ell(p_j)$ be the line containing $p_j$. If 
\[
N_{C\vert \PP^n}[2p_1\posmod \ell(p_1)]\cdots [2p_M\posmod \ell(p_M)]
\]
is (semi)stable, then $N_{C'\vert \PP^r}$ is (semi)stable.
\end{lemma}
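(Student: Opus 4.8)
The plan is to work directly from the definition of (semi)stability on the nodal curve $C'$ via its normalization $\nu\colon\widetilde{C'}\to C'$, rather than to apply Lemma \ref{lem:naive} with the lines as separate components. The obstruction to the naive approach is that, by Hartshorne--Hirschowitz, $\nu^*N_{C'\vert\PP^r}$ restricts on each line $\ell_i$ to a positive elementary modification of $\OO_{\ell_i}(1)^{\oplus(r-1)}$ at the nodes on $\ell_i$, namely a bundle of the shape $\OO_{\ell_i}(2)^{\oplus s_i}\oplus\OO_{\ell_i}(1)^{\oplus(r-1-s_i)}$ (where $\ell_i$ is $s_i$-secant), which is \emph{not} semistable; so Lemma \ref{lem:naive} cannot be fed the lines. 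The content of the statement is that this destabilizing potential of the lines is exactly absorbed by the modified bundle $W:=N_{C\vert\PP^r}[2p_1\posmod\ell(p_1)]\cdots[2p_M\posmod\ell(p_M)]$ on $C$. Conceptually, $W$ is the descent of $N_{C'\vert\PP^r}$ (after the balancing modifications of Lemma \ref{lem-line}) under the contraction collapsing the lines; with the twisting conventions there, $\mu(W)=\mu(N_{C'\vert\PP^r})$.

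Concretely, I would fix an arbitrary subbundle $F\subseteq\nu^*N_{C'\vert\PP^r}$, which we may take to be saturated of constant rank $k$ on every component, and set $F_C:=F|_C$ and $F_i:=F|_{\ell_i}$. Unwinding the definition of the adjusted slope gives
\[
\mu^{\mathrm{adj}}_{C'}(F)=\frac{1}{k}\left(\deg F_C+\sum_i\deg F_i-\sum_{j=1}^{M}\codim_F\bigl(F|_{\tilde p_{j,1}}\cap F|_{\tilde p_{j,2}}\bigr)\right).
\]
The heart of the argument is a transfer construction $\Phi$ producing a rank-$k$ subsheaf $\Phi(F)\subseteq W$: take $\Phi(F)$ to be the saturation of $F_C$ inside $W$, using that $W$ agrees with $\nu^*N_{C'\vert\PP^r}|_C$ away from the nodes and is obtained from it by one further positive modification toward $\ell(p_j)$ at each node $p_j$ (together with the twist built into the double modifications). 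Since $W$ is (semi)stable by hypothesis, $\mu(\Phi(F))\leqor\mu(W)=\mu(N_{C'\vert\PP^r})$, so it remains to establish the bookkeeping inequality $\mu^{\mathrm{adj}}_{C'}(F)\le\mu(\Phi(F))$.

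I expect this per-node bookkeeping to be the decisive step. One must show that at each node $p_j$ the contribution of the line-part $F_{\ell(p_j)}$ to $\deg F$, minus the codimension penalty $\codim_F\bigl(F|_{\tilde p_{j,1}}\cap F|_{\tilde p_{j,2}}\bigr)$, is dominated by the degree that $\Phi$ gains at $p_j$ from the extra modification and twist defining $W$. The delicate feature is that $F_i$ is a \emph{single} rank-$k$ subbundle of $\OO_{\ell_i}(2)^{\oplus s_i}\oplus\OO_{\ell_i}(1)^{\oplus(r-1-s_i)}$ charged against all $s_i$ nodes of $\ell_i$ at once: its degree exceeds $k$ by at most $\min(k,s_i)$, the number of $\OO(2)$-summands it can absorb, whereas the twist alone contributes $k$ per node. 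Thus multisecant lines carry ample slack, and the only tight case is a $1$-secant line with $F_C|_{p}=F_i|_{p}$ containing the modification direction --- precisely the configuration in which the extra modification raises $\deg\Phi(F)$ by one, restoring the inequality. For the stable case one tracks when all of these estimates are simultaneously equalities and invokes strict stability of $W$ to force $\Phi(F)=W$, hence $F=\nu^*N_{C'\vert\PP^r}$, so that no proper $F$ can achieve equality.
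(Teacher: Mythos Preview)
Your overall strategy matches the paper's: work directly with the adjusted slope on the normalization, transfer a test subbundle $F$ to a subsheaf of the hypothesized (semi)stable bundle on $C$, and compare. The paper takes the dual route --- it pulls $S|_C$ back to the subsheaf $N'=N_{C|\PP^r}[2p_1\negmod\ell(p_1)]\cdots[2p_M\negmod\ell(p_M)](p_1+\cdots+p_M)\subset N_{C'|\PP^r}|_C$, which is your $W$ twisted down by $\sum p_j$, rather than saturating up into $W$ --- but this difference is cosmetic.

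However, the bookkeeping inequality $\mu^{\mathrm{adj}}_{C'}(F)\le\mu(\Phi(F))$ you are aiming for is false as stated, because your claimed equality $\mu(W)=\mu(N_{C'|\PP^r})$ is wrong: a direct computation gives $\mu(N_{C'|\PP^r})=\mu(W)+e$, the discrepancy coming from the $e$ lines. Indeed, taking $F=\nu^*N_{C'|\PP^r}$ itself gives $\Phi(F)=W$ and $\mu^{\mathrm{adj}}_{C'}(F)=\mu(N_{C'|\PP^r})>\mu(W)=\mu(\Phi(F))$. Lemma~\ref{lem-line}, which you invoke for intuition, only contracts $1$-secant lines and inserts an extra twist by $\sum p_i$; no such twist is present in $W$, and for multisecant lines the numerology is different. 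The correct target is $\mu^{\mathrm{adj}}_{C'}(F)\leqor\mu(\Phi(F))+e$. Establishing this is precisely the paper's node-by-node count: with indicators $\epsilon_j$ (whether $F|_{\ell(p_j)}$ hits the node-smoothing quotient at $p_j$), $\delta_j$ (whether $F_C$ contains that direction, equivalently whether saturation in $W$ gains at $p_j$), and $\gamma_j$ (whether the two fibers over $p_j$ disagree), one has $\sum_i\deg F_i\le ke+\sum_j\epsilon_j$ and $\deg\Phi(F)-\deg F_C=\sum_j\delta_j$, so after subtracting the penalty $\sum_j\gamma_j$ the inequality reduces to $\epsilon_j\le\delta_j+\gamma_j$ at each node --- immediate once one observes that $\gamma_j=0$ forces $\delta_j=\epsilon_j$. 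Your heuristic that ``multisecant lines carry ample slack and only $1$-secant lines are tight'' is therefore also off: the tight analysis is genuinely per node, not per line, and the slack you attribute to the ``twist'' contributing $k$ per node does not exist.
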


\begin{proof}
Let $\nu:C\sqcup \bigsqcup_{i=1}^e \ell_i\ra C'$ be the normalization and let $S$ be a subbundle of $\nu^* N_{C'\vert \PP^r}$ Consider $S\vert_C\subset N_{C'\vert \PP^n}\vert_C \cong N_{C\vert \PP^r}[p_1 \posmod \ell(p_1)]\dots[p_N\posmod \ell(p_N)]$, and set $N'= N_{C\vert \PP^r}[2p_1\negmod \ell(p_1)]\dots[2p_N\negmod \ell(p_N)](p_1+\ldots+p_N)$, so $N'$ is the subbundle of $N_{C'\vert \PP^r}\vert_{C}$ that does not smooth the nodes. We have an exact sequence
\[
0\ra N' \ra N_{C'\vert \PP^r}\vert_C \ra \bigoplus_{1\leq i \leq N} T_{p_i}\PP^r/T_{p_i}C'\ra 0.
\]
Let $S'$ be the preimage of $S\vert_C$ in $N'$. Assume $S\vert_C$ has degree $a$ and rank $s$, so $S'$ has degree $a-sN+\sum_{1\leq j \leq N}\delta_i$, where  $\delta_j$ is given by
\[
\delta_j=\twopartdef{1}{S(-p_j)\vert_{p_j} \text{ includes the normal direction along } \ell(p_j)}{0}{\text{otherwise}}.
\]
We now consider the restrictions $S\vert_{\ell_i}$. We have 
$N_{C'\vert \PP^r}\vert_{\ell_i}\cong N_{\ell_i\vert \PP^r}([p_j\posmod C])_{p_j\in \ell_i}$, so we have an exact sequence
\[
0\ra N_{\ell_i\vert \PP^r}\ra N_{C'\vert \PP^r}\vert_{\ell_i}\xrightarrow{\rho} \bigoplus_{p_j\in \ell_i} \OO_{p_j}\ra 0
\]
Given $S\vert_{\ell_i}\subset N_{C'\vert \PP^r}\vert_{\ell_i}$, the preimage of $S\vert_{\ell_i}$ in $N_{\ell_i\vert \PP^r}\cong \OO(1)^{\oplus n-1}$ has degree at most $s$. For $1\leq j \leq N$, define $\epsilon_{j}$ by
\[
\epsilon_{j}=\twopartdef{1}{\rho:S\vert_{\ell(p_j)} \ra \OO_{p_j}\text{ nonzero} }{0}{\text{otherwise}}
\]
We have
\[
\deg(S\vert_{\ell_i}) \leq r+ \sum_{p_j\in \ell_i} \epsilon_j
\]
Likewise, for $1\leq j\leq N$, we set 
\[
\gamma_j = \twopartdef{0}{S\vert_{\ell(p_j)}\vert_{p_j}=S\vert_{C}\vert_{p_j}}{1}{\text{otherwise}}.
\]
We observe that if $S\vert_{\ell(p_j)}\vert_{p_j}=S\vert_C\vert_{p_j}$, then $\delta_{j}=\epsilon_{j}$. Hence,  for each $1\leq j\leq N$, we have $\delta_j+\gamma_j \geq \epsilon_j$. Set $\delta= \sum_{i=1}^N \delta_i$, $\epsilon = \sum_{j=1}^N \epsilon_j$, and $\gamma = \sum_{j=1}^N \gamma_j$, so in combination, we have $\delta + \gamma \geq \epsilon$.

 Since $S$ has degree at most
$
a+se+\epsilon,
$
the degree of $S$ is at most
\[
a+se+\delta+\gamma,
\]
and we have
\[
\mu^{adj}_{C'}(S)\leq \frac{1}{s}(a+se+\delta).
\]
We now bound $a+\delta$ from above.
Since $N'$ is (semi)stable of slope $\frac {\deg(N_{C\vert \PP^r})-N(r-3)}{r-1}$, we have 
\[
\frac{a-Ns+\delta}{s} \leqor \frac {\deg(N_{C\vert \PP^r})-N(r-3)}{r-1},
\]
so
\[
\frac{a+se+\delta}{s} \leqor \frac {\deg(N_{C\vert \PP^r})+2N+e(r-1)}{r-1}.
\]
The right-hand side of the above is exactly $\mu(N_{C'\vert \PP^r})$, so we have
\[
\mu^{adj}_{C'}(S) \leqor \mu(N_{C'\vert\PP^r}).
\]

\end{proof}

Applying this lemma, we can attach a network of lines to a BN-curve and retain (semi)stability of normal bundles. More precisely, we have the following.
\begin{proposition}\label{addingLines}
If $(d,g,r)$ is BN (semi)stable and $a,b,c$ are positive integers satisfying $a+b=c(r-1)$ and $b \leq \min(a,\binom{c}{2})$ then $(d+a,g+b,r)$ is BN (semi)stable.
\end{proposition}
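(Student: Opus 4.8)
The plan is to attach to the general BN-curve $C$ of type $(d,g,r)$ a pairwise disjoint collection of $b$ lines that are $2$-secant to $C$ together with $a-b$ lines that are $1$-secant to $C$, obtaining a nodal curve $C'$, and then to deduce (semi)stability of $N_{C'\vert\PP^r}$ from Lemma \ref{lines}. First I would settle the numerics and the BN property. Each line has degree $1$, so $\deg C' = d+a$, and a $k$-secant line raises the arithmetic genus by $k-1$, so the $2$-secant lines contribute $b$ and the $1$-secant lines contribute $0$, giving genus $g+b$. Attaching the lines one at a time, each new line is at most $2$-secant to the curve built so far (the lines are disjoint, so a new line meets only $C$), so Lemma \ref{reducibleBNCurves}(1) with $e=1$ and $\alpha\le 2<e+2$ shows inductively that $C'$ is a BN-curve of type $(d+a,g+b,r)$. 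The total number of nodes is $M=(a-b)+2b=a+b=c(r-1)$.

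By Lemma \ref{lines}, it then suffices to prove that the modification bundle $N:=N_{C\vert\PP^r}[2p_1\posmod \ell(p_1)]\cdots[2p_M\posmod \ell(p_M)]$ is (semi)stable, where $p_1,\dots,p_M$ are the nodes. I would establish this by degeneration. Choose $c$ general points $x_1,\dots,x_c\in C$ and a simple graph $G$ on them with exactly $b$ edges and all vertex-degrees at most $r-1$; such a $G$ exists because $b\le\binom{c}{2}$ makes $b$ distinct pairs available, while $b\le a$ forces $2b\le c(r-1)$, so the total degree budget $c(r-1)$ suffices. Assign the $b$ $2$-secant lines to the edges of $G$, and distribute the $a-b$ $1$-secant lines among the vertices so that each $x_t$ receives exactly $r-1$ incidences; this is possible since $x_t$ needs $r-1-\deg_G(x_t)\ge 0$ additional incidences and these total $c(r-1)-2b=a-b$. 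Now specialize the configuration so that the two endpoints of the line assigned to an edge $(x_s,x_t)$ approach $x_s$ and $x_t$ respectively, and the attachment point of each $1$-secant line approaches its assigned vertex, keeping the $1$-secant directions general. By the specialization in Step 2 of the proof of Proposition \ref{prop-g1distinct}, realized as a flat limit of bundles via Lemma \ref{lem-line}, the $r-1$ modifications concentrating at each $x_t$ collapse to a clean twist, so $N$ degenerates to $N_{C\vert\PP^r}(2x_1+\cdots+2x_c)$.

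Since $N_{C\vert\PP^r}(2x_1+\cdots+2x_c)$ is a line-bundle twist of $N_{C\vert\PP^r}$, which is (semi)stable because $(d,g,r)$ is a (semi)stable triple, Proposition \ref{prop:stab-open} shows that the general fiber $N$ is (semi)stable. Feeding this back into Lemma \ref{lines} gives that $N_{C'\vert\PP^r}$ is (semi)stable, which is exactly the assertion that $(d+a,g+b,r)$ is a (semi)stable triple.

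The main obstacle is the concentration step: to obtain the clean twist $N_{C\vert\PP^r}(2x_t)$ at each bunch one needs the $r-1$ limiting normal directions at $x_t$ to be linearly independent. The directions contributed by the $2$-secant edges at $x_t$ are the secant directions $\overline{x_t x_s}$, and two $2$-secant lines sharing both endpoints would limit to the same direction; this is precisely what forces $G$ to be simple and hence forces $b\le\binom{c}{2}$. The remaining slots at each $x_t$ are filled by general $1$-secant directions. Verifying that these $\le r-1$ secant directions, together with the freely chosen $1$-secant directions, span $N_{C\vert\PP^r}\vert_{x_t}$ for a general choice of $x_1,\dots,x_c$ — and that the family of modification bundles is flat with the asserted special fiber — is the technical heart of the argument.
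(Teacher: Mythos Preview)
Your approach is essentially the paper's: attach $b$ two-secant and $a-b$ one-secant lines to $C$, reduce via Lemma~\ref{lines}, then specialize the attachment points onto $c$ general points of $C$ so that the $r-1$ directions at each point (together with the tangent) span $\PP^r$, yielding $N_{C\vert\PP^r}(2x_1+\cdots+2x_c)$ as the limit; your graph framework makes the role of $b\le\binom{c}{2}$ explicit. One small quibble: the appeal to Lemma~\ref{lem-line} is misplaced (that lemma concerns blowing down $1$-secant lines on the total space of a family, a different situation); what you actually need here is the elementary fact that $r-1$ positive modifications at a point $p$ in directions spanning $N_{C\vert\PP^r}\vert_p$ amount to the twist by $2p$.
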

\begin{proof}
Let $C'$ be a union of a general BN-curve $C$ of triple $(d,g,r)$ and $a$ lines $\ell_1,\ldots,\ell_{a}$, $b$ of which $\ell_1,\ldots, \ell_b$ are 2--secant lines to $C$ through the general points $p_1,q_1, \ldots, p_b,q_b$, respectively, and $\ell_{b+1},\ldots,\ell_a$ are general 1-secant lines to $C$ through the general points $p_{b+1},\ldots, p_a$, respectively. By Lemma \ref{lines}, $N_{C'\vert \PP^r}$ is (semi)stable if the  bundle \[N\cong N_{C\vert \PP^r}[2p_1\posmod \ell_1] \cdots[2p_a\posmod \ell_a] [2q_1\posmod \ell_1]\cdots[2q_b\posmod \ell_b]\] is (semi)stable. Pick $c$ general points $\Gamma :=\{r_1,\ldots,r_c\}$ on $C$. We first degenerate the 2-secant lines to pass through distinct pairs of $\Gamma$.  Then we degenerate the 1-secant lines so that  $r-1$ lines are incident at each point of $\Gamma$ and at each point $r_i$ the lines and the tangent line to $C$ at $r_i$ span $\PP^r$. Then the limit of $N$ is  the bundle $N_{C\vert \PP^r}(2r_1+\cdots+2r_c)$. Since $N_{C\vert \PP^r}$ is (semi)stable, this degeneration of $N$ is also (semi)stable. By Lemma \ref{lines}, we conclude that $N_{C'|\PP^r}$ is (semi)stable. 

By Lemma \ref{reducibleBNCurves}, $C'$ is a BN-curve and corresponds to the triple $(d+a,g+b,r)$. We conclude that if $(d,g,r)$ is BN (semi)stable, then  $(d+a,g+b,r)$ is BN (semi)stable.
\end{proof}

\subsection{Degenerating genus 2 curves}
A semistable bundle whose degree and rank are coprime is stable. When $r$ is odd,  $N_{C\vert \PP^r}$ has even degree and rank, so stability requires more work. In this section, we prove Proposition \ref{genus2Stab}, which establishes the stability of normal bundles to general genus 2 curves of sufficiently high degree,  by specializing a genus 2 curve $C$ to a general union of two 1-secant genus 1 curves $C_1 \cup C_2$ whose degrees are chosen so $N_{C_1\cup C_2\vert \PP^r}\vert_{C_1}$ is stable and $N_{C_1\cup C_2\vert \PP^r}\vert_{C_2}$ is semistable.

\begin{proposition}\label{prop-g1mod}
Let $C$ be a general nondegenerate genus 1 curve of degree $d$ in $\PP^r$, $r \geq 4$. Let $p_1, \dots, p_t \in C$ be general points and $q_1, \dots, q_t \in \PP^r$ be general points. Then $N_{C\vert \PP^r}[p_1\posmod q_1]\cdots [p_t \posmod q_t]$ is semistable.
\end{proposition}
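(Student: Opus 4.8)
The plan is to show that each modification $[p_i\posmod q_i]$ is a positive elementary modification towards a \emph{general} one–dimensional direction, and that such modifications cannot destabilize the semistable bundle we start from. First recall that $N:=N_{C\vert\PP^r}$ is semistable by Theorem \ref{genus1} (note $d\geq r+1$ by nondegeneracy), of slope $\mu(N)=\frac{d(r+1)}{r-1}$. Since each $q_i$ is a general point of $\PP^r$, we have $N_{C\to q_i}\cong\OO_C(1)$, so $[p_i\posmod q_i]$ is the positive modification of $N$ at $p_i$ towards the line $N_{C\to q_i}\vert_{p_i}\subset N\vert_{p_i}$; as $q_i$ ranges over $\PP^r$ this direction (the image of the line $\overline{p_iq_i}$ in the normal space) sweeps out a dense subset of $\PP(N\vert_{p_i})$, so we are modifying towards $t$ general directions at $t$ general points. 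Each such modification raises the degree by one, so $N':=N[p_1\posmod q_1]\cdots[p_t\posmod q_t]$ has rank $r-1$, degree $d(r+1)+t$, and slope $\mu(N')=\mu(N)+\frac{t}{r-1}$. As all modifications are positive we have an inclusion $N\subseteq N'$ that is an isomorphism away from $\{p_1,\dots,p_t\}$.

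Next I would bound subbundles of $N'$ against subbundles of $N$. Given a subbundle $S\subseteq N'$ of rank $s$, let $\bar S\subseteq N$ be the saturation of $S\cap N$. Off the points $p_i$ the two sheaves agree, and the only way $S$ can have larger degree than $\bar S$ is by absorbing modification directions, so
\[
\deg(S)\;\le\;\deg(\bar S)+\#\{\,i: N_{C\to q_i}\vert_{p_i}\subseteq \bar S\vert_{p_i}\,\}.
\]
Since $N$ is semistable, $\deg(\bar S)\le s\,\mu(N)$, and the entire problem reduces to showing that for general choices of the $(p_i,q_i)$ no subbundle $\bar S\subsetneq N$ absorbs enough directions to push $\mu(S)$ past $\mu(N')=\mu(N)+\frac{t}{r-1}$.

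The key structural input is that, because $C$ is a \emph{general} elliptic curve, $N$ is not merely semistable but polystable with pairwise non-isomorphic stable Jordan--H\"older factors; this follows from Atiyah's classification \cite{Atiyah} together with the determinant/translation argument already used in the proof of Theorem \ref{genus1} and in Proposition \ref{prop-g1distinct} (in the exceptional characteristic~$2$, odd~$r$ case one must instead work with the indecomposable rank~$2$ factors appearing there). Consequently the subbundles of $N$ of maximal slope $\mu(N)$ are exactly the finitely many partial direct sums of the stable factors. For general directions at general points this finite list absorbs \emph{no} modification, since each incidence $N_{C\to q_i}\vert_{p_i}\subseteq\bar S\vert_{p_i}$ is a proper (codimension $r-1-s$) condition, generically avoided; such a maximal $\bar S$ then gives $\deg(S)\le s\,\mu(N)<s\,\mu(N')$. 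For subbundles of slope strictly below $\mu(N)$ one combines the slope deficit with a count of how many of the $t$ general incidence conditions a moving family of subbundles can simultaneously satisfy: each imposed incidence cuts the dimension of the family of eligible $\bar S$ by $r-1-s$, bounding the number of directions any such $\bar S$ can absorb and yielding $\mu(S)\le\mu(N')$ in the remaining ranks.

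It is convenient to organize this with a degeneration. Specializing the modifications so that $r-1$ of them collide at a common point with directions spanning the fiber realizes the twist $N(x)$; hence when $r-1\mid t$ the bundle $N'$ degenerates to $N(x_1+\cdots+x_{t/(r-1)})$, which is semistable because $N$ is, and openness of semistability (as in Proposition \ref{prop:stab-open}) settles this case with no further work. For general $t$ the same grouping reduces the problem to a \emph{single} positive modification of the polystable bundle $N(D)$ towards a general $t'$–dimensional subspace with $0<t'<r-1$, isolating the difficulty in one step where the quantitative slope inequality $\frac{\max(0,s+t'-(r-1))}{s}\le\frac{t'}{r-1}$ can be checked directly. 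The main obstacle is exactly this uniform control of the absorbed modifications across \emph{all} subbundles — equivalently, bounding the dimensions of the families of subbundles of $N$ of each slope and verifying that the general modification data is in general position with respect to every one of them — with the distinctness of the stable factors of $N$ (and of its twist) being the input that makes the top–slope contribution finite and therefore harmless.
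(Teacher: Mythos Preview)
Your strategy shares the paper's underlying idea---controlling how much of a general modification a subbundle can absorb---but the paper organizes it as an induction on $t$, handling one modification at a time, whereas you attempt to bundle the modifications together via a degeneration to $N(D)$ plus a single residual modification at one point. That reorganization is where the gaps appear.

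First, your structural input is too strong. You assert that $N$ is polystable with pairwise non-isomorphic Jordan--H\"older factors, citing Theorem~\ref{genus1} and Proposition~\ref{prop-g1distinct}. But Proposition~\ref{prop-g1distinct} only applies when $r-1\mid 2d$, and the argument in the proof of Theorem~\ref{genus1} leaves open the possibility that the factors pair up two by two. The paper does \emph{not} assume the factors of $N$ are distinct: for the first modification ($t_0=1$) it explicitly treats both the distinct case and the paired case (using that in the latter the stable factors have rank $\geq 2$, so a diagonal copy of $W_i$ cannot contain a general vector in $W_i\oplus W_i$); for later modifications ($t_0>1$) it argues separately that a \emph{general} single modification of $N_0$ can be taken to be polystable with distinct factors, since every such bundle receives an injection from $N_0$ and hence arises as some modification. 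Your sketch skips this entire maneuver.

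Second, the residual step is incomplete. After degenerating to $N(D)[p\posmod W]$ with $W$ a general $t'$-dimensional subspace, you check the inequality $\tfrac{\max(0,\,s+t'-(r-1))}{s}\le\tfrac{t'}{r-1}$, which governs subbundles $\bar S\subset N(D)$ of \emph{maximal} slope (those are finitely many if the factors are distinct, so their fibers at $p$ are in general position with respect to $W$). But subbundles of slope strictly below $\mu(N(D))$ move in positive-dimensional families, so for some member $\bar S$ the fiber $\bar S|_p$ may meet $W$ in larger than expected dimension; you say the slope deficit compensates but never carry out the count. The paper's induction sidesteps this precisely because adding one modification at a time forces a potential destabilizer $S'\subset N_2$ to come from a subbundle $S\subset N_1$ of a \emph{single} determined degree $e$ (the inequalities $\mu(N_1)\ge\mu(S)$ and $\mu(S')>\mu(N_2)$ pin $e$ down). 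One then uses $\Ext^1(S,N_1)=0$ to get $\hom(S,N_1)=\chi(S,N_1)$ and bounds the dimension of the locus in $N_1|_p$ swept out by all such $S$ by $\delta+s\le r-2$, so a general direction is missed. Your all-at-once approach would require an analogous bound for each rank $s$ and each intersection dimension with $W$, which you do not provide.
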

\begin{proof}
    We will prove the proposition by induction on the number of elementary modifications. The base case $t=0$ is Theorem \ref{genus1}. Suppose the proposition holds for $t<t_0$. Let $N_1 := N_{C|\PP^r}[p_1\posmod q_1]\cdots [p_{t_0-1} \posmod q_{t_0-1}]$. For ease of notation, set $p= p_{t_0}$ and $q=q_{t_0}$. Let $N_2 := N_1 [p \posmod q]$. By induction, $N_1$ is semistable. We want to show that $N_2$ is semistable. Suppose that $N_2$ is destabilized by a subbundle $S'$ of rank $s$ and degree $e+1$. Then $S'$ is an extension of a subbundle $S$ of $N_1$ of rank $s$ and degree $e$ by $\OO_p$. By comparing slopes, we have
    $$\mu(N_1) = \frac{d(r+1) + t_0 -1}{r-1} \geq \mu(S) = \frac{e}{s}$$
    $$\mu(S') = \frac{e+1}{s} > \mu(N_2) = \frac{d(r+1) + t_0}{r-1}.$$ Combining and multiplying by $s(r-1)$, we have
    $$ds(r+1)+st_0 -s \geq e(r-1) > ds(r+1)+st_0 -r+1.$$ Since the difference between the left and right hand sides of the inequality is less than $r-1$, there can be at most one integer $e$ satisfying this inequality. Set $\delta = 2ds+st_0 -s \mod{r-1}$. Then $e= ds + \frac{2ds+st_0 - s- \delta}{r-1}$ and $e$ can only take this value if $\frac{2ds+st_0 - s- \delta}{r-1} + 1 > \frac{2ds+st_0}{r-1}$, equivalently, if $\delta< r-1-s$.

    We will show that as we vary $S\subset N_1$ over all bundles with the given discrete invariants $(s,e)$,  $S\vert_p$ cannot contain a general element of $N_1$. Consequently, if $q$ is general, the inclusion $S\hookrightarrow N_1$ must fail to drop rank at $p$, and hence cannot induce the desired inclusion $S'\hookrightarrow N_2$. 
    
    First suppose that $\delta\geq 1$. Set $z=\gcd(e,s)$, and suppose $S\cong S_1\oplus \cdots \oplus S_c$, where each $S_i$ is an indecomposable extension of $z_i$ copies of the same bundle $E_i$. The numbers $c$ and $z_i$ are fixed as $q$ varies in an open set, and the moduli space of indecomposable bundles of rank $\frac{z_i s}{z}$ and degree $\frac{z_i e}{z}$ has dimension 1. Since $N_1$ is semistable with $\mu(N_1) > \mu(S_i)$, we have $\Ext^1(S_i, N_1)=0$. Consequently, $\hom(S_i, N_1)=\chi(S_i, N_1)=\frac{\delta z_i}{z}.$ The set of all possible images of $S_i$ in the vector space $N_1\vert_p$ has dimension at most $\frac{(\delta+s) z_i}{z}-1$, and hence the image of all indecomposable semistable bundles of the same rank and degree as $S_i$ has dimension at most $\frac{(\delta+s) z_i}{z}$. Combining, the set of all images of a vector bundle that decomposes like $S$ in $N_1$ has dimension at most $\delta+s$, which is  bounded above by $r-2$. Hence, if $v\in N_1\vert p$ is general, no $S$ decomposing in the same way as the $S$ here can contain $v$.

    We now suppose $\delta=0$. In this case, $S\subset N_1$ has slope $\mu(N_1)$. We decompose $N_1\cong M_1\oplus \cdots \oplus M_c$, where each $M_i$ is an iterated extension of a stable bundle $W_i$. This decomposition is unique up to reordering since $\Hom(M_i, M_j)=0$ for $i\neq j$. Any subbundle $S\subset N_1$ of slope $\mu(N_1)$  splits as $S_1\oplus \cdots \oplus S_c$, where $S_i\subset M_i$ is a subbundle of slope $\mu(N_1)$.  
    
    If $t_0 -1\geq 1$, we claim that we may assume that every $M_i$ is isomorphic to $W_i$, or equivalently, $N_1$ is polystable with no repeated stable factors. Let $N_0$ be the bundle with the first $t_0 -2$ modifications. Since $\mu(N_0) < \mu(N_1)$, $N_0$ admits a nonzero homomorphism to every polystable bundle with the same slope as $N_1$. Hence, $N_0$ admits a nonzero homomorphism to every polystable bundle $N_1'$ with $\mu(N_1') = \mu(N_1)$ which is the direct sum of distinct stable bundles. Since the homomorphism $N_0 \to N_1$ is injective, the general homomorphism to $N_1'$ is also injective. Hence, $N_1'$ appears as an elementary modification of $N_0$. Since $N_1$ is a general elementary modification of $N_0$, we conclude that we may assume that $N_1$ is a direct sum of distinct stable bundles.

    If $t_0=1$,
by the proof of Theorem \ref{genus1}, either every $M_i$ is isomorphic to $W_i$, or every $M_i$
 is an extension---possibly trivial---of $W_i$ by $W_i$. Since $S$ is a proper subbundle of $N_1$, at least one of the $S_i$ is a proper subbundle of $M_i$. Assume   $S_1\subset M_1$ is a proper subbundle. 

If $M_i\cong W_i$ for all $i$, then $S_1=0$, and even as $S$ varies, its restriction to $p$ cannot contain a general $v\in N_1\vert_p$. 

If each $M_i$ is the nontrivial extension of $W_i$ by $W_i$, then $\hom(W_1, M_1)=1$. The only possible choices for a proper $S_1\subset M_1$ are 0 or $W_1$.  In the first case, the restriction of $S$ to $p$ cannot contain a general $v\in N_1\vert_p$. In the latter case, $S_1\subset M_1$ is canonical, and if $v_1\in M_1\setminus W_1$, $S$ cannot contain any $v\in N_1\vert_p$ whose projection to $M_1$ is $v_1$.

If  each $M_i\cong W_i \oplus W_i$, then by Proposition \ref{prop-g1distinct} we have $\rk(W_i)\geq 2$. Hence, we have either $S_1\cong W_1$ or $S_1=0$. In the latter case, we once again have that $S\vert_p$ cannot contain any element of $N_1\vert_p$ that has a nonzero $M_1$ component. In the former case, let $w_1,w_2$ be linearly independent vectors in $W_1\vert p$; then $S$ cannot contain any vector whose restriction to $M_1\vert_p\cong (W_1\oplus W_1)\vert_p$ is $(w_1,w_2)$. In either case, we cannot extend $\OO_p$ by a bundle in the same family as $S$ to get a destabilizing subbundle of $N_2$ for $q$ general.
\end{proof}

\begin{proposition}\label{genus2Stab}
Let $d$ be an integer such that there exist integers $d_1,d_2\geq r+1$ such that $d=d_1+d_2$, $\gcd(r-1, 2d_1+1)=1$. Then $(d,2,r)$ is BN stable.
\end{proposition}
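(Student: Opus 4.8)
The plan is to degenerate the general genus $2$ curve to a nodal union of two elliptic curves and to read off stability from the two restrictions, exactly as announced before the statement. Concretely, let $C_1$ and $C_2$ be general nondegenerate genus $1$ curves in $\PP^r$ of degrees $d_1$ and $d_2$ respectively, meeting quasitransversely at a single point $p$. Since $d_1, d_2 \geq r+1 \geq 5$, each $C_i$ is a genus $1$ BN-curve, and the single node satisfies $1 < \max(d_1, d_2)$, so Proposition \ref{230} shows that $C := C_1 \cup C_2$ is a BN-curve. Its arithmetic genus is $1 + 1 + 1 - 1 = 2$ and its degree is $d_1 + d_2 = d$, so $C$ is a limit of smooth genus $2$ degree $d$ BN-curves. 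By Proposition \ref{prop:stab-open} it therefore suffices to prove that $N_{C|\PP^r}$ is stable, and by Lemma \ref{lem:naive} this follows once we show that $N_{C|\PP^r}|_{C_1}$ is stable and $N_{C|\PP^r}|_{C_2}$ is semistable.

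The crux is the restriction to $C_1$. By the Hartshorne--Hirschowitz lemma, $N_{C|\PP^r}|_{C_1} \cong N_{C_1|\PP^r}[p \posmod C_2]$, a single positive elementary modification of the genus $1$ normal bundle at the general point $p$ toward the normal direction $T_p C_2$. Since $C_2$ is general, $T_p C_2$ is a general direction, and as $q$ ranges over $\PP^r$ the line $N_{C_1 \to q}|_p$ likewise sweeps out a general direction in the fiber; hence this modification agrees with $N_{C_1|\PP^r}[p \posmod q]$ for general $q$, and Proposition \ref{prop-g1mod} (with $t = 1$) shows it is semistable. I then compute its numerical invariants: the rank is $r-1$ and, since a single positive modification at one point toward a one-dimensional subspace raises the degree by exactly one, the degree is $d_1(r+1) + 1 \equiv 2d_1 + 1 \pmod{r-1}$. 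The hypothesis $\gcd(r-1, 2d_1 + 1) = 1$ is precisely the statement that this degree is coprime to the rank, so the semistable bundle $N_{C|\PP^r}|_{C_1}$ is in fact stable.

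The restriction to $C_2$ is handled identically but more cheaply: $N_{C|\PP^r}|_{C_2} \cong N_{C_2|\PP^r}[p \posmod C_1]$ is again a single positive modification of a genus $1$ normal bundle toward a general direction, hence semistable by Proposition \ref{prop-g1mod}, and here no coprimality is needed. Combining, Lemma \ref{lem:naive} yields that $N_{C|\PP^r}$ is stable, and Proposition \ref{prop:stab-open} propagates this to the general smooth genus $2$ curve of degree $d$, proving that $(d, 2, r)$ is BN stable.

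I expect the only genuinely delicate point to be the identification of the modification $N_{C_1|\PP^r}[p \posmod C_2]$, defined via the tangent direction $T_p C_2$ of the attached curve, with the pointing-bundle modification $[p \posmod q]$ appearing in Proposition \ref{prop-g1mod}: one must check that a general tangent direction of $C_2$ is realized as $N_{C_1 \to q}|_p$ for general $q$, so that the genericity hypotheses of Proposition \ref{prop-g1mod} genuinely apply. Everything else is bookkeeping of degrees and ranks, arranged so that the coprimality hypothesis is exactly what upgrades semistability to stability on the single component $C_1$.
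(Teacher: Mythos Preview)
Your proof is correct and follows exactly the paper's approach: degenerate to a one-nodal union of two general elliptic curves, apply Proposition~\ref{prop-g1mod} to get semistability of each restriction, and use the coprimality hypothesis on $C_1$ to upgrade to stability via Lemma~\ref{lem:naive}. Your worry in the last paragraph is easily resolved, since for a general $q\in\PP^r$ the fiber $N_{C_1\to q}|_p$ is the image in $N_{C_1|\PP^r}|_p$ of the line $\overline{pq}$, which sweeps out all normal directions as $q$ varies.
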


\begin{proof}
We degenerate $C$ to two general genus 1 curves $C_1$, $C_2$ of degree $d_1$ and $d_2$, respectively, intersecting in a single point. We choose the degrees $d_1$ and $d_2$ such that the hypotheses of this proposition hold for them. Then $N_{C_1\cup C_2\vert \PP^r}\vert_{C_1}$ and $N_{C_1\cup C_2\vert \PP^r}\vert_{C_2}$ are both semistable by Proposition \ref{prop-g1mod}. In addition, $N_{C_1\cup C_2\vert \PP^r}\vert_{C_1}$ has degree $d_1(r+1)+1$, which is coprime to its rank $r-1$ by hypothesis. Hence, $N_{C_1\cup C_2\vert \PP^r}\vert_{C_1}$  is stable. By Lemma \ref{lem:naive},  $N_{C_1\cup C_2\vert \PP^r}$ is  stable. Finally, $C_1\cup C_2$ is a BN-curve by Proposition \ref{230}, so $(d,2,r)$ is BN stable. 
\end{proof}
\begin{definition}\label{definition-b2r}
Given a positive integer $r$, let $b_2(r)$ be the least integer such that for all $d\geq b_2(r)$, there exist positive integers $d_1,d_2$ such that 
$d= d_1 + d_2$, $d_1, d_2 \geq r+1$ and $\gcd(r-1,2d_1+1)=1$ 
\end{definition}
Proposition \ref{genus2Stab} gives that $(d,2,r)$ is BN stable for $d\geq b_2(r)$. The integer $b_2(r)$ is typically close to $2r+2$, and we can precisely bound it in all cases.

\begin{proposition}\label{b2Bounds} Let $r \geq 4$. 
\begin{enumerate}
\item If $5 \nmid r-1$, then $b_2(r)=2r+2$.

\item Let $p \geq 5$ be the smallest prime which does not divide $r-1$. Then $b_2(r) \leq 2r+\frac{p-1}{2}$.

\item For any $r\geq 8$ even, $b_2(r)\leq \frac{5r-6}{2}$. For any $r\geq 8$ odd, $b_2(r)\leq \frac{5r-3}{2}$.

\item $b_2(r)=2r+O(\log(r))$
\end{enumerate}
\end{proposition}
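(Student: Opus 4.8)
The plan is to reduce the determination of $b_2(r)$ to a single arithmetic quantity and then read off all four bounds from it. First I would reparametrize: writing $d_1 = r+1+j$ with $j\ge 0$ and using $2r+3+2j = 2(r-1)+(5+2j)$, one obtains
\[
\gcd(r-1,\,2d_1+1)=\gcd(r-1,\,5+2j).
\]
Hence a valid decomposition $d=d_1+d_2$ with $d_1,d_2\ge r+1$ exists precisely when the interval $[\,r+1,\,d-r-1\,]$ contains some $j$ with $\gcd(r-1,5+2j)=1$. Since this interval has fixed left endpoint $r+1$ and grows to the right as $d$ increases, the condition fails only below a threshold governed by the first good value of $j$. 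Letting $t^\ast\ge 0$ be the least $j$ with $\gcd(r-1,5+2j)=1$ — equivalently, $w^\ast:=5+2t^\ast$ is the smallest odd integer $\ge 5$ coprime to $r-1$ — this yields the clean formula $b_2(r)=2r+2+t^\ast$. The lower bound $b_2(r)\ge 2r+2$ is automatic, because $d_1,d_2\ge r+1$ forces $d\ge 2r+2$.

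Next I would deduce parts (1) and (2) from this formula. The prime $p$ is odd, at least $5$, and coprime to $r-1$, so $w^\ast\le p$, i.e. $t^\ast\le\tfrac{p-5}{2}$; substituting gives
\[
b_2(r)\le 2r+2+\tfrac{p-5}{2}=2r+\tfrac{p-1}{2},
\]
which is part (2). Part (1) is the special case $5\nmid r-1$: then $w=5$ is already coprime to $r-1$, so $t^\ast=0$ and $b_2(r)=2r+2$, matching the lower bound.

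For part (3) the idea is to exhibit an explicit small odd integer coprime to $m:=r-1$. If $r$ is even then $m$ is odd, and $w=m-4=r-5$ is odd with $\gcd(m,4)=1$; for $r\ge 10$ we then have $w\ge 5$, so $w^\ast\le r-5$ and $t^\ast\le\tfrac{r-10}{2}$, giving $b_2(r)\le\tfrac{5r-6}{2}$. If $r$ is odd then $m$ is even, and $w=m-1=r-2$ is odd with $\gcd(m,m-1)=1$; for $r\ge 7$ we have $w\ge 5$, so $w^\ast\le r-2$ and $t^\ast\le\tfrac{r-7}{2}$, giving $b_2(r)\le\tfrac{5r-3}{2}$. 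Thus the even bound is produced once $r\ge 10$ and the odd bound once $r\ge 7$; the residual small even values must be examined directly.

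Finally, for part (4) I would note that every prime $\ell$ with $5\le \ell<p$ divides $r-1$, so $\prod_{5\le \ell<p}\ell$ divides $r-1$ and in particular $\prod_{5\le \ell<p}\ell\le r-1$. The standard Chebyshev estimate $\prod_{\ell\le x}\ell=e^{(1+o(1))x}$ then forces $p=O(\log r)$, whence $t^\ast\le\tfrac{p-5}{2}=O(\log r)$ and $b_2(r)=2r+O(\log r)$. The only ingredient external to the elementary reduction is this primorial growth estimate, which I regard as the one nontrivial input. The main conceptual obstacle, however, is simply setting up the reduction $b_2(r)=2r+2+t^\ast$ carefully, since once $t^\ast$ is recast in terms of the smallest odd integer coprime to $r-1$, parts (1)--(3) collapse to one-line coprimality checks.
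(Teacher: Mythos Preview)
Your approach is correct and genuinely cleaner than the paper's. The key step is your explicit formula $b_2(r)=2r+2+t^\ast$, where $t^\ast$ is the least $j\ge 0$ with $\gcd(r-1,5+2j)=1$; once this is established, all four parts follow from one-line bounds on $t^\ast$. The paper never isolates this formula and instead argues each part separately: for (1) and (2) it picks specific values of $d_1$ (namely $r+1$ and $r-1+\tfrac{p-1}{2}$); for (3) it uses a counting argument via $\phi(r-1)$ (in the even case) and $\phi(\tfrac{r-1}{2})$ (in the odd case) to locate a small admissible residue, rather than your direct choices $w=m-4$ and $w=m-1$; and for (4) it uses \emph{two} coprime residues and a case split on $d\bmod (r-1)$ that is in fact unnecessary once one has your formula. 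Your route is more uniform and more elementary; the paper's route has the minor advantage that the $\phi$-counting in (3) naturally suggests the asymptotic range, but your explicit witnesses work just as well.

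Two small points. First, a typo: when you write ``the interval $[r+1,d-r-1]$ contains some $j$'', you mean it contains some $d_1=r+1+j$, or equivalently $[0,d-2r-2]$ contains some $j$. Second, your closing remark that ``the residual small even values must be examined directly'' hides a wrinkle: for $r=8$ one has $b_2(8)=2r+2=18$ while $\tfrac{5r-6}{2}=17$, so the stated even bound in (3) actually fails at $r=8$. This is an error in the proposition as stated (the paper's own proof only treats $r\ge 12$ even), not in your method; your argument correctly covers all even $r\ge 10$, which is the true range of validity.
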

\begin{proof}
\textbf{(1)}: Assume $5 \nmid r-1$, then  Proposition \ref{genus2Stab} applies with $d_1=r+1$ since $r-1$ and $2r+3$ are relatively prime. Hence, $b_2(r) = 2r+2$.

\textbf{(2)}: If $p \nmid r-1$ and $p \geq 5$, then Proposition \ref{genus2Stab} applies with $d_1=r-1 + \frac{p-1}{2}$ since $r-1$ and $2r-2 +p$ are relatively prime. Hence, $b_2(r) \leq 2r+\frac{p-1}{2}$.

\textbf{(3)}: Suppose $r\geq 12$ is even.
Let $d'_{1}$ be the least integer $\geq 2$ such that $2d'_1+1$ is coprime to $r-1$. Since $r-1\geq 11$ is odd, we have $\phi(r-1)\geq 8$, so there are at least 4 integers $i$ with $2\leq i\leq r-4$ with $\gcd(2i+1,r-1)=1$. Since replacing $i$ with $r-2-i$ preserves coprimality with $r-1$, there are at least 2 such integers in the range $[2,\frac{r-2}{2})$, and $d'_1\leq \frac{r}{2}-3$. Hence,  given $d\geq \frac{5r-6}{2}$, we take $d_1=d'_1+r-1$.

If $r\geq 9$, $r\neq 13$ is odd, we have $\phi(\frac{r-1}{2})\geq 4$. Let $d_{1,1}<d_{1,2}<\frac{r-1}{2}$ be the least integers at least 2 such that $\gcd(2d_{1,i}+1,\frac{r-1}{2})=1$, and take $d_1=d_{1,1}+r-1$ unless $d\equiv d_{1,1}\pmod{r-1}$, in which case take $d_1=d_{1,2}+r-1$. This works for any $d\geq d_{1,2}+2r$, hence for any $d\geq \frac{5r-3}{2}$. 

\textbf{(4)}: Let $p_1<p_2$  be the smallest positive integers  such that $\gcd(2p_1+3,r-1)=\gcd(2p_2+3,r-1)=1$. Applying the previous lemma with $d_1=p_1+r$, unless $d\equiv p_1+1 \pmod r-1$, in which case we take $d_1=p_2+r$, we have that $b_2(r)\leq p_2+2r+1$

 The integers $2p_1+3$ and $2p_2+3$ are bounded above by the two smallest primes at least 5 that fail to divide $r-1$. By the Prime Number Theorem, both such primes are $O(\log(r))$, so $b_2(r)-2r=O(\log(r))$ as well.
\end{proof}

\begin{remark}\label{rem-be}
    Using more precise number theoretic estimates one can improve Proposition \ref{b2Bounds} (3) significantly. For example, if $x \geq 3275$, then there exists a prime number $p$ such that $x \leq p < 1.01 x$ \cite[\S 4]{dusart}. Hence, if $r \geq 1636$, there exists a prime number $p$ such that $2r+3 \leq p < 2.02 r + 3.03$. We can then take $d_1 = \frac{p-1}{2}$ and see that $b_2(r) \leq 2.01 r + 2.015$.
\end{remark}

\subsection{Configurations of elliptic curves} In this section, we will use configurations of elliptic curves to extend semistability to higher genus curves (see \cite{br99} for a similar construction).

For $e \geq r+1$, set  $$u_e = \left \lfloor \frac{(r+1)e}{2(r-1)} \right \rfloor \quad \mbox{and} \quad u= u_{r+1}.$$ 
Let $E_0$ be an elliptic curve of degree $e_0 \geq r+1$. For $1 \leq i \leq I$, let $r+1 \leq e_i$ be integers and let  $0< j_i \leq \min(u_{e_i}, u_{e_0})$ be positive integers.  Let $p_{i, 1}, \dots, p_{i, j_i}$ be general points on $E_0$ for $1 \leq i \leq I$ and let $E_i$ be a general elliptic curve of degree $e_i$ meeting $E_0$ quasitransversely at $p_{i,1}, \dots, p_{i, j_i}$. Let $C$ be the union of $E_0$ with $E_i$ for $1 \leq i \leq I$. By Proposition \ref{230} and Corollary \ref{cor-g1bn}, C is a BN-curve of degree $ \sum_{i=0}^I e_i$ and genus $1 +\sum_{i=1}^I j_i$.

\begin{proposition}\label{prop-g1config}
    The normal bundle of $C$ is semistable. Furthermore, if $r-1$ is relatively prime to one of   $2e_0 + \sum_{i=1}^I j_1$, or $2e_i+j_i$  for some $1 \leq i \leq I$, then the normal bundle of $C$ is stable.
\end{proposition}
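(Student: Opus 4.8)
The plan is to verify the hypotheses of the gluing Lemma~\ref{lem:naive} one component at a time, deducing everything from the semistability of modified normal bundles of elliptic curves supplied by Proposition~\ref{prop-g1mod}. Write $V := N_{C\vert\PP^r}$. First I would use the Hartshorne--Hirschowitz formula \cite[Corollary 3.2]{hartshornehirschowitz} to identify the restrictions of $V$ to the smooth components. Since each $E_i$ with $i\ge 1$ meets the rest of $C$ exactly along the $j_i$ nodes $p_{i,1},\dots,p_{i,j_i}$ lying on $E_0$, the formula gives
\[
V\vert_{E_i}\cong N_{E_i\vert\PP^r}[p_{i,1}\posmod E_0]\cdots[p_{i,j_i}\posmod E_0],
\]
a bundle of degree $e_i(r+1)+j_i$ and rank $r-1$, while restricting to the central curve yields
\[
V\vert_{E_0}\cong N_{E_0\vert\PP^r}[p_{1,1}\posmod E_1]\cdots[p_{I,j_I}\posmod E_I],
\]
obtained by $\sum_i j_i$ positive elementary modifications, so of degree $e_0(r+1)+\sum_i j_i$.

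The second step is to show each restriction is semistable. The modification at a node $p_{i,k}$ is taken toward the one–dimensional subspace spanned by the image of $T_{p_{i,k}}E_i$ in the normal fiber; because the $E_i$ are general elliptic curves attached at general points, these directions are general and mutually independent, so each restriction is exactly a bundle of the shape treated in Proposition~\ref{prop-g1mod} and is therefore semistable. (Here the bounds $j_i\le\min(u_{e_i},u_{e_0})$ ensure the requisite genericity of the attachments, and that $C$ is a BN-curve, as already noted.) I would then induct on $I$ via Lemma~\ref{lem:naive}: setting $C^{(m)}=E_0\cup E_1\cup\cdots\cup E_m$, the base case $V\vert_{C^{(0)}}=V\vert_{E_0}$ is semistable by the above; and if $V\vert_{C^{(m-1)}}$ is semistable, then since $C^{(m)}=C^{(m-1)}\cup E_m$ with $V\vert_{E_m}$ semistable, Lemma~\ref{lem:naive} gives that $V\vert_{C^{(m)}}$ is semistable. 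Taking $m=I$ shows $N_{C\vert\PP^r}$ is semistable.

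For stability, I would reduce everything to a residue computation modulo $r-1$. Since $e(r+1)\equiv 2e\pmod{r-1}$, the restriction $V\vert_{E_0}$ has degree congruent to $2e_0+\sum_i j_i$ and each $V\vert_{E_i}$ has degree congruent to $2e_i+j_i$, while every restriction has rank $r-1$. If one of these residues is coprime to $r-1$, then the corresponding semistable restriction has coprime degree and rank and is hence stable. To upgrade this to stability of $V$, I would arrange the gluing order so that the distinguished component is handled at a step where its complement is already known semistable: if it is some $E_{i_0}$, reorder the $E_i$ so that $E_{i_0}$ is glued last, whereupon the stability clause of Lemma~\ref{lem:naive} applies against the semistable $V\vert_{C^{(I-1)}}$; if instead it is $E_0$ itself, the stable restriction appears already in the base case and stability then propagates through each subsequent gluing step (a stable piece glued to a semistable one stays stable by Lemma~\ref{lem:naive}).

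The main obstacle I anticipate is precisely the genericity input in the second step: one must confirm that the modification directions arising from the tangent lines of the attached elliptic curves are general enough to invoke Proposition~\ref{prop-g1mod}, i.e. that a general $E_i$ through the general points $p_{i,1},\dots,p_{i,j_i}$ realizes independent general tangent directions at those nodes. This is exactly the condition that the bounds $j_i\le\min(u_{e_i},u_{e_0})$ are designed to guarantee, and it is the step requiring genuine care; the multi-component gluing and the propagation of stability are then routine bookkeeping built on Lemma~\ref{lem:naive}.
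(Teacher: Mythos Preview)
Your approach is essentially the paper's: identify each restriction $V|_{E_i}$ via Hartshorne--Hirschowitz as a sequence of positive elementary modifications of $N_{E_i|\PP^r}$, invoke Proposition~\ref{prop-g1mod} to get semistability of every restriction, glue with Lemma~\ref{lem:naive}, and then use the coprime-rank-degree trick for the stability upgrade. Your inductive gluing through $C^{(m)}$ is a correct way to apply the two-piece Lemma~\ref{lem:naive} to a multi-component configuration; the paper simply asserts the conclusion once all restrictions are semistable.

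The one point you flag but do not resolve is exactly the point the paper spends its effort on: why are the modification directions general? The paper's argument is that semistability of $N_{E|\PP^r}$ (Theorem~\ref{genus1}) gives $H^1\bigl(E,\,N_{E|\PP^r}(-2p_1-\cdots-2p_{u_d})\bigr)=0$, since the twisted bundle remains of nonnegative slope by the definition of $u_d$; this $H^1$-vanishing says that a genus~$1$ curve of degree $d$ can be made to pass through $u_d$ general points of $\PP^r$ with \emph{general tangent lines} at those points. That is precisely what the bound $j_i\le u_{e_i}$ buys, and it is what makes the directions $T_{p_{i,k}}E_0$ general in $N_{E_i|\PP^r}|_{p_{i,k}}$ (and symmetrically the $T_{p_{i,k}}E_i$ general in $N_{E_0|\PP^r}|_{p_{i,k}}$). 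Once you insert this sentence, your proof is complete and matches the paper's.
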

\begin{proof}
By Theorem \ref{genus1}, $N_{E|\PP^r}$ of a general degree $d$ genus 1 BN-curve is semistable. Consequently, $H^1(E, N_{E|\PP^r}(-2p_1 \cdots -2 p_{u_d}))=0$. We conclude that there exists genus 1 curves of degree $d$ passing through $p_1, \dots, p_{u_d}$ general points in $\PP^r$ with general tangent lines at these points.  

For $1 \leq i \leq I$, let $C_i= E_i \cup E_0$. Since the tangent lines at the attaching points are general lines, the bundle $N_{C_i|\PP^r}|_{E_i}$ is a general modification of $N_{E_i|\PP^r}$ and, by Proposition \ref{prop-g1mod}, is semistable. Since $N_{C|\PP^r}|_{E_i} = N_{C_i|\PP^r}|_{E_i}$, we conclude that it is also semistable. Finally, $N_{C|\PP^r}|_{E_0}$ is a general modification of $N_{E_0|\PP^r}$ and is semistable by Proposition \ref{prop-g1mod}. We conclude that $N_{C|\PP^r}$ is semistable.  If $r-1$ and $2e_i + j_i$ are relatively prime, then $N_{C|\PP^r}|_{E_i}$ is stable, so $N_{C|\PP^r}$ is stable. Similarly, if $r-1$ and $2e_0+ \sum_{i=1}^I j_i$ are relatively prime, then  $N_{C|\PP^r}|_{E_0}$ is stable, so $N_{C|\PP^r}$ is stable.
\end{proof}

\begin{corollary}\label{cor-1configgen}
    Let $a_1 \geq a_2 \geq \cdots \geq a_I >0$ be positive integers. Let $$e_i = \left \lceil \frac{2a_i (r-1)}{r+1} \right\rceil.$$ Set $g = 1 + \sum_{i=1}^I a_i$ and $e = e_1 + \sum_{i=1}^I e_i$. Then $(d,g,r)$ is semistable for all $d \geq e$. Furthermore, $(d,g,r)$ is stable if $r-1$ is relatively prime with one of $2e_i + a_i$, for $1 \leq i \leq I$.
\end{corollary}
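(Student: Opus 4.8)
The plan is to obtain the corollary as an immediate application of Proposition \ref{prop-g1config}, the one trick being that the entire range $d \geq e$ is covered by a single family of configurations in which only the degree of the central elliptic curve is allowed to vary. Concretely, I would realize the triple $(d,g,r)$ by a configuration $C = E_0 \cup \bigcup_{i=1}^I E_i$ of exactly the type considered in Proposition \ref{prop-g1config}.

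First I would fix $d \geq e$ and take the central curve $E_0$ to have degree $e_0 := e_1 + (d - e)$, and for each $1 \leq i \leq I$ attach a general elliptic curve $E_i$ of degree $e_i$ meeting $E_0$ quasitransversely in $j_i := a_i$ general points. Since $\sum_{i=1}^I e_i = e - e_1$, the degree of $C$ is $e_0 + \sum_{i=1}^I e_i = (e_1 + d - e) + (e - e_1) = d$, and its genus is $1 + \sum_{i=1}^I j_i = 1 + \sum_{i=1}^I a_i = g$, as desired. It then remains only to check the numerical hypotheses of Proposition \ref{prop-g1config}. The inequality $u_{e_i} \geq a_i$ is precisely what the ceiling defining $e_i$ guarantees: from $e_i \geq \frac{2a_i(r-1)}{r+1}$ we get $\frac{(r+1)e_i}{2(r-1)} \geq a_i$, hence $u_{e_i} = \lfloor \frac{(r+1)e_i}{2(r-1)} \rfloor \geq a_i$. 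Since $a_1 \geq a_i$ forces $e_1 \geq e_i$, and $e_0 \geq e_1$, monotonicity of $e \mapsto u_e$ yields $u_{e_0} \geq u_{e_1} \geq a_1 \geq a_i$, so that $0 < j_i \leq \min(u_{e_i}, u_{e_0})$ for all $i$.

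With these checks in place, Proposition \ref{prop-g1config} gives that $N_{C|\PP^r}$ is semistable, so $(d,g,r)$ is semistable for every $d \geq e$; and if $r-1$ is coprime to $2e_i + a_i = 2e_i + j_i$ for some $1 \leq i \leq I$ --- a condition independent of $d$, since the degrees $e_i$ of the outer curves do not change with $d$ --- then the stability clause of Proposition \ref{prop-g1config} applies and $N_{C|\PP^r}$ is stable. I expect the only genuinely non-automatic point to be the verification that each component has degree at least $r+1$, i.e. $e_1 \geq r+1$ (whence $e_0 \geq e_1 \geq r+1$) and $e_i \geq r+1$ for $i \geq 1$; this is where the size of the $a_i$ enters, and it is the step that most warrants care, the remaining inequalities being forced by the construction. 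The feature that makes the argument uniform in $d$ is that enlarging $e_0$ only enlarges $u_{e_0}$, so no inequality is disturbed as $d$ grows.
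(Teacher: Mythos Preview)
Your approach is identical to the paper's: set $e_0 = d - \sum_{i=1}^I e_i \geq e_1$ and invoke Proposition \ref{prop-g1config} together with Proposition \ref{prop:stab-open}. The paper's proof is a two-sentence sketch and does not check the numerical hypotheses as carefully as you do.

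The concern you flag is real, and it is a gap in both your argument and the paper's: the condition $e_i \geq r+1$ required by Proposition \ref{prop-g1config} does \emph{not} follow from the stated hypotheses. For instance, if some $a_i = 1$ then $e_i = \lceil 2(r-1)/(r+1) \rceil = 2$ for every $r \geq 4$, and there is no nondegenerate smooth elliptic curve of degree $2$ in $\PP^r$, so Proposition \ref{prop-g1config} cannot be invoked. In every place the paper actually uses this corollary (Corollaries \ref{cor-sse} and \ref{cor-se}, and the $\PP^4$ analysis in \S\ref{sec-p4}) the outer curves $E_i$ are chosen with degree at least $r+1$, so the issue does not bite in practice; but the statement as written should either add the hypothesis $e_i \geq r+1$ (equivalently $a_i > \tfrac{r(r+1)}{2(r-1)}$) or redefine $e_i := \max\bigl(r+1,\, \lceil \tfrac{2a_i(r-1)}{r+1} \rceil\bigr)$. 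Note also that Remark \ref{rem-num}(6), which restates this result for later use, phrases the condition as ``$e_i \geq \tfrac{2(r-1)a_i}{r+1}$'' rather than fixing $e_i$ to be the ceiling, implicitly leaving room to take $e_i$ large enough.
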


\begin{proof}
Specialize a general BN-curve of degree $d$ and genus $g$ to a union of elliptic curves $E_0$ of degree $d - \sum_{i=1}^I e_i$ and elliptic curves $E_i$ of degree $e_i$ meeting $E_i$ at $a_i$ general points with general tangent lines at the points of intersection. Since $d - \sum_{i=1}^I e_i\geq e_1$ by assumption, Propositions \ref{prop-g1config} and \ref{prop:stab-open} imply the corollary. 
\end{proof}

\begin{corollary}\label{cor-g2}
Let $r \geq 4$. Then a general  BN-curve of degree $d \geq 2r$ and genus 2 has semistable normal bundle.
\end{corollary}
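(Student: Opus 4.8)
The plan is to split the range $d\ge 2r$ according to whether $d$ is large enough to break the curve into two nondegenerate elliptic pieces. For $d\ge 2r+2$ I would write $d=d_1+d_2$ with $d_1,d_2\ge r+1$ and specialize the general genus $2$ curve to a quasitransverse union $C_1\cup C_2$ of general nondegenerate genus $1$ curves of degrees $d_1,d_2$ meeting at a single point $t$. By Proposition \ref{230} this is a BN-curve, and by the Hartshorne--Hirschowitz formula $N_{C_1\cup C_2\vert\PP^r}\vert_{C_i}$ is a single positive elementary modification of $N_{C_i\vert\PP^r}$ toward the tangent direction of the opposite component at $t$, which is general. Proposition \ref{prop-g1mod} (with one modification) then gives that both restrictions are semistable, Lemma \ref{lem:naive} upgrades this to semistability of $N_{C_1\cup C_2\vert\PP^r}$, and Proposition \ref{prop:stab-open} transfers semistability to the general smooth curve.

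This leaves the two cases $d=2r$ and $d=2r+1$, where no splitting into two nondegenerate elliptic curves exists. For these I would specialize $C$ to $E_0\cup\ell$, with $E_0$ a general nondegenerate genus $1$ curve of degree $d-1\ge r+1$ and $\ell$ a general $2$-secant line meeting $E_0$ quasitransversely at $p,q$; since $\ell$ is a rational normal curve of degree $1$ and $2<1+2$, Lemma \ref{reducibleBNCurves}(1) shows $E_0\cup\ell$ is a BN-curve of type $(d,2,r)$. The restriction $N_{E_0\cup\ell\vert\PP^r}\vert_\ell$ is a twofold positive modification of $N_{\ell\vert\PP^r}\cong\OO_\ell(1)^{\oplus(r-1)}$, a bundle of slope $\tfrac{r+1}{r-1}<2$ that necessarily contains a line subbundle of degree at least $2$ and is therefore unstable, so Lemma \ref{lem:naive} is unavailable. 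Instead I would apply the sharper line-attaching criterion of Lemma \ref{lines}, which reduces the semistability of $N_{E_0\cup\ell\vert\PP^r}$ to that of the doubly modified genus $1$ bundle
\[
B:=N_{E_0\vert\PP^r}[2p\posmod\ell][2q\posmod\ell].
\]

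Proving that $B$ is semistable is the crux, and I expect it to be the main obstacle, since it lies just outside the scope of Proposition \ref{prop-g1mod}. There the modifications are first order and directed toward independent general points of $\PP^r$, whereas in $B$ each modification is taken along the length-two scheme $2p$ (respectively $2q$) and both directions are constrained to lie along the single secant line $\ell$, so they are coupled rather than free. I would attack this by adapting the dimension count in the proof of Proposition \ref{prop-g1mod}: given a putative destabilizing subbundle $S\subset B$, I would record its underlying subbundle of $N_{E_0\vert\PP^r}$ together with the second-order data it must contain over $p$ and $q$, bound the dimension of the family of possible fibers $S\vert_p,S\vert_q$ using the semistability of $N_{E_0\vert\PP^r}$ from Theorem \ref{genus1}, and show that for a general secant line through general points these fibers cannot contain the prescribed secant directions. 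The adjusted-slope inequality $\mu^{\text{adj}}(S)\le\mu(B)$ would then follow as in Proposition \ref{prop-g1mod}, and Lemma \ref{lines} together with Proposition \ref{prop:stab-open} would complete the two remaining cases.
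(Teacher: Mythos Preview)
Your split for $d\ge 2r+2$ is fine and essentially reproduces the semistable half of Proposition~\ref{genus2Stab}. The gap is in the remaining two cases $d\in\{2r,2r+1\}$. You correctly reduce, via Lemma~\ref{lines}, to the semistability of
\[
B=N_{E_0\vert\PP^r}[2p\posmod\ell][2q\posmod\ell],
\]
but you do not prove it. The sketch ``adapt the dimension count of Proposition~\ref{prop-g1mod}'' is not enough: that proposition is an induction in which each step adds a \emph{single first-order} modification toward an \emph{independent general} point of $\PP^r$, and both features are used in the proof (the $\delta=0$ step relies on the freedom to move the direction off any fixed summand). In $B$ the modifications are second order (along $2p$ and $2q$) and are rigidly coupled to the secant $\overline{pq}$; in particular you cannot vary the direction at $p$ without moving $q$, and the double-point structure means the ``one step at a time'' inductive mechanism is unavailable. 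This is the genuine obstacle, and your proposal does not resolve it.

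The paper bypasses the issue by choosing a different rational attachment for \emph{all} $d\ge 2r$ at once: take $C=E\cup R$ with $E$ a general genus~$1$ curve of degree $d-r+1\ge r+1$ and $R$ a general \emph{degree $r-1$} rational normal curve in a hyperplane, $2$-secant to $E$. Then
\[
N_{R\vert\PP^r}\cong \OO_{\PP^1}(r+1)^{\oplus(r-2)}\oplus\OO_{\PP^1}(r-1),
\]
and because the tangent directions of $E$ at the two nodes are transverse to the hyperplane, the two positive modifications raise the $\OO(r-1)$ summand to $\OO(r+1)$, so $N_{C\vert\PP^r}\vert_R\cong\OO_{\PP^1}(r+1)^{\oplus(r-1)}$ is perfectly balanced and Lemma~\ref{lem:naive} applies directly. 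On the elliptic side, the two modifications are first order at distinct general points toward general directions, exactly the situation of Proposition~\ref{prop-g1mod}. Replacing your $2$-secant line by this $2$-secant degree~$(r-1)$ rational curve removes the need for Lemma~\ref{lines} and for any second-order analysis.
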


\begin{proof}
Let $C$ the union of a general elliptic curve $E$ of degree $d-r+1$ with a general 2-secant rational normal curve $R$ of degree $r-1$ contained in a hyperplane. By Lemma \ref{reducibleBNCurves}, this curve is a BN-curve. Then $N_{C|\PP^r}|_R$ is balanced, hence semistable. Since the two tangent lines to the rational normal curve can be chosen arbitrarily, by Proposition \ref{prop-g1mod}, $N_{C|\PP^r}|E$ is semistable. The corollary follows from Proposition \ref{prop:stab-open}.
\end{proof}

We can simplify the numerics at the expense of weakening the bounds.  When $$(k-1)u < g-1 \leq ku,$$ we can take a configuration of $(k-1)$ general elliptic normal curves  $E_i$ that are $u$-secant to  an elliptic curve $E_0$ of degree $d- k(r+1)$ and an additional elliptic normal curve which is  $g-1-(k-1)u$ secant to $E_0$. Proposition \ref{prop-g1config} yields the following corollary.

\begin{corollary}\label{cor-sse}
    Let $(k-1)u < g-1 \leq ku$. Then the normal bundle of a general BN-curve of genus $g$ and degree $d \geq (k+1)(r+1)$ is semistable.
\end{corollary}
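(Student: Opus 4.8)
The goal is Corollary \ref{cor-sse}, which specializes a general BN-curve of genus $g$ and degree $d \geq (k+1)(r+1)$ to a configuration of elliptic curves whose normal bundle is known to be semistable by Proposition \ref{prop-g1config}. The plan is to exhibit an explicit such configuration realizing the triple $(d,g,r)$ and to check that all the numerical hypotheses of Proposition \ref{prop-g1config} are met.

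First I would set up the configuration dictated by the statement. Since $(k-1)u < g-1 \leq ku$, I take the central curve $E_0$ to be a general elliptic curve of degree $e_0 = d - k(r+1)$. I attach $k-1$ general elliptic normal curves $E_1, \dots, E_{k-1}$, each of degree $r+1$ and each meeting $E_0$ quasitransversely in exactly $u$ general points, together with one final elliptic normal curve $E_k$ of degree $r+1$ meeting $E_0$ in $j_k := g-1-(k-1)u$ general points. The genus of the resulting curve is $1 + (k-1)u + j_k = g$ and its degree is $e_0 + k(r+1) = d$, so it has the right invariants; by Proposition \ref{230} and Corollary \ref{cor-g1bn} it is a BN-curve.

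The key step is verifying the hypotheses of Proposition \ref{prop-g1config}, namely that each secancy count $j_i$ satisfies $0 < j_i \leq \min(u_{e_i}, u_{e_0})$. For the curves $E_1, \dots, E_{k-1}$ of degree $r+1$ the relevant bound is $u_{e_i} = u_{r+1} = u$, and indeed each is attached at exactly $u$ points, so $j_i = u \leq u$. For $E_k$ we have $0 < j_k = g-1-(k-1)u \leq u$ directly from the defining inequality $(k-1)u < g-1 \leq ku$. It remains to confirm $j_i \leq u_{e_0}$ for all $i$, and here the degree hypothesis enters: since $d \geq (k+1)(r+1)$ we have $e_0 = d - k(r+1) \geq r+1$, so $E_0$ is a legitimate nondegenerate elliptic curve, and moreover $u_{e_0} = \lfloor (r+1)e_0/(2(r-1)) \rfloor \geq \lfloor (r+1)^2/(2(r-1)) \rfloor = u \geq j_i$. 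Thus every attaching number is at most $\min(u_{e_i}, u_{e_0})$, as required.

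With the configuration and its numerics in hand, Proposition \ref{prop-g1config} immediately gives that the normal bundle of this reducible curve is semistable, and Proposition \ref{prop:stab-open} propagates semistability to the general fiber, i.e.\ to the general smooth BN-curve of degree $d$ and genus $g$. The main obstacle is the bookkeeping ensuring $e_0 \geq r+1$ and $u_{e_0} \geq u$ simultaneously from the single clean hypothesis $d \geq (k+1)(r+1)$; once that is secured the rest is a direct appeal to the already-established propositions. I expect no genuine difficulty beyond confirming these inequalities, since the monotonicity $u_{e_0} \geq u_{r+1} = u$ follows from $e_0 \geq r+1$ and the definition of $u_e$.
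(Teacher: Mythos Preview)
Your proof is correct and follows exactly the same approach as the paper: the paper's argument (given in the paragraph immediately preceding the corollary) specializes to the identical configuration of $(k-1)$ elliptic normal curves $u$-secant to an elliptic curve $E_0$ of degree $d-k(r+1)$ together with one additional elliptic normal curve that is $(g-1-(k-1)u)$-secant, and then invokes Proposition~\ref{prop-g1config}. Your version is simply more explicit about verifying the inequalities $e_0 \geq r+1$ and $j_i \leq \min(u_{e_i}, u_{e_0})$ and about the final appeal to Proposition~\ref{prop:stab-open}, all of which the paper leaves implicit.
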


A minor modification by adding a general elliptic curve of degree $b_2(r) - r- 1$ which is 1-secant to $E_0$ and making one of the $E_i$ one less secant to make the union have genus $g$, we obtain the following corollary.

\begin{corollary}\label{cor-se}
    Let $(k-1)u < g-1 \leq ku$. Then the normal bundle of a general BN-curve of genus $g$ and degree $d \geq b_2(r) + k(r+1)$ is stable.
\end{corollary}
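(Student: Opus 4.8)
The plan is to realize a general BN-curve of degree $d$ and genus $g$ as a specialization of a star-shaped configuration of elliptic curves to which Proposition \ref{prop-g1config} applies, arranged so that the coprimality condition guaranteeing \emph{stability} (rather than merely semistability, as in Corollary \ref{cor-sse}) is satisfied. The one new idea beyond Corollary \ref{cor-sse} is to split off a single elliptic curve $E_{new}$ whose degree is chosen, via the defining property of $b_2(r)$ in Definition \ref{definition-b2r}, so that the restriction of $N_{C|\PP^r}$ to $E_{new}$ has degree coprime to $r-1$.

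First I would dispose of $g=2$: here $k=1$, so the hypothesis reads $d \geq b_2(r)+(r+1) \geq b_2(r)$ and Proposition \ref{genus2Stab} already gives stability. For $g \geq 3$, I would build $C = E_0 \cup E_{new} \cup \bigcup_{i=1}^{k} E_i$ as follows. Take $E_1,\dots,E_k$ to be general elliptic normal curves of degree $r+1$, with $E_i$ meeting $E_0$ quasitransversely in $j_i$ general points, where $1 \leq j_i \leq u$ and $\sum_{i=1}^k j_i = g-2$; such a distribution exists because $k \leq g-2$ and $g-2 \leq ku-1$, both of which follow from $(k-1)u < g-1 \leq ku$ together with $u \geq 4$. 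Let $E_{new}$ be a general elliptic curve meeting $E_0$ quasitransversely in a single point. Finally, to fix the degrees of $E_0$ and $E_{new}$, note that $d-k(r+1) \geq b_2(r)$ by hypothesis, so Definition \ref{definition-b2r} lets me write
\[
d - k(r+1) = e_0 + e_{new}, \qquad e_0,\, e_{new} \geq r+1, \qquad \gcd(r-1,\, 2e_{new}+1)=1,
\]
and I assign $E_0$ degree $e_0$ and $E_{new}$ degree $e_{new}$. A direct count gives $\deg C = e_0 + e_{new} + k(r+1) = d$, and since $E_0$ is met by $E_1,\dots,E_k,E_{new}$ in a total of $\sum_{i=1}^k j_i + 1 = g-1$ points, the genus formula of the setup preceding Proposition \ref{prop-g1config} gives $p_a(C)=1+(g-1)=g$.

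With this configuration in place, Proposition \ref{prop-g1config} applies directly: the secancies satisfy $0 < j_i \leq u \leq \min(u_{e_i}, u_{e_0})$ and $0 < 1 \leq \min(u_{e_{new}}, u_{e_0})$, so $N_{C|\PP^r}$ is semistable; and because $\gcd(r-1,\, 2e_{new}+1)=1$ is exactly the relative primality of $r-1$ with $2e_{new}+j_{new}$, the same proposition upgrades this to stability. The curve $C$ is a BN-curve of degree $d$ and genus $g$ by Proposition \ref{230} and Corollary \ref{cor-g1bn}, as recorded in the setup preceding Proposition \ref{prop-g1config}, so Proposition \ref{prop:stab-open} transports stability to the general BN-curve of degree $d$ and genus $g$.

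I expect the main obstacle to be purely bookkeeping, namely verifying that the two numerical constraints can be met simultaneously for every $(d,g)$ in range: the secancy total $\sum_{i=1}^k j_i = g-2$ with each $0 < j_i \leq u$, and the degree split $e_0+e_{new}=d-k(r+1) \geq b_2(r)$ with both summands $\geq r+1$. The delicate points are the boundary cases where $g$ is just above $(k-1)u+1$ (which could threaten $k \leq g-2$) and $g=2$ (where no $E_i$ appear and the construction degenerates to the two-component curve of Proposition \ref{genus2Stab}); handling the latter separately as above removes it entirely. Once the inequalities $k \leq g-2 \leq ku-1$ and $d-k(r+1)\geq b_2(r)$ are checked, the remainder is a direct appeal to the already-established Propositions \ref{prop-g1config} and \ref{prop:stab-open}.
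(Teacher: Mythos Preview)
Your proof is correct and follows the same strategy as the paper: modify the star-of-elliptic-curves configuration from Corollary \ref{cor-sse} by attaching one additional $1$-secant elliptic curve whose degree is chosen so that Proposition \ref{prop-g1config}'s coprimality criterion fires. The only difference is cosmetic: the paper fixes that extra curve to have degree exactly $b_2(r)-r-1$ (one can check from the minimality in Definition \ref{definition-b2r} that $b_2(r)-r-1$ is the least $d_1\geq r+1$ with $\gcd(r-1,2d_1+1)=1$, so this choice works), whereas you invoke Definition \ref{definition-b2r} directly on $d-k(r+1)\geq b_2(r)$ to produce the pair $(e_0,e_{new})$. Your bookkeeping for the secancies $j_i$ and the boundary case $g=2$ is fine.
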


\section{Proof of Theorems \ref{largeMain} and \ref{thm-introstab}}\label{largeSection}
 In this section, we deduce Theorems \ref{largeMain} and \ref{thm-introstab} from the results in Sections \S \ref{sec-prelim}, \S \ref{sec-elliptic} and \S \ref{sec-degenerations} purely numerically. For the reader's convenience, we collect our  results in the following remark.

\begin{remark}\label{rem-num}
\begin{enumerate}
     \item If $\rho(d,g,r)\geq 0$, $r-1$ divides $2d+2g-2$, and $$(d,g,r)\not\in \{(5,2,3), (6,4,3), (7,2,5) \},$$ then $(d,g,r)$ is BN semistable $($Corollary \ref{interpolation}$)$.
    \item  If $d-1\geq r\geq 3$, then $(d,1,r)$ is BN semistable $($Theorem \ref{genus1}$)$. 
    \item If $(d,g,r)$ is BN (semi)stable and $\rho(d+(r-1)^2,g+r(r-1), r)\geq 0$, then $(d+(r-1)^2,g+r(r-1),r)$ is BN (semi)stable $($Corollary \ref{cor-rnc} obtained by specializing the curve to a union of a BN-curve $C$ of type $(d,g,r)$ and $(r-1)$ rational normal curves of degree $r-1$ which are $(r+1)$-secant to $C)$.
    \item If $(d,g,r)$ is BN (semi)stable, $a,b,c\in \ZZ_{\geq 0}$ satisfy $a+b=c(r-1)$, and $b\leq \min(a,\binom{c}{2})$, then $(d+a,g+b,r)$ is BN (semi)stable $($Proposition \ref{addingLines} obtained by specializing the curve to a BN-curve $C$ of type $(d,g,r)$ union $b$ general 2-secant lines to $C$ and $(b-a)$ general 1-secant lines to $C)$.
     \item If $r\geq 4$, $(d,2,r)$ is BN semistable if $d \geq 2r$ and BN stable if $d\geq b_2(r)$ $($Proposition \ref{genus2Stab} and Corollary \ref{cor-g2} obtained by specializing the genus 2 curve to a union of two genus 1 curves$)$. 
     \item Express $g = 1+ \sum_{i=1}^I a_i$ for positive integers $a_1 \geq \cdots \geq a_I >0$. Let $e_i$ be integers  such that $e_i \geq  \frac{2(r-1) a_i}{r+1}$. Then $(d,g,r)$ is BN semistable if $d \geq e_1 + \sum_{i=1}^I e_i$. Furthermore, if $r-1$ and $2e_i + a_i$ is relatively prime for some $i$, then $(d,g,r)$ is stable (Corollary \ref{cor-1configgen} obtained by specializing the curve to a union of $I+1$ genus 1 curves $E_i$ for $0 \leq i \leq I$, where $E_i$ is $a_i$-secant to $E_0$ for $1 \leq i \leq I$ at general points with general tangent directions). 
    \item The triple $(7,2,4)$ is BN stable if the characteristic of the base field is not 2 (Theorem \ref{724}).
\end{enumerate}
\end{remark}

\begin{lemma}\label{243}
Suppose $(d', g_0,r)$ is BN (semi)stable for all $d'\geq d_0$. Let $q, b$ be nonnegative integers and let $a$ be the minimal positive integer such that there exists an integer $c$ with $a+b = c(r-1)$ and $b \leq \min(a,\binom{c}{2})$.  Let  $g=g_0+qr(r-1)+b$.  Then if $(d,g,r)$ is a BN triple with $d\geq d_0+a+q(r-1)^2$, then $(d,g,r)$ is BN (semi)stable.
\end{lemma}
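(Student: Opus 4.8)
The plan is to reach $(d,g,r)$ from the base range by combining the two degeneration moves packaged in Remark~\ref{rem-num}: attaching a network of lines (part (4), i.e.\ Proposition~\ref{addingLines}), which raises $(d,g)$ by $(a,b)$ whenever $a+b=c(r-1)$ and $b\leq\min(a,\binom{c}{2})$; and attaching $(r-1)$ rational normal curves of degree $r-1$ (part (3), i.e.\ Corollary~\ref{cor-rnc}), which raises $(d,g)$ by $\bigl((r-1)^2,\,r(r-1)\bigr)$ and is available whenever the resulting triple still has nonnegative Brill-Noether number. The genus increment $qr(r-1)+b$ is precisely the effect of one line move together with $q$ rational-normal-curve moves, so I would build the curve by performing these operations in sequence.

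Concretely, I would set $d'=d-a-q(r-1)^2$. The hypothesis $d\geq d_0+a+q(r-1)^2$ gives $d'\geq d_0$, so $(d',g_0,r)$ is BN (semi)stable by assumption. Applying Remark~\ref{rem-num}(4) with the given $a,b,c$ shows that $(d'+a,\,g_0+b,\,r)$ is BN (semi)stable, and then applying Remark~\ref{rem-num}(3) a total of $q$ times produces, after the $j$-th step, the triple $(d'+a+j(r-1)^2,\,g_0+b+jr(r-1),\,r)$. After all $q$ steps the degree is $d'+a+q(r-1)^2=d$ and the genus is $g_0+b+qr(r-1)=g$, so the chain terminates at $(d,g,r)$.

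The only place with genuine content is checking that each triple in this chain is a BN triple, since Remark~\ref{rem-num}(3) may be invoked only when the output triple has $\rho\geq 0$. Writing $\rho(d,g,r)=(r+1)d-rg-r(r+1)$, a direct computation gives
\[
\rho\bigl(d+(r-1)^2,\,g+r(r-1),\,r\bigr)=\rho(d,g,r)-(r-1),
\]
so a single rational-normal-curve move lowers $\rho$ by exactly $r-1$. Reading this backward from the endpoint, the triple obtained after $j$ of the $q$ moves has $\rho=\rho(d,g,r)+(q-j)(r-1)\geq\rho(d,g,r)\geq 0$, using that $(d,g,r)$ is a BN triple; in particular the starting triple of the rational-normal-curve stage, $(d'+a,g_0+b,r)$, has $\rho=\rho(d,g,r)+q(r-1)\geq 0$ and so is a genuine BN triple that Remark~\ref{rem-num}(4) legitimately produces. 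Hence every move is permissible and $(d,g,r)$ is BN (semi)stable. (When $b=0$ the line move simply attaches $a=r-1$ one-secant lines, the $b=0$ instance of Remark~\ref{rem-num}(4), so no separate case is needed.) The main thing to get right is this Brill-Noether bookkeeping; the stability propagation itself is immediate from the cited results.
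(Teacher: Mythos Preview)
Your argument is correct and follows the same route as the paper: start from $(d-q(r-1)^2-a,g_0,r)$, apply Proposition~\ref{addingLines} (Remark~\ref{rem-num}(4)) once, then Corollary~\ref{cor-rnc} (Remark~\ref{rem-num}(3)) $q$ times. The paper's proof is a two-sentence sketch that omits the $\rho$-bookkeeping you carried out; your verification that $\rho$ drops by exactly $r-1$ at each rational-normal-curve step, so every intermediate triple has $\rho\geq\rho(d,g,r)\geq 0$, fills in precisely the detail the paper leaves to the reader.
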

\begin{proof}
Apply Remark \ref{rem-num} (4) to  the BN (semi)stable triple $(d-q(r-1)^2-a, g_0, r)$ to obtain that $(d-q(r-1)^2,g-qr(r-1),r)$ is BN (semi)stable. Then apply Remark \ref{rem-num} (3) $q$ times to deduce that $(d,g,r)$ is BN (semi)stable.
\end{proof}
\begin{lemma}\label{249}
Suppose $(d, g_0,r)$ is BN (semi)stable for all $d\geq d_0$. Set
\[
q_0=\left \lceil \frac{2\rho(d_0,g_0,r)+5r^2-7r-2}{2(r-1)}\right \rceil.
\]
Then if $g\geq  g_0+\binom{r-1}{2}+q_0r(r-1)+1$ and $(d,g,r)$ is a BN triple, then $(d,g,r)$ is BN (semi)stable.
\end{lemma}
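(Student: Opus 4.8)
My plan is to deduce the statement from Lemma~\ref{243} by choosing, for each admissible $g$, a single well-adapted pair of parameters $(q,b)$. Writing $\rho(d,g,r) = (r+1)d - rg - r(r+1)$, a BN triple of genus $g$ has minimal degree $d_{\min}(g)$ determined by $\rho(d_{\min}(g),g,r) \in \{0,\dots,r\}$, and since $\rho$ grows by $r+1$ for each unit increase of $d$, the inequality $d_0 + a + q(r-1)^2 \le d_{\min}(g)$ is equivalent to $\rho\bigl(d_0 + a + q(r-1)^2, g, r\bigr) \le r$. Using $g = g_0 + qr(r-1) + b$ one computes
\[
\rho\bigl(d_0 + a + q(r-1)^2,\, g,\, r\bigr) = \rho(d_0,g_0,r) + (r+1)a - rb - q(r-1).
\]
So it will suffice to produce, for each $g \ge g_0 + \binom{r-1}{2} + q_0 r(r-1) + 1$, a valid choice with $\rho(d_0,g_0,r) + (r+1)a - rb - q(r-1) \le r$: Lemma~\ref{243} then gives (semi)stability of \emph{every} BN triple $(d,g,r)$, because each has $d \ge d_{\min}(g) \ge d_0 + a + q(r-1)^2$.

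The crux will be the choice of $b$. I would write $g - g_0 = q_{\max}\,r(r-1) + b_0$ with $0 \le b_0 < r(r-1)$. Taking $b = b_0$ is fatal: when $b_0$ is of moderate size the constraint $b_0 \le \binom{c}{2}$ forces $c \approx \sqrt{2b_0}$, so the attachment requires $a$ of order $r^2$, most of it from $1$-secant lines that add degree but no genus, and $(r+1)a - rb_0$ swells to order $r^3$, which $q$ cannot absorb. The remedy is to spend one fewer degree-$(r-1)$ rational normal curve in exchange for a full period of extra $2$-secant lines: passing from $(q,b)$ to $(q-1,\,b+r(r-1))$ leaves $g$ fixed but moves $b$ into the range $b \ge \tfrac{1}{2} r(r-1)$. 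I would therefore set $j=1$, $b = b_0 + r(r-1)$, $q = q_{\max}-1$ when $b_0 \le \binom{r-1}{2}$, and $j=0$, $b=b_0$, $q=q_{\max}$ otherwise.

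This range is favorable because, as soon as $b \ge \tfrac{1}{2} r(r-1)$, the binding constraint on $c$ is $b \le a$ rather than $b \le \binom{c}{2}$: one checks that $c = \lceil 2b/(r-1)\rceil$ already satisfies $\binom{c}{2}\ge b$, whence the minimal degree cost is $a = c(r-1)-b \le b + (r-2)$ and $(r+1)a - rb \le b + (r+1)(r-2)$. Feeding in $b \le \binom{r-1}{2} + r(r-1)$ in the shifted case and $b = b_0 < r(r-1)$ in the unshifted one, and handling the residual values $\binom{r-1}{2} < b_0 \le \tfrac{1}{2} r(r-1)$ (which sit just below the efficient range) by the direct evaluation $c = r$, I expect to obtain in all cases
\[
(r+1)a - rb \le \frac{5r^2 - 5r - 2}{2}.
\]

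It then remains to bound $q$ below. The genus hypothesis gives $q_{\max}\,r(r-1) = g - g_0 - b_0 \ge q_0 r(r-1) + \bigl(\binom{r-1}{2}+1-b_0\bigr)$, so $q_{\max} \ge q_0$ always, and $q_{\max} \ge q_0+1$ exactly when $b_0 \le \binom{r-1}{2}$, which is precisely the case in which I lowered $q$ by one; hence $q \ge q_0$ throughout. Since the definition of $q_0$ yields $q_0(r-1) - \rho(d_0,g_0,r) \ge \tfrac{5r^2-7r-2}{2}$, I would conclude
\[
\rho(d_0,g_0,r) + (r+1)a - rb - q(r-1) \le \rho(d_0,g_0,r) + \frac{5r^2-5r-2}{2} - q_0(r-1) \le r.
\]
The one delicate point, and the reason the threshold $\binom{r-1}{2}$ appears, is that the loss of a rational curve in the shifted case must always be compensated by the extra unit of $q_{\max}$ supplied by the $\binom{r-1}{2}$ term; verifying this matching is where the bookkeeping has to be done carefully.
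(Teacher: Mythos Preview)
Your proposal is correct and follows essentially the same approach as the paper: both arguments write $g-g_0=qr(r-1)+b$ with $b$ chosen in the window $\bigl[\binom{r-1}{2}+1,\,\binom{r-1}{2}+r(r-1)\bigr]$ (you reach it via the shift $b_0\mapsto b_0+r(r-1)$ when $b_0\le\binom{r-1}{2}$, the paper simply declares $b$ to lie there) and then invoke Lemma~\ref{243}. Your treatment is in fact slightly more careful than the paper's: for $b$ in the sub-range $\bigl(\binom{r-1}{2},\,\tfrac12 r(r-1)\bigr]$ the constraint $b\le\binom{c}{2}$ forces $c=r$ and the paper's blanket bound $a-b\le r-2$ does not literally hold there, whereas you explicitly handle this residual case via $c=r$ and verify that the combined quantity $(r+1)a-rb$ still satisfies the needed inequality.
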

\begin{proof}
Set $\rho_0=\rho(d_0,g_0,r)$, and write $g=g_0+qr(r-1)+b$ for integers $q$ and $\frac{(r-1)(r-2)}{2}+1\leq b \leq \frac{(r-1)(r-2)}{2}+r(r-1)$.
Let $a$ be the minimal integer with $a\geq b$ and $a+b$ divisible by $r-1$.

We have
\[
\rho(d-q(r-1)^2-a, g_0,r)=\rho(d,g,r)+q(r-1)-b-(r+1)(a-b).
\]
We have $a-b \leq r-2$ and  $b \leq \binom{r-1}{2}+r(r-1)$.  The bound on $g$ in the hypothesis is  $q\geq q_0$. Hence, we obtain the inequality
\[
\rho(d-q(r-1)^2-a, g_0,r) \geq \rho(d,g,r)+q_0(r-1)-\binom{r-1}{2}-r(r-1)-(r+1)(r-2)
\]
which simplifies to
\[
\rho(d-q(r-1)^2-a, g_0,r) = \rho(d,g,r)+q_0(r-1)-\frac{5r^2-7r-2}{2}.
\]
By the bound on $q_0$ in the hypothesis, we then have $\rho(d-q(r-1)^2-a, g_0,r)\geq \rho(d_0,g_0,r)$, and $d-q(r-1)^2-a\geq d_0$, so Lemma \ref{243} implies $(d,g,r)$ is BN (semi)stable.
\end{proof}
\begin{theorem}
Let $(d,g,r)$ be a BN triple. If 
\begin{equation}\label{277}
g\geq \binom{r-1}{2}+2+\left \lceil \frac{5r^2-7r}{2(r-1)}\right\rceil r(r-1)
\end{equation}
then $(d,g,r)$ is BN semistable. If
\begin{equation}\label{279}
g \geq \binom{r-1}{2}+3+\left \lceil\frac{2(r+1)b_2(r)+3r^2-13r-2}{2(r-1)}\right \rceil r(r-1),
\end{equation}
with $b_2(r)$ the integer defined after Proposition \ref{genus2Stab}, then $(d,g,r)$ is BN stable.
\end{theorem}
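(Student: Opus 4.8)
The plan is to derive both statements directly from Lemma \ref{249}, which converts a genus-$g_0$ base case that is (semi)stable for all sufficiently large degree into (semi)stability for all larger genera, with the genus threshold governed entirely by the Brill--Noether number $\rho_0 = \rho(d_0, g_0, r)$ of the base case. As the section intro advertises, nothing geometric remains: the whole argument reduces to choosing the correct base case for each statement and checking that the bound produced by Lemma \ref{249} agrees exactly with the stated expression.

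For semistability I would take the base case $g_0 = 1$, $d_0 = r+1$: by Remark \ref{rem-num}(2) the triple $(d, 1, r)$ is BN semistable for every $d \geq r+1$, so the hypothesis of Lemma \ref{249} holds. The key computation is $\rho_0 = \rho(r+1, 1, r) = 1$, since $g - d + r = 0$ in this case. Substituting $g_0 = 1$ and $\rho_0 = 1$ into the definition of $q_0$ in Lemma \ref{249} gives
$$q_0 = \left\lceil \frac{2\cdot 1 + 5r^2 - 7r - 2}{2(r-1)}\right\rceil = \left\lceil \frac{5r^2 - 7r}{2(r-1)}\right\rceil,$$
and the conclusion of Lemma \ref{249}, namely $g \geq 1 + \binom{r-1}{2} + q_0\, r(r-1) + 1$, is then exactly \eqref{277}.

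For stability I would instead take $g_0 = 2$, $d_0 = b_2(r)$: by Remark \ref{rem-num}(5) the triple $(d, 2, r)$ is BN stable for every $d \geq b_2(r)$. Here the Brill--Noether number of the base case evaluates to $\rho_0 = \rho(b_2(r), 2, r) = (r+1)b_2(r) - r^2 - 3r$. Plugging $g_0 = 2$ and this $\rho_0$ into $q_0$ yields
$$q_0 = \left\lceil \frac{2\rho_0 + 5r^2 - 7r - 2}{2(r-1)}\right\rceil = \left\lceil \frac{2(r+1)b_2(r) + 3r^2 - 13r - 2}{2(r-1)}\right\rceil,$$
so the conclusion $g \geq 2 + \binom{r-1}{2} + q_0\, r(r-1) + 1$ of Lemma \ref{249} is exactly \eqref{279}.

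Consequently the proof is essentially bookkeeping, and there is no substantive obstacle: all the real content is already packaged in Lemma \ref{249} and the degenerations behind it. The one thing to be careful about is the arithmetic that yields the two constants $+2$ and $+3$; these come precisely from the $g_0 + 1$ summand in the genus bound of Lemma \ref{249}, with $g_0 = 1$ and $g_0 = 2$ respectively, added to $\binom{r-1}{2}$. I would double-check the two evaluations of $\rho_0$, since a sign error there would propagate into the ceiling and spoil the exact match with \eqref{277} and \eqref{279}. Finally I would record the standing hypotheses ($r \geq 4$, and $g \geq 2$ for the stability half, which is automatic from the genus bound) needed for the cited base cases to apply.
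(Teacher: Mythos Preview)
Your proposal is correct and follows exactly the paper's own proof: both apply Lemma \ref{249} with base case $(g_0,d_0)=(1,r+1)$ for semistability and $(g_0,d_0)=(2,b_2(r))$ for stability, compute $\rho_0=1$ and $\rho_0=(r+1)b_2(r)-r^2-3r$ respectively, and verify that the resulting $q_0$ values match \eqref{277} and \eqref{279}. The arithmetic you flag as worth double-checking is carried out identically in the paper.
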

\begin{remark}
The bound (\ref{277}) is on the order of $g\geq \frac{5r^3}{2}$, while the bound (\ref{279}) is on the order of $g\geq \frac{7r^3}{2}$.
\end{remark}
\begin{proof}
We apply Lemma \ref{249} to  the genus 1 and 2 cases in Remark \ref{rem-num} (2) and (5). For semistability, we have that $(d,1,r)$ is BN semistable for $d\geq r+1$. Since $\rho(r+1,1,r)=1$, the formula in Lemma \ref{249} gives 
\[
q_0=\left \lceil \frac{5r^2-7r}{2(r-1)}\right \rceil. 
\]
For stability, we have that $(d,2,r)$ is BN stable for $d\geq b_2(r)$. Since $$\rho(b_2(r),2,r)=2-(r+1)(2-b_2(r)+r)=(r+1)b_2(r)-r(r+3),$$ the formula in Lemma \ref{249} gives
\[
q_0=\left \lceil \frac{2(r+1)b_2(r)+3r^2-13r-2}{2(r-1)}\right \rceil.
\]
So Lemma \ref{249} implies semistability or stability if $g$ satisfies the inequality (\ref{277}) or (\ref{279}) respectively.
\end{proof}
For smaller $g$ we can obtain stability of the triple $(d,g,r)$ for $d$ large, per the following.
\begin{proposition} \label{prop-gSmall}
For a positive integer $b$, let $a_b$ be the least positive integer with $b+a_b=c(r-1)$ and $b\leq \min(a_b, \binom{c}{2})$. Then, if $g=1+b+qr(r-1)$ and $(d,g,r)$ is a BN triple with $d\geq r+1+a_b+q(r-1)^2$, then $(d,g,r)$ is BN semistable. If $g=2+b+qr(r-1)$ and $(d,g,r)$ is a BN triple with $d\geq b_2(r)+a_b+q(r-1)^2$, then $(d,g,r)$ is BN stable.
\end{proposition}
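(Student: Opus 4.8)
The plan is to recognize both statements as direct instances of Lemma \ref{243}. The quantity $a_b$ defined here is exactly the integer $a$ produced in Lemma \ref{243} from the data $(b,r)$: in both cases it is the least positive integer completing $b$ to a multiple of $r-1$ (so that $b + a_b = c(r-1)$ for some integer $c$) subject to the constraint $b \le \min(a_b, \binom{c}{2})$. So once this identification is noted, all that remains is to supply Lemma \ref{243} with the appropriate base cases: one of genus $1$ for semistability and one of genus $2$ for stability.

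For the first claim I would set $g_0 = 1$. By Remark \ref{rem-num}(2) (equivalently Theorem \ref{genus1}), the triple $(d',1,r)$ is BN semistable for every $d' \ge r+1$, so I may take $d_0 = r+1$ in Lemma \ref{243}. Writing $g = 1 + qr(r-1) + b$ and invoking the lemma, every BN triple $(d,g,r)$ with $d \ge (r+1) + a_b + q(r-1)^2$ is BN semistable, which is precisely the assertion.

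For the second claim I would set $g_0 = 2$. By Remark \ref{rem-num}(5) (that is, Proposition \ref{genus2Stab} together with the definition of $b_2(r)$), the triple $(d',2,r)$ is BN stable for every $d' \ge b_2(r)$, so I may take $d_0 = b_2(r)$. Writing $g = 2 + qr(r-1) + b$ and applying Lemma \ref{243} in its stable form yields BN stability for every BN triple with $d \ge b_2(r) + a_b + q(r-1)^2$, exactly as claimed.

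Since the proposition reduces so cleanly to Lemma \ref{243}, there is no substantial obstacle here; the only points requiring care are verifying that the two base-case hypotheses of Lemma \ref{243}—semistability (respectively stability) for \emph{all} $d'$ above a threshold—genuinely hold, which they do by the cited results, and confirming that the definition of $a_b$ coincides verbatim with the $a$ appearing in that lemma.
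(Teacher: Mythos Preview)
Your proposal is correct and matches the paper's approach. The paper's proof simply cites Remark \ref{rem-num} (2), (3), (4) for semistability and (5), (3), (4) for stability; since Lemma \ref{243} is itself proved by applying Remark \ref{rem-num} (4) followed by (3), your invocation of Lemma \ref{243} with the genus $1$ and genus $2$ base cases is exactly the same argument, just packaged one level higher.
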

\begin{proof}
The semistability result is immediate from Remark \ref{rem-num} (2), (3), and (4), while the stability result is immediate from Remark \ref{rem-num} (5), (3), and (4).
\end{proof}
Given an integer $c\geq 1$, if we have an integer $b$ with $\binom{c-1}{2}+1\leq b \leq \binom{c}{2}$, then $a_b$ is the greater of $c(r-1)-b$ or the least integer at least $b$ such that $a_b+b$ is divisible by $r-1$. Hence, $a_b-b$ is bounded above by $\max(c(r-1)-2b, r-2)$. Minimizing $b$ in the range gives that $a_b-b\leq \max(c(r+2-c)-4, r-2)$. The former is maximized when $c=\frac{r+2}{2}$, so for all positive integers $b$, $a_b-b$ is bounded above by $\frac{r^2}{4}+r-3$. We also note the linear bound $a_b\leq b(2r-3)$.
Then the following corollary is immediate from Corollaries \ref{cor-sse}, \ref{cor-se} and Proposition \ref{prop-gSmall}.
\begin{corollary}\label{cor-gSmall}
Let $u = \left \lfloor \frac{(r+1)^2}{2(r-1)} \right \rfloor$ and let $k$ be the integer such that $(k-1)u < g-1 \leq ku$. Then
$(d,g,r)$ is BN semistable if $g\geq 1$ and
\[
d\geq \min\left(g+\frac{r^2}{4}+2r-3, (k+1)(r+1), (g-1)(2r-3)+r+1\right)
\]
and is BN stable if $g\geq 2$ and
\[
d\geq b_2(r) + \min\left(g+\frac{r^2}{4}+r-5, k(r+1),  (g-1)(2r-3)\right).
\]
\end{corollary}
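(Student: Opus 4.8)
The plan is to match each of the three quantities inside the two minima to exactly one of the cited inputs, and then to exploit the elementary observation that $d \geq \min(A,B,C)$ holds precisely when $d$ is at least one of $A$, $B$, $C$. Thus it suffices to prove that each individual quantity, once met or exceeded by $d$, already forces the desired (semi)stability, and the corollary follows by taking the minimum. Before beginning I would record the two estimates on $a_b$ established in the paragraph preceding the statement, namely the quadratic bound $a_b \leq b + \frac{r^2}{4} + r - 3$ and the linear bound $a_b \leq b(2r-3)$.

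For the semistable assertion I would treat the three quantities in turn. The bound $(k+1)(r+1)$ is exactly the conclusion of Corollary \ref{cor-sse}. For the other two I would invoke the semistable half of Proposition \ref{prop-gSmall} with $q=0$ and $b=g-1$, so that its hypothesis reads $d \geq r+1+a_b$. Feeding in the quadratic estimate gives
\[
r+1+a_b \leq g + \tfrac{r^2}{4} + 2r - 3,
\]
and feeding in the linear estimate gives $r+1+a_b \leq (g-1)(2r-3) + r + 1$; in either case $d$ at least the displayed right-hand side triggers the proposition.

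The stable assertion runs in parallel. The bound $b_2(r)+k(r+1)$ is Corollary \ref{cor-se}, and for the remaining two I would apply the stable half of Proposition \ref{prop-gSmall} with $q=0$ and $b=g-2$, whose hypothesis is $d \geq b_2(r)+a_b$. The two estimates then yield $b_2(r)+a_b \leq b_2(r) + g + \frac{r^2}{4} + r - 5$ and $b_2(r)+a_b \leq b_2(r) + (g-2)(2r-3) \leq b_2(r)+(g-1)(2r-3)$, the last step only enlarging the bound and hence being harmless.

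I do not anticipate any serious difficulty, since all the mathematical content sits in the three cited results and what remains is bookkeeping. The one point requiring genuine care is the boundary where $b=0$, that is $g=1$ for semistability and $g=2$ for stability, since there $a_b$ is not a positive integer and Proposition \ref{prop-gSmall} does not literally apply. In those cases I would instead quote the base results directly, Theorem \ref{genus1} for $g=1$ and Proposition \ref{genus2Stab} (equivalently Corollary \ref{cor-se} with $k=1$) for $g=2$, and check that the minimum appearing in the corollary is at least the threshold those results supply.
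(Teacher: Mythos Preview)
Your proposal is correct and is exactly the paper's approach: the paper's own proof is the single sentence ``immediate from Corollaries \ref{cor-sse}, \ref{cor-se} and Proposition \ref{prop-gSmall},'' and you have simply unpacked the bookkeeping behind that sentence. One trivial quibble: Proposition \ref{genus2Stab} and Corollary \ref{cor-se} with $k=1$ are not literally equivalent (they give thresholds $b_2(r)$ and $b_2(r)+r+1$), but since for $g=2$ the minimum in the corollary equals $b_2(r)+(r+1)$ either reference suffices.
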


We observe that Corollary \ref{cor-gSmall} yields the statements of Theorems \ref{largeMain} and \ref{thm-introstab}.

\subsection{Curves in \texorpdfstring{$\PP^4$}{P4}}\label{sec-p4}
In this subsection, we discuss the (semi)stability of the normal bundle of BN-curves in $\PP^4$ and prove Theorem \ref{theorem-P4}. As noted in \S \ref{sec-introunstable}, the triple $(6,2,4)$ is unstable. Excepting that case, we have the following result, which immediately implies Theorem \ref{theorem-P4}.
\begin{theorem}
Assume that the characteristic is not 2.
If $(d,g,4)$ is a BN triple,  $g\geq 1$,  $(d,g)\neq (6,2)$ and $(d,g)$ is not one of the 48 pairs listed in Table \ref{table-P4ss}, then $(d,g,4)$ is BN semistable. If, in addition, $g\geq 2$ and $(d,g)$ is not $(8,5)$ or one of the 63 additional pairs listed in Table \ref{table-P4s}, then $(d,g,4)$ is BN stable.
\end{theorem}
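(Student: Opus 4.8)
The plan is to specialize the inductive machinery of Remark \ref{rem-num} to $r = 4$, where $r - 1 = 3$, $u = \lfloor 25/6 \rfloor = 4$, $\binom{r-1}{2} = 3$, $(r-1)^2 = 9$, $r(r-1) = 12$, and $b_2(4) = 10$ by Proposition \ref{b2Bounds}(1), and to supplement it with the single extra seed $(7,2,4)$ furnished by Theorem \ref{724} in characteristic not $2$. The organizing principle is the coprimality dichotomy coming from $\rk N_{C|\PP^4} = 3$: since $\deg N_{C|\PP^4} = 5d + 2g - 2 \equiv 2(d + g - 1) \pmod 3$, a semistable $N_{C|\PP^4}$ is automatically stable whenever $3 \nmid d + g - 1$, while exactly the class $3 \mid d + g - 1$ makes the degree divisible by the rank and so demands separate input for stability.

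For semistability I would first dispatch the divisible class $3 \mid d + g - 1$ in one stroke: Corollary \ref{interpolation} applies to every BN triple with $3 \mid 2d + 2g - 2$, and its exceptional list $\{(5,2,3),(6,4,3),(7,2,5)\}$ contains no $\PP^4$ triple, so all such pairs are semistable with no exceptions. Consequently every semistability exception must lie in one of the two classes $3 \nmid d + g - 1$, and there I would build each target from seeds: the genus-$1$ seeds $(d,1,4)$ for $d \geq 5$ (Remark \ref{rem-num}(2)), the genus-$2$ seeds $(d,2,4)$ for $d \geq 8$ together with $(7,2,4)$ from Theorem \ref{724} (Remark \ref{rem-num}(5)), and the elliptic-configuration seeds of Corollary \ref{cor-1configgen} for higher genus, propagated forward by the two lattice moves available for $r = 4$: attaching $(r+1)$-secant rational normal curves, $(d,g) \mapsto (d + 9, g + 12)$ (Remark \ref{rem-num}(3)), and attaching lines, $(d,g) \mapsto (d + a, g + b)$ with $a + b = 3c$ and $b \leq \min(a, \binom{c}{2})$ (Remark \ref{rem-num}(4)). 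Every such move increases $d + g - 1$ by a multiple of $3$ and hence preserves its residue, so each of the two classes $3 \nmid d + g - 1$ forms an independent tower built from same-residue seeds; the small line instances $(d+3,g)$, $(d+5,g+1)$, $(d+6,g)$, $(d+6,g+3)$ together with the rational-normal-curve move generate, within a fixed residue, enough of the $(d,g)$-semigroup to reach every sufficiently large target.

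For stability the coprime classes $3 \nmid d + g - 1$ need nothing new: semistability propagated off the seeds upgrades automatically. The work is confined to the divisible class $3 \mid d + g - 1$, where interpolation delivers only semistability and the normal bundle can genuinely be strictly semistable, as for the canonical genus-$5$ curve $(8,5,4)$ with $N_{C|\PP^4} = \OO_C(2)^{\oplus 3}$. Here I would instead route each chain through a seed carrying an elliptic (or genus-$2$) component of degree coprime to $3$ --- the coprimality hypotheses of Proposition \ref{genus2Stab} and Proposition \ref{prop-g1config}, controlled by $b_2(4) = 10$ --- so that a coprime restriction forces stability via Lemma \ref{lem:naive}, and then propagate by the same line and rational-normal-curve moves, which preserve stability.

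The crux of the proof, and the origin of the two tables, is the \emph{reachability} analysis at the Brill--Noether boundary, where $d$ is close to its minimum $\lceil (4g+20)/5 \rceil$ for the given $g$. There a prospective chain can fail in one of two ways: a required intermediate triple can fall outside the cone $\rho \geq 0$ or violate a move's side condition (for instance the $\rho \geq r - 1 = 3$ needed to attach rational normal curves), or the only admissible same-residue seed is itself a small low-genus triple not independently known. Making this precise means sharpening the generic thresholds of Corollary \ref{cor-gSmall} into exact per-genus bounds, treating the residue of $g$ modulo $12$ separately because of the period of the rational-normal-curve move, and checking by hand the finitely many sporadic small triples near $(7,2,4)$ and $(8,5,4)$ that no move reaches. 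Establishing that every pair outside the tabulated sets (together with the genuine failures $(6,2,4)$ and $(8,5,4)$) does admit such a certificate, while the tabulated pairs do not, is a finite but intricate bookkeeping exercise, and I expect this --- rather than any individual geometric input --- to be the main labor.
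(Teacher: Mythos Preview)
Your proposal is correct and follows essentially the same approach as the paper: both reduce everything to the inductive moves of Remark~\ref{rem-num} specialized to $r=4$, seed with genus-$1$ and genus-$2$ curves together with the extra input $(7,2,4)$ from Theorem~\ref{724}, invoke interpolation (Corollary~\ref{interpolation}) when $3\mid d+g-1$, and use the coprimality of rank and degree to upgrade semistability to stability in the complementary residue classes, leaving a finite bookkeeping exercise near the Brill--Noether boundary that produces the two tables. Your organization by residue class $d+g-1\pmod 3$ is a clean way to frame the dichotomy the paper exploits more implicitly; the paper instead organizes by reducing to $2\le g_0\le 13$ via the $(+9,+12)$ move and recording explicit per-genus thresholds (its Table~\ref{table-degenThresholds}), but the underlying argument is the same.
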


\begin{proof}
The theorem follows from the facts listed in Remark \ref{rem-num}.   For a given pair $(d,g)$ with $\rho(d,g,4)\geq 0$,  there are four reasons we apply to argue that $(d,g,4)$ is BN semistable.
\begin{enumerate}
\item Degeneration: $C$ degenerates to the union of a genus 1 curve, a collection of 1 and 2-secant lines, and a collection of triples of 4-secant rational cubics with each triple in a hyperplane.
\item Interpolation: if $2d+2g-2$ is divisible by 3, then $(d,g,4)$ is BN semistable (c.f.  Remark \ref{rem-num} (1)).
\item Configurations of elliptic curves: We specialize $C$ to a union of elliptic curves and use Remark \ref{rem-num} (6).
\item $(7,2)$ and $(16,14)$ are BN stable by Theorem \ref{724}. The assumption that the characteristic is not 2 is only needed for these two cases.
\end{enumerate}
Applying Remark \ref{rem-num} (3) allows us to conclude that $(d,g,4)$ is BN (semi)stable if $(d-9,g-12,4)$ is BN (semi)stable. In particular, all triples that could fail to be (semi)stable of genus at least 2 are of the form $(d_0+9k, g_0+12k, 4)$ for some $2\leq g_0\leq 13$ and some $d_0$.  Given such a bad $(d_0,g_0)$, we have $\rho(d_0+9k, g_0+12k, 4)=\rho(d_0,g_0,4)-3k$; since this must be nonnegative, we have that each $(d_0,g_0,4)$ that our methods fail to show BN (semi)stable  gives rise to only finitely many triples.

\begin{table}
    \centering
    \begin{tabular}{|c|c|c|c|}
      \hline
$g$ & $d_{min}$& Semistable threshold & Stable threshold\\
\hline
    2&6&7&9\\
    3&7&10&11\\
    4&8&10&10\\
    5&8&10&10\\
    6&9&12&12\\
    7&10&11&15\\
    8&11&13&15\\
    9&12&15&15\\
    10&12&14&16\\
    11&13&16&16\\
    12&14&18&18\\
    13&15&17&19\\
\hline
\end{tabular}
    \caption{Lowest $d$ for which semistable/stable degeneration argument applies for $2\leq g\leq 13$. $d_{min}$ is the least $d$ such that $\rho(d,g,4)$ is nonnegative.}
    \label{table-degenThresholds}
\end{table}

\begin{table}
    \centering
    \begin{tabular}{|c|c|c|c|}
    \hline
         $(d,g)$  semistability unknown&$k$ range & $(d,g)$  semistability unknown&$k$ range\\
         \hline
$(9k+8, 12k+3)$& $0\leq k\leq 2$&  $(9k+14, 12k+9)$ & $0 \leq k \leq 4$ \\
$(9k+9, 12k+3)$& $0\leq k\leq 4$ & $(9k+13, 12k+10)$ & $0\leq k \leq 1$ \\
 $(9k+8, 12k+4)$& $0\leq k\leq 1$ & $(13,11)$ & \\
$(9k+9, 12k+5)$& $0\leq k\leq 1$ & $(9k+15, 12k+11)$ & $0\leq k \leq 3$ \\
$(9, 6)$&& $(14,12)$&\\
     $(9k+11, 12k+6)$& $0\leq k\leq 3$&$(9k+15, 12k+12)$ & $0 \leq k \leq 2$ \\
$(10, 7)$& & $(9k+17, 12k+12)$ & $0 \leq k \leq 5$\\
$(9k+12, 12k+8)$& $0\leq k\leq 2$ & $(9k+16, 12k+13)$ & $0 \leq k \leq 2$\\
$(9k+12, 9k+9)$ & $0\leq k \leq 1$ & &  \\
             \hline
    \end{tabular}
    \caption{Pairs $(d,g)$ for which semistability of $(d,g,4)$ is unknown with $g\geq 2$. $(6,2)$ is excluded. There are 48 cases.}
    \label{table-P4ss}
\end{table}

\begin{table}
    \centering
    \begin{tabular}{|c|c|c|c|}
    \hline
         $(d,g)$ with stability unknown&$k$ range& $(d,g)$ with stability unknown&$k$ range\\
         \hline
$(9k+8, 12k+2)$&$0\leq k\leq 4$ & $(9k+14, 12k+8)$&$0\leq k\leq 6$\\
$(9k+7, 12k+3)$&$0\leq k\leq 1$ &  $(9k+13, 12k+9)$ & $0\leq k\leq 3$\\
$(9k+10, 12k+3)$&$0\leq k\leq 6$ & $(12,10)$ &  \\
$(9k+9, 12k+4)$&$0\leq k\leq 3$ & $(9k+15, 12k+10)$ & $0\leq k\leq 5$\\
$(9k+14, 12k+11)$ & $0\leq k\leq 2$ &
$(9k+10, 12k+6)$&$0\leq k\leq 2$  \\ 
$(9k+16, 12k+12)$ & $0\leq k\leq 4$ & 
$(9k+12, 12k+7)$&$0\leq k\leq 4$ \\ $(9k+15, 12k+13)$ & $0\leq k\leq 1$ & 
$(9k+11, 12k+8)$&$0\leq k\leq 1$ \\ $(9k+18, 12k+13)$ & $0\leq k\leq 6$ & & \\
\hline

    \end{tabular}
    \caption{Pairs $(d,g)$ for which stability of $(d,g,4)$ is unknown but semistability is known with $g\geq 2$. The case of genus 5 canonical curves $(8,5)$ is excluded because in that case the normal bundle is semistable but not stable. There are 63 additional cases.}
    \label{table-P4s}
\end{table}
When $2\leq g\leq 13$, the thresholds above which Remark \ref{rem-num} (2), (4), (5) and (6) guarantee stability or semistability are listed in Table \ref{table-degenThresholds}.  

When $g=2$, by Corollary \ref{cor-g2} and Theorem \ref{724}, $(d,2,4)$ is semistable for $d \geq 7$ and in fact $(d, 2, 4)$ is stable for $d=7, 9$. In addition, $b_2(4) = 10$. Hence, $(d, 2, 4)$ is stable if $d \geq 9$. Since $(6,2,4)$ is unstable, this only leaves the stability of $(8,2,4)$ unresolved.

When $g=3$, by Remark \ref{rem-num} (6), we can take the union of an elliptic normal curve and a two-secant elliptic curve of degree $d-5$ to deduce the semistability of $(d,6,4)$ for $d \geq 10$. However, since $3|12$, to deduce the stability we need to take an elliptic curve of degree $6$ and a 2-secant elliptic curve of degree $d \geq 5$. Hence, we can only deduce the stability of $(d, 3, 4)$ for $d \geq 11$.

When $g=6$, by Remark \ref{rem-num} (6), we can take the union  of two $5$-secant elliptic curves of degree $6$ and $d-6$ provided that $d-6 \geq 6$ to see the stability of $(d, 6, 4)$ for $d \geq 12$. 

When $r=4$, $u=\lfloor \frac{(r+1)^2}{2(r-1)} \rfloor = 4$. Hence, when $g=1 + 4k$, we can take $a_1 = \cdots = a_{k} = 4$  and $e_1 = \dots = e_k = 5$ in Remark \ref{rem-num} (6). Since $3$ and $14$ are relatively prime, we conclude that $(d, 1+4k, 4)$ is stable for $d \geq 5(k+1)$ provided that $k\geq 1$. This gives the stability threshold for $g=5$ and 9. Making one of the curves one or two  less secant, gives the stability threshold for $g=4,7,8$.

Repeatedly applying Remark \ref{rem-num} (4), we can further reduce the (semi)stability threshold. For example, since $(d,2,4)$ is semistable for $d \geq 7$, it follows that $(d,8,4)$ is semistable for $d \geq 13$ by taking $a=b=6$ and $c=4$. Similarly, $(d,2,4)$ is stable for $d \geq 9$, hence $(d, 8, 4)$ is stable for $d \geq 15$.

This process reduces the set of pairs $(d,g)$ such that $(d,g,4)$ could fail to be semistable to $(6,2)$ and the triples listed in Table \ref{table-P4ss}.

For a triple to be BN semistable but not BN stable, the rank and degree of the associated $N_{C\vert \PP^r}$ must not be coprime, that is, 3 must divide $2d+2g-2$. Hence, the triples we know to be semistable but do not know to be stable are exactly those with $2d+2g-2$ divisible by 3 that fall below the stability threshold we get by Remark \ref{rem-num} (2), (4), (5) and (6). These are exactly the  triples listed in Table \ref{table-P4s}.

\end{proof}

\section{Genus 2 degree 7 curve in  \texorpdfstring{$\PP^4$}{P4}}\label{724Section}
Since $(6,2,4)$ is an unstable triple (see \S \ref{sec-introunstable}),  $(d,g)=(7,2)$ is the smallest degree and genus  in $\PP^4$ for which the stability of the normal bundles of curves is an interesting question.  In this section, we use a cohomological approach to settle this case.
\begin{theorem}\label{724}
Assume the characteristic of the base field is not 2. 
If $C$ is a general BN-curve of genus 2 and degree 7 in $\PP^4$, then $N_{C\vert \PP^4}$ is stable.
\end{theorem}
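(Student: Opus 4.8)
The plan is to prove stability of $N := N_{C\vert\PP^4}$ by reducing it to a pair of cohomological vanishing statements on the smooth curve $C$, verified without any degeneration. First I record the numerics: from $0 \to TC \to T\PP^4|_C \to N \to 0$ one gets $\deg N = 37$ and $\rk N = 3$, so $\mu(N) = 37/3$, and a short chase through the Euler and normal sequences gives $h^0(N) = 34$, $h^1(N) = 0$. Since $\gcd(37,3)=1$, no proper subbundle of $N$ can have slope exactly $37/3$; hence semistability of $N$ already forces stability, and it suffices to rule out destabilizing line subbundles and line-bundle quotients.

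Next I reduce to a vanishing statement for \emph{every} line bundle of a fixed degree. A destabilizing line subbundle $L \subset N$ has $\deg L \geq 13$; twisting the inclusion by $\OO_C(-D)$ for an effective divisor $D$ of degree $\deg L - 13$ produces a nonzero map from a degree-$13$ line bundle into $N$. So it suffices to show $\Hom(M,N) = H^0(N \otimes M^{-1}) = 0$ for every $M \in \Pic^{13}(C)$. Dually, a destabilizing quotient line bundle has degree $\leq 12$, and the same twisting trick reduces this to $\Hom(N,P) = H^0(N^\vee \otimes P) = 0$ for every $P \in \Pic^{12}(C)$. These are Brill--Noether-type statements for the rank-$3$ bundles $N \otimes M^{-1}$ (with $\chi = -5$) and $N^\vee \otimes P$ (with $\chi = -4$); the expected codimensions of the jumping loci in the $2$-dimensional $\Pic$ are $6$ and $5$, both exceeding $\dim \Pic = 2$, so the vanishing is automatic for generic $M$ and $P$ and the whole content lies in the special locus.

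To handle an arbitrary $M$, I twist the normal and Euler sequences by $M^{-1}$. As $TC \otimes M^{-1}$ has degree $-15$ it has no sections, so from $0 \to TC \otimes M^{-1} \to T\PP^4|_C \otimes M^{-1} \to N \otimes M^{-1} \to 0$ the vanishing $H^0(N \otimes M^{-1}) = 0$ is equivalent to $H^0(T\PP^4|_C \otimes M^{-1}) = 0$ together with injectivity of the connecting map $H^1(TC \otimes M^{-1}) \to H^1(T\PP^4|_C \otimes M^{-1})$. Feeding the twisted Euler sequence $0 \to M^{-1} \to \OO_C(1)^{\oplus 5} \otimes M^{-1} \to T\PP^4|_C \otimes M^{-1} \to 0$ into the first condition and Serre-dualizing, each requirement becomes the surjectivity of an explicit multiplication (Petri-type) map between spaces of sections built from $\OO_C(1)$, $K_C$ and $M$; for instance $H^0(T\PP^4|_C \otimes M^{-1}) = 0$ is equivalent to surjectivity of $W \otimes H^0(K_C \otimes \OO_C(-1) \otimes M) \to H^0(K_C \otimes M)$, where $W \subset H^0(\OO_C(1))$ is the $5$-dimensional space of coordinate sections defining the projection from $\PP^5$. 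The quotient condition is treated symmetrically via the conormal and dual Euler sequences.

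The main obstacle is establishing these surjectivities \emph{uniformly} over all special $M$ and $P$, where the jumping loci could a priori be nonempty. Here I would use the genus-$2$ geometry: expressing $M$ in terms of the hyperelliptic pencil $|K_C|$ and effective divisors and running the base-point-free pencil trick, the surjectivity of each multiplication map reduces to a short list of boundary cases, namely those $M$ for which an auxiliary line bundle acquires unexpected sections. The hypothesis $\mathrm{char}\,k \neq 2$ enters precisely at this point: it guarantees that the hyperelliptic double cover and the projection center $W$ are in sufficiently general position to force the multiplication maps to have maximal rank, whereas in characteristic $2$ the Weierstrass/Frobenius degeneracies (compare the genus-$1$ phenomenon in Proposition \ref{prop-g1distinct}) obstruct the argument. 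Completing this finite case analysis for both $M \in \Pic^{13}(C)$ and $P \in \Pic^{12}(C)$ yields the two vanishings and hence the stability of $N$.
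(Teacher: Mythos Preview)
Your reduction is clean and correct: since $\gcd(37,3)=1$, stability is equivalent to the pair of vanishings $H^0(N\otimes M^{-1})=0$ for every $M\in\Pic^{13}(C)$ and $H^0(N^\vee\otimes P)=0$ for every $P\in\Pic^{12}(C)$, and your translation of the first of these into Euler/conormal multiplication maps is accurate.

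The gap is that you do not actually prove either vanishing. You write that ``the surjectivity of each multiplication map reduces to a short list of boundary cases'' and that ``completing this finite case analysis'' yields the result, but no case is analyzed, and $\Pic^{12}(C)$ is a surface, not a finite set. The paper's own argument shows that even the \emph{single} case $P=\OO_C(2H-K)$ (which is Lemma~\ref{noSplitting}) requires a delicate coordinate computation in $\PP^5$, and the rank-$2$ quotient case (Lemma~\ref{stableQuotient}) is harder still, involving a valuation-theoretic argument on an explicit $4\times 6$ matrix of partial derivatives; the characteristic $\ne 2$ hypothesis enters there through the coefficients of that matrix, not through generic-position arguments for the hyperelliptic cover as you suggest. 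The base-point-free pencil trick will get you the multiplication map $W\otimes H^0(K_C(-1)\otimes M)\to H^0(K_C\otimes M)$ surjective for generic $M$, but the second condition---injectivity of $H^1(TC\otimes M^{-1})\to H^1(T\PP^4|_C\otimes M^{-1})$---is not itself a multiplication map: it mixes in the differential of the embedding, and this is exactly where the paper has to work hard.

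By contrast, the paper avoids checking all $P$ by exploiting the geometry: $C$ lies on a projected scroll $S\cong\PP^1\times\PP^1$, giving a canonical degree-$12$ line subbundle $N_{C|S}\cong\OO_C(2H-K)\subset N$. Since $12$ and $37/3$ are consecutive Farey fractions of height $<3$, stability follows from (i) the non-splitting of this subbundle and (ii) stability of the rank-$2$ quotient $N_{S|\PP^4}|_C$. For (ii), a Franchetta-type argument pins the potential destabilizing $L$ to a restriction $\OO_{\tilde S}(a,b)|_C$, so only one family needs checking rather than all of $\Pic^{12}$. Your approach might be completable, but as written it stops just before the actual work begins.
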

\begin{proof}
Let $C\subset \PP^4$ be a general BN-curve of genus 2 and degree 7. Let $H$ be the hyperplane class on $C$, and let $K$ be the canonical class on $C$. Let $s_1,s_2$ be two independent global sections of $K$. Let $D=H-2K$ and let $t_1,t_2$ be two independent sections of $D$. Since $C$ is a general BN-curve, the line bundle $\OO_C(H)$ is a general line bundle of degree 7 on $C$. Hence, $D-K= H-3K$ is not effective, so $\lvert D\rvert$ is a basepoint free pencil. 

We first claim that the multiplication map
\[
M:H^0(C,\OO(2K))\otimes H^0(C,\OO(D))\ra H^0(C,\OO(H))
\]
is an isomorphism. Since $h^0(C,\OO(2K))=3$, $h^0(C,\OO(D))=2$, and $h^0(C,\OO(H))=6$ by Riemann-Roch, it suffices to show that $M$ has a six-dimensional image. If $q\in C$ is a ramification point of the canonical map $C\ra \PP^1$, then sections of $H^0(C,2K)$ vanish at $q$ to order $0,2$, or $4$. By the generality of $H$ and hence $D$, sections of $H^0(C,D)$ vanish to order 0 or 1 at $q$. Hence, $H^0(C,2K)\otimes H^0(C,D)$ contains pairs of sections whose  product vanishes at $q$ to order $i$ for all $0\leq i\leq 5$. Therefore, $M$ has a 6-dimensional image and   is an isomorphism.

We thus get an embedding $i:C\ra \PP^5$ given in coordinates by
\[
(s_1^2t_1, s_1s_2t_1,s_2^2t_1,s_1^2t_2,s_1s_2t_2,s_2^2t_2)
\]
Let $\tilde{C}=i(C)\subset \PP^5$, and let $x_0,\ldots,x_5$ be the coordinates on $\PP^5$. Since $C\subset \PP^4$  is embedded by an incomplete linear series contained in $\lvert H \rvert$, it is the projection of the curve $\tilde{C}\subset \PP^5$ from a general point $p\in \PP^5$. Let $\pi_p:\PP^5\dashrightarrow \PP^4$ denote the projection, so $\pi_p(\tilde{C})=C$.

The pair of maps $C \to \PP^1$ defined by $|K|$ and $|D|$ embed $C$  into $\PP^1 \times \PP^1$ as a curve of type $(2,3)$.
Since $D= H-2K$, the line bundle $\OO_{\PP^1\times \PP^1}(1,2)$  restricts to $H$ on  $C$. Since $H^1(\OO_{\PP^1 \times \PP^1}(-1,-1))=0$, the long exact sequence associated to the exact sequence
$$0 \to \OO_{\PP^1 \times \PP^1}(-1,-1) \to \OO_{\PP^1 \times \PP^1}(1,2) \to \OO_C(H) \to 0$$ shows that the linear system $|\OO_{\PP^1 \times \PP^1}(1,2)|$ restricts to the complete linear system $|H|$ on $C$.  Hence, $\tilde{C}$ is contained in the scroll $\tilde{S}$ given by the embedding of $\PP^1 \times \PP^1$ by the complete linear series $|\OO_{\PP^1 \times \PP^1}(1,2)|$.  The normal bundle $N_{\tilde{C}\vert \tilde{S}}\cong \OO_{\tilde{S}}(\tilde{C})\vert_{\tilde{C}}$  has degree 12. 

Let $S$ be $\pi_p(\tilde{S})$ in $\PP^4$. Since $\tilde{S}$ contains $\tilde{C}$, $S$ contains $C$. Moreover, since $p$ is general, $p$ is not contained in the closure of the union of lines between a point on $\tilde{C}$ and a distinct point on $\tilde{S}$. So $S$ is smooth along $C$, and the pullback by projection induces an isomorphism $N_{C\vert S}\cong N_{\tilde{C}\vert \tilde{S}}$. Hence $N_{C\vert S}$ is a subbundle of $N_{C\vert \PP^4}$ isomorphic to the degree 12 bundle $\OO_C(2D+3K)\cong \OO_C(2H-K)$, and we have a sequence
\[
0\ra \OO_C(2H-K)\xrightarrow{i} N_{C\vert \PP^4}\ra N_{S\vert \PP^4}\vert_C\ra 0.
\]
Since $\mu(\OO_C(2H-K))=12$ and $\mu(N_{C\vert \PP^4})=12 \frac{1}{3}$ are consecutive Farey fractions, to prove $N_{C\vert \PP^4}$ stable, by \cite{clv23} Lemma 2.5 it suffices to show:
\begin{enumerate}
    \item The inclusion $i:\OO_C(2H-K)\ra N_{C\vert \PP^4}$ does not split.
    \item   $N_{S\vert \PP^4}\vert_C$ is stable.
\end{enumerate}
These are the content of Lemma \ref{noSplitting} and Lemma \ref{stableQuotient} below.
\end{proof}
\begin{lemma} \label{noSplitting}
Under the hypotheses of Theorem \ref{724}, $H^0(C,N^*_{C\vert \PP^4}(2H-K))=0$.
\end{lemma}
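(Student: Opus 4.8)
The plan is to compute $H^0(C,N^*_{C\vert\PP^4}(2H-K))$ directly by splicing together the conormal and Euler sequences and then showing the induced map on global sections is injective. Twisting the conormal sequence $0\to N^*_{C\vert\PP^4}\to\Omega_{\PP^4}\vert_C\to\Omega_C\to 0$ by $\OO_C(2H-K)$ and using $\Omega_C\cong\OO_C(K)$ gives
\[
0\to N^*_{C\vert\PP^4}(2H-K)\to \Omega_{\PP^4}\vert_C(2H-K)\to \OO_C(2H)\to 0,
\]
so by left exactness $H^0(N^*_{C\vert\PP^4}(2H-K))$ is the kernel of $H^0(\Omega_{\PP^4}\vert_C(2H-K))\to H^0(\OO_C(2H))$. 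Restricting and twisting the Euler sequence gives $0\to\Omega_{\PP^4}\vert_C(2H-K)\to\OO_C(H-K)^{\oplus 5}\xrightarrow{\phi}\OO_C(2H-K)\to 0$, where $\phi$ is multiplication by the coordinate sections $x_0,\dots,x_4\in H^0(\OO_C(H))$. Hence a class in $H^0(N^*_{C\vert\PP^4}(2H-K))$ is precisely a tuple $(f_0,\dots,f_4)$ with $f_i\in H^0(\OO_C(H-K))$ satisfying \emph{both} the value condition $\sum_i x_if_i=0$ in $H^0(\OO_C(2H-K))$ and the derivative condition $\sum_i f_i\,dx_i=0$ in $H^0(\OO_C(2H))$; equivalently $\sum_i f_i\,j^1(x_i)=0$, where $j^1(x_i)$ is the $1$-jet of $x_i$. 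The goal is to show the only such tuple is $0$.

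To exploit the scroll geometry it is cleanest to pass to $\tilde C\subset\PP^5$, whose coordinates are the products $q_\alpha t_c$ with $q_\alpha\in\{s_1^2,s_1s_2,s_2^2\}$ a basis of $H^0(2K)$ and $t_c$ a basis of $H^0(D)$. Since $C$ is the projection of $\tilde C$ from a general point, the sequence $0\to\OO_C(H)\to N_{\tilde C\vert\PP^5}\to N_{C\vert\PP^4}\to 0$ dualizes and twists to
\[
0\to N^*_{C\vert\PP^4}(2H-K)\to N^*_{\tilde C\vert\PP^5}(2H-K)\to \OO_C(H-K)\to 0,
\]
so it suffices to prove $H^0(N^*_{\tilde C\vert\PP^5}(2H-K))=0$. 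Writing $y_{\alpha c}=q_\alpha t_c$ and setting $P_c=\sum_\alpha g_{\alpha c}q_\alpha\in H^0(H+K)$, the value condition reads $t_1P_1+t_2P_2=0$. Because $\OO_C(D)$ is a base-point-free pencil, the base-point-free pencil trick forces $(P_1,P_2)=(t_2R,-t_1R)$ for a single section $R\in H^0(3K)$. Throughout I use the isomorphisms $H^0(K)\otimes H^0(D)\cong H^0(H-K)$ and $H^0(2K)\otimes H^0(D)\cong H^0(H)$, both established by the same pencil-trick computation used for the multiplication map $M$ in the proof of Theorem \ref{724}.

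The main obstacle is to feed this description into the derivative condition $\sum_{\alpha,c} g_{\alpha c}\big(t_c\,dq_\alpha+q_\alpha\,dt_c\big)=0$ and deduce that $R=0$ and every $g_{\alpha c}=0$; note that the value equation by itself leaves substantial freedom (the $q_\alpha$ have their own syzygy $q_0q_2=q_1^2$), so all the real work lies in differentiating the scroll coordinates and ruling out cancellation. This is exactly where the hypothesis $\operatorname{char}k\neq 2$ is indispensable: it guarantees $dq_\alpha=d(s_as_b)\neq 0$, whereas in characteristic $2$ one has $d(s_1^2)=2s_1\,ds_1=0$, so the argument—and indeed the statement—degenerates, consistent with the excluded case. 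I expect to conclude by separating the derivative identity into its $dt_c$ and $ds_a$ components via the product structure, reducing each piece to a linear-algebra statement over $H^0(K)$, $H^0(2K)$, $H^0(3K)$ and $H^0(D)$, and invoking once more the injectivity of the relevant multiplication maps. The net effect is that $(f_i)\mapsto\big(\sum_i x_if_i,\ \sum_i f_i\,dx_i\big)$ is injective, whence $H^0(C,N^*_{C\vert\PP^4}(2H-K))=0$.
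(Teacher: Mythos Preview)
Your reduction to the vanishing of $H^0(\tilde C,N^*_{\tilde C\vert\PP^5}(2H-K))$ is where the argument breaks: that group is \emph{not} zero. The scroll $\tilde S\cong\PP^1\times\PP^1\subset\PP^5$ containing $\tilde C$ already produces nonzero sections. Concretely, in your notation take
\[
(g_{\alpha c})=(s_2t_2,\,-s_1t_2,\,0,\,-s_2t_1,\,s_1t_1,\,0)\in H^0(\tilde C,\OO(H-K))^{\oplus 6};
\]
one checks directly that both $\sum y_{\alpha c}g_{\alpha c}=0$ and $\sum g_{\alpha c}\,dy_{\alpha c}=0$ hold (in fact already as identities on $\tilde S$, since this tuple comes from $N^*_{\tilde S\vert\PP^5}$). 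So when you try to ``separate the derivative identity into its $dt_c$ and $ds_a$ components'' and force all $g_{\alpha c}=0$, the computation cannot succeed: there is a two-dimensional family of nonzero solutions, and possibly a third. A Riemann--Roch check confirms this is consistent: $\chi(N^*_{\tilde C\vert\PP^5}(2H-K))=0$, so nothing forces $h^0$ to vanish.

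What the paper does instead is exactly what your exact sequence
\[
0\to N^*_{C\vert\PP^4}(2H-K)\to N^*_{\tilde C\vert\PP^5}(2H-K)\to \OO_C(H-K)\to 0
\]
suggests once one knows the middle term is nonzero: prove that the connecting map $H^0(N^*_{\tilde C\vert\PP^5}(2H-K))\to H^0(\OO_C(H-K))$ is injective for a \emph{general} choice of projection centre $p$. This requires first bounding $\dim H^0(N^*_{\tilde C\vert\PP^5}(2H-K))\le 3$, then showing that for each nonzero section the set of $p$ for which it lies in the kernel is a proper subvariety of $\PP^5$; the genericity of $p$ is essential and cannot be removed. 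Your setup (Euler sequences, the multiplication isomorphisms, the pencil trick) is all correct and useful for the first step, but you must abandon the goal of killing $H^0$ on $\PP^5$ and instead analyse how the sections interact with the projection.
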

\begin{proof}
We prove this result by studying $H^0(C, N^*_{\tilde{C}\vert \PP^5}(2H-K))$. The conormal bundle fits into the exact sequence
\[
0\ra N^*_{\tilde{C}\vert \PP^5}(2H-K)\ra T^*\PP^5\vert_{\tilde{C}}(2H-K)\ra T^*\tilde{C}(2H-K)\ra 0.
\]
For $i\in\{0,1,3,4\}$, define the sections $u_i=x_is_1d\frac{x_{i+1}}{x_i}\in H^0(\tilde{S}, T^*\PP^5\vert_{\tilde{S}}\otimes \OO_{\tilde{S}}(3,1))$, and for $i\in \{0,1,2\}$, define $v_i=x_it_1d\frac{x_{i+3}}{x_i}\in H^0(\tilde{S}, T^*\PP^5\vert_{\tilde{S}}\otimes \OO_{\PP^1\times \PP^1}(2,2))$. We have that
 $T^*\PP^5\vert_{\tilde{S}}\otimes \OO_{\PP^1\times \PP^1}(3,2)$ is globally generated, with global sections
\begin{align*}
t_ju_{i} &\;\text{for}\; i\in \{0,1,3,4\},\, j\in \{1,2\}\\
s_jv_i& \; \text{for}\; i\in\{0,1,2\},\, j\in \{1,2\}.
\end{align*}
These span a 12-dimensional subspace of the 13-dimensional space $H^0(\tilde{C},T^*\PP^5\vert_C(2H-K))$. Their image in $H^0(\tilde{C}, T^*\tilde{C}(2H-K))$ is the vector space given by
\begin{equation}\label{257}
\langle s_1t_1^2,s_1t_1t_2,s_1t_2^2,s_2t_1^2,s_2t_1t_2,s_2t_2^2\rangle\cdot   s_1^2d\frac{s_2}{s_1}+\langle s_1^3,s_1^2s_2,s_1s_2^2,s_2^3\rangle \cdot t_1^2 d\frac{t_2}{t_1}
\end{equation}
Every form in the first part of the sum vanishes on the ramification divisor associated to $K$, $R_K$, while no form in the second part of the sum does (since $t_1^2d\frac{t_2}{t_1}$ vanishes on the ramification divisor of $D$, which is disjoint from that of $K$, and no section in the space $\langle s_1^3,s_1^2s_2,s_1s_2^2,s_2^3\rangle$ can vanish at more than three of the points of $R_K$). Hence, the vector space (\ref{257}) has dimension 10.

We then have that the two forms $R_1:=t_2u_0-t_1u_3$ and $R_2:=t_2u_1-t_1u_2$ span the kernel of the map $H^0(\tilde{S}, T^*\PP^5\vert_{\tilde{S}}(3,2))\ra H^0(\tilde{C},T^*\tilde{ C}(2H-K))$. Their images in $H^0(\tilde{C},\OO(H-K)^{\oplus 6})$ are given by
\begin{align*}
R'_1&=s_2t_2dx_0-s_1t_2dx_1-s_2t_1dx_3+s_1t_1dx_4 \\
R'_2&=s_2t_2dx_1-s_1t_2dx_2-s_2t_1dx_4+s_1t_1dx_5.
\end{align*}
Let $V\subset H^0(\tilde{C}, \OO_{\tilde{C}}(H-K)^{\oplus 6})$ be the image of $H^0(N^*_{\tilde{C}\vert \PP^5}(2H-K))$. We have $2\leq \dim(V)\leq 3$ because $R'_1,R'_2$ are contained in $V$, $h^0(T^*\PP^5\vert_{\tilde{C}}(2H-K))=13$, and 
\[
\dim \mathrm{Im}( H^0(\tilde{C}, T^*\PP^5\vert_{\tilde{C}}(2H-K))\ra H^0(\tilde{C}, T^*\tilde{C}(2H-K)))\geq 10.
\]
For the lemma to be false, we must have that for a general map $$e_p:\OO_{\tilde{C}}(H-K)^{\oplus 6}\ra \OO_{\tilde{C}}(H-K),$$ the intersection $\ker(e_p)\cap V\subset H^0(\tilde{C},\OO_{\tilde{C}}(H-K)^{\oplus 6})$ is nonzero. If $\dim(V)=2$, this does not occur: if $e_p$ is induced by projection from $(0,1,0,0,0,0)$---and hence sends $dx_1$ to 1 and all other $dx_i$ to 0---we have $e_p(R'_1)=-s_1t_2$ and $e_p(R'_2)=s_2t_2$. Since these two images are linearly independent, $e_p$ is nonzero on every nonzero element of $V$. 

Now suppose $\dim(V)=3$, and $V$ contains a third linearly independent section $w$. More specifically, by subtracting appropriate constant multiples of $R'_1$ and $R'_2$, let $w$ be that section whose $dx_1$ term is a multiple of $t_1$. Suppose the lemma is false, so $\ker(e_p)\cap V\neq 0$ for all maps $e_p$. Again using projection from $(0,1,0,0,0,0)$, we have that since the $dx_1$ term in $w$ is a multiple of $t_1$, it is actually 0. Say we have
\[
w=\sum_{i\neq 1} f_i dx_i.
\]
Projecting from $(1,1,0,0,0,0)$ then gives that $f_0$ is a multiple of $t_2$, and projecting from $(0,1,1,0,0,0)$ gives that $f_2$ is a multiple of $t_2$.

Let $\tilde{w}\in H^0(\tilde{S}, \OO(1,1)^{\oplus 6})$ be the section restricting to $w$ by the isomorphism  $$H^0(\tilde{S},\OO(1,1)^{\oplus 6})\to H^0(\tilde{C},\OO(H-K)^{\oplus 6}),$$ and let $f:H^0(\tilde{S},\OO(1,1)^{\oplus 6})\to H^0(\tilde{S}, \OO(3,2))$ be the map coming from restricting the Euler sequence on $\PP^5$ to $\tilde{S}$.

Because $\tilde{w}$ is not in the span of $R_1$ and $R_2$, $\tilde{w}$ does not lift to a global section of $T^*\PP^5\vert_{\tilde{S}}(3,2)$, so $f(\tilde{w})\neq 0$. And we have that $f(\tilde{w})$ is actually a multiple of $t_2$, because $f_0,f_1,f_2$ are all multiples of $t_2$, while $dx_3,dx_4,dx_5$ all map to a multiple of $t_2$, so $f(\tilde{w})=t_2g$ for some $g$. Finally, since $w$ lifts to a section of $T^*\PP^5\vert_{\tilde{C}}(2H-K)$, $f(\tilde{w})$ maps to 0 under the map $H^0(\tilde{S}, \OO(3,2))\ra H^0(\tilde{C}, 2H-K)$, and hence $f(\tilde{w})$ is a constant multiple of the section of $\OO(3,2)$ cutting out $\tilde{C}$. This contradicts $f(\tilde{w})$ being a nonzero multiple of $t_2$.
\end{proof}
\begin{lemma}\label{stableQuotient}
In the same setup as Theorem \ref{724}, $N_{S\vert \PP^4}\vert_C$ is stable.
\end{lemma}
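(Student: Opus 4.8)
The plan is to realize $N_{S|\PP^4}|_C$ as an explicit quotient coming from the projection $\pi_p$ and then rule out destabilizing quotient line bundles directly. Exactly as projection from a general external point produces the curve pointing-bundle sequence recalled in \S\ref{sec-prelim}, projecting the surface $\tilde{S}$ from the general point $p$ identifies $N_{S|\PP^4}$ in a neighborhood of $C$ with the quotient $N_{\tilde{S}|\PP^5}/\OO_{\tilde{S}}(1,2)$, where $\OO_{\tilde{S}}(1,2)\hookrightarrow N_{\tilde{S}|\PP^5}$ is the sub-line-bundle determined by $p$. Restricting to $C$ yields a surjection $N_{\tilde{S}|\PP^5}|_C\twoheadrightarrow N_{S|\PP^4}|_C$ with kernel $\OO_C(H)$, so it suffices to understand $N_{\tilde{S}|\PP^5}|_C$ together with the pointing inclusion $\OO_C(H)\hookrightarrow N_{\tilde{S}|\PP^5}|_C$.

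I would then compute $N_{\tilde{S}|\PP^5}$ on $\tilde{S}=\PP^1\times\PP^1$. From $0\to T\tilde{S}\to T\PP^5|_{\tilde{S}}\to N_{\tilde{S}|\PP^5}\to 0$ with $T\tilde{S}=\OO(2,0)\oplus\OO(0,2)$ and the restricted Euler sequence $0\to\OO_{\tilde{S}}\to\OO_{\tilde{S}}(1,2)^{\oplus 6}\to T\PP^5|_{\tilde{S}}\to 0$, a Chern class computation gives $c_1=4\alpha+10\beta$ and $c_2=28$ (with $\alpha,\beta$ the two ruling classes), and one checks $N_{\tilde{S}|\PP^5}\cong\OO(2,2)\oplus\OO(1,4)^{\oplus 2}$; in any case its restriction to $C\in|\OO(2,3)|$ has associated graded pieces $\OO_C(2H-2K)$ (degree $10$) and $\OO_C(H+2K)$ (degree $11$, with multiplicity two), of total degree $32$. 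The pointing inclusion $\OO_C(H)\hookrightarrow\OO_C(2H-2K)\oplus\OO_C(H+2K)^{\oplus 2}$ is given by a triple $(\sigma,\tau_1,\tau_2)$ with $\sigma\in H^0(\OO_C(D))$ and $\tau_1,\tau_2\in H^0(\OO_C(2K))$; these are precisely the linear-in-$t_i$ and quadratic-in-$s_i$ data cut out by the coordinates of $p$, so for general $p$ the section $\sigma$ is nonzero and $\tau_1,\tau_2$ are linearly independent. Thus $Q:=N_{S|\PP^4}|_C$ is the cokernel of $(\sigma,\tau_1,\tau_2)$, a rank $2$ bundle of degree $25$ with $\det Q\cong\OO_C(3H+2K)$.

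Since $\deg Q=25$ is odd, $Q$ is stable as soon as it is semistable, so it suffices to show $Q$ has no quotient line bundle $L$ of degree $\ell\le 12$. Writing $E'=\OO_C(2H-2K)\oplus\OO_C(H+2K)^{\oplus 2}$, such a quotient is a surjection $(\phi_0,\phi_1,\phi_2)\colon E'\to L$ with $\phi_0\in H^0(L-2H+2K)$ and $\phi_i\in H^0(L-H-2K)$, subject to the factorization condition $\phi_0\sigma+\phi_1\tau_1+\phi_2\tau_2=0$ in $H^0(L-H)$. For $\ell\le 9$ all three Hom groups vanish for degree reasons. For $\ell=10,11$ the nonvanishing of the relevant Hom groups forces $L$ into a short list of special classes, and in each case either the factorization condition collapses to $\sigma=0$ or to $\tau_1,\tau_2$ being dependent (both excluded by generality of $p$), or the only available homomorphism is a section of a positive-degree line bundle, which vanishes somewhere and hence is not surjective.

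The main obstacle is the borderline case $\ell=12$. The generic $L$ is handled as above (the unique $\phi_0$ lies in a degree-$2$ line bundle, so it has zeros and cannot surject), and the special value $L\cong\OO_C(2H-K)$ is handled using that $H-3K$ is not effective — exactly the fact established at the beginning of the proof of Theorem \ref{724} to see that $|D|$ is basepoint free. The genuinely delicate subcase is the one-parameter family $L\cong\OO_C(H+2K+x)$, $x\in C$, where $H^0(L-H-2K)=H^0(\OO_C(x))$ is nonzero; there the factorization equation takes the form $\phi_0\sigma=-\theta_x(c_1\tau_1+c_2\tau_2)$ with $\theta_x$ the section vanishing at $x$, forcing an equality of the two associated degree-$5$ divisors in $|2K+x|$. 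I expect to rule this out by a dimension count showing that the locus of triples $(\sigma,\tau_1,\tau_2)$ admitting such a relation for some $x\in C$ is a proper subvariety of the parameter space, so that the general center $p$ avoids it; establishing this uniformly in $x$ — equivalently, the injectivity of the relevant multiplication map — is where the real work lies, and is the quotient-side analogue of the non-splitting computation carried out in Lemma \ref{noSplitting}.
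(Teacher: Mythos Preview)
Your route is genuinely different from the paper's. You try to split $N_{\tilde S|\PP^5}$ explicitly and then run a case analysis on every possible degree $\ell\le 12$ quotient line bundle of $Q$. The paper never decomposes $N_{\tilde S|\PP^5}$; instead it first invokes a Franchetta-type argument (\cite{Woo14}) to force any destabilizing $L$ to be of the form $\OO_{\tilde S}(a,b)|_C$, then shows $H^0(N^*_{S|\PP^4}|_C(L))=0$ by (i) bounding $h^0(N^*_{\tilde S|\PP^5}|_C(L))\le 3$ via the explicit Jacobian matrix $M_1$ built from the $s_i,t_j$, and (ii) a valuation argument at a point $q$ with $s_1(q)=t_1(q)=0$ showing each nonzero section of $N^*_{\tilde S|\PP^5}|_C(L)$ projects to a $\ge 3$-dimensional span inside $H^0(\OO_C(L-H))$, so that a general $e_p$ kills nothing.

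There are two genuine gaps in your plan. First, the splitting $N_{\tilde S|\PP^5}\cong\OO(2,2)\oplus\OO(1,4)^{\oplus 2}$ is asserted, not proved; the natural filtration coming from $\tilde S\subset\PP^1\times\PP^2\subset\PP^5$ gives $0\to\OO(0,4)\to N_{\tilde S|\PP^5}\to\OO(2,3)^{\oplus 2}\to 0$, and $\Ext^1(\OO(2,3),\OO(0,4))=H^1(\OO(-2,1))\neq 0$, so splitting as you claim is not automatic and your surjectivity analysis (which treats $E'$ as a direct sum) depends on it. Second, and more seriously, the case $\ell=12$, $L\cong\OO_C(H+2K+x)$ is exactly where your method runs into trouble: this is a full one-parameter family of candidate $L$'s, and you must show that for a general $p$ no $x\in C$ yields a surjection satisfying $\phi_0\sigma=-\theta_x(c_1\tau_1+c_2\tau_2)$. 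The map $p\mapsto(\sigma,\tau_1,\tau_2)$ lands in a proper linear subspace of $H^0(D)\oplus H^0(2K)^{\oplus 2}$, so the ``general $p$'' dimension count you allude to is delicate and not carried out. This is precisely the step the paper sidesteps: the Franchetta reduction cuts $L$ down to a discrete list of restrictions $\OO_{\tilde S}(a,b)|_C$, so no continuous family survives to be analyzed. Without that reduction (or an equivalent device), your $\ell=12$ case is the whole lemma.
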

\begin{proof}
Suppose  $N_{S\vert \PP^4}\vert_C$ is unstable for $C$ a general curve on $S$. We  continue to regard $C,S$ as the projection of $\tilde{C}\subset \tilde{S}\subset \PP^5$ from a general point $p$ to $\PP^4$ via the projection map $\pi_p$. If $C'\subset \tilde{S}$ is any smooth curve of class $(3,2)$, then $N_{S\vert \PP^4}\vert_{\pi_p(C')}$ is likewise unstable, and has some maximal destabilizing line quotient bundle $L_{C'}$ of degree at most 12 . The map $C'\mapsto L_{C'}$ is a rational section of the relative Picard variety associated to the moduli of $(3,2)$ curves on $\tilde{S}$; in particular, by \cite{Woo14}, 
$L^*(C')\cong \OO_{\tilde{S}}(a,b)\vert_{C'}$ for some integers $a,b$ that do not depend on $C'$ or $p$, given that $p$ is general. 

We establish a contradiction by showing $H^0(C,N^*_{S\vert \PP^4}\vert_C(L))=0$, where $L\cong \OO_{\tilde{S}}(a,b)\vert_C$.  
This proof has three main steps, again mostly working in $\PP^5$:
\begin{enumerate}
    \item If $\deg(L)\leq 12$, then $h^0(\tilde{C}, N^*_{\tilde{S}\vert \PP^5}\vert_{\tilde{C}}(L))\leq 3$.
    \item Given a nonzero section $w\in H^0(\tilde{C},N^*_{\tilde{S}\vert \PP^5}\vert_{\tilde{C}}(L))$, the image of $w$ in $H^0(\tilde{C}, L\otimes \OO_{\tilde{C}}(-H)^{\oplus 6})$ spans a subspace of $H^0(C, L\otimes \OO_{\tilde{C}}(-H)))$ of dimension at least three.
    \item Facts (1) and (2) imply $H^0(C, N^*_{S\vert \PP^4}\vert_C(L))=0$ for $C,S$ the projection of $\tilde{C},\tilde{S}$ from a general point.
\end{enumerate}

\textbf{(1)  $h^0(C, N^*_{\tilde{S}\vert \PP^5}\vert_{\tilde{C}}(L))\leq 3$:}

We define a divisor class associated to $L$ as follows. If $L\cong \OO_{\tilde{C}}(D+4K)$, set $L'=D+4K$. Otherwise,
let $\Delta$ be a general effective divisor class of degree $12-\deg(L)$ and let $L'$ be the divisor class associated to the line bundle $L(\Delta)$. We show $h^0(\tilde{C},N^*_{\tilde{S}\vert \PP^5}\vert_{\tilde{C}}(L'))\leq 3 $, which immediately implies the claim.
Applying the Euler sequences to $T^*\PP^5$ and $T^*\tilde{S}$ gives the commutative diagram on $\tilde{C}$,
\[
\begin{tikzcd}
 N^*_{\tilde{S}\vert \PP^5}\vert_{C}(L')\arrow[r, hookrightarrow]\arrow[d, "\cong"]& T^*\PP^5\vert_{C}(L')\arrow[r, two heads]\arrow[d, hookrightarrow]& T^*\tilde{S}\vert_{C}(L') \arrow[d,hookrightarrow]\\
  N^*_{\tilde{S}\vert \PP^5}\vert_{C}(L')\arrow[r, hookrightarrow]&\OO_{C}(L'-H)^{\oplus 6}\arrow[r,"M_1"]\arrow[d,two heads]&\OO_{C}(L'-K)^{\oplus 2}\oplus \OO_{C}(L'-D)^{\oplus 2}\arrow[d,two heads]&\\
 &\OO_{C}(L')\arrow[r] &\OO_{C}(L')^{\oplus 2}
\end{tikzcd},
\]
which in turn gives an exact sequence
\[
0\ra N^*_{\tilde{C}\vert \PP^5}(L')\ra \OO_{\tilde{C}}(L'-H)^{\oplus 6}\xrightarrow{M_1} \OO_{\tilde{C}}(L'-K)^{\oplus 2}\oplus \OO_{\tilde{C}}(L'-D)^2\xrightarrow{M_2} \OO_{\tilde{C}}(L')\ra 0.
\]
We can describe $M_1$ explicitly as a matrix of partial derivatives of the $x_i$, where the $i$th column consists of the four partial derivatives $\frac{\partial x_i}{\partial s_1}, \frac{\partial x_i}{\partial s_2},\frac{\partial x_i}{\partial t_1},\frac{\partial x_i}{\partial t_2}$.

In that framing, we have
\[
M_1=\begin{bmatrix}
2s_1t_1&s_2t_1&0&2s_1t_2&s_2t_2&0\\
0&s_1t_1&2s_2t_1&0&s_1t_2&2s_2t_2\\
s_1^2&s_1s_2&s_2^2&0&0&0\\
0&0&0&s_1^2&s_1s_2&s_2^2
\end{bmatrix}
\]
and 
\[
M_2=\begin{bmatrix}
s_1&s_2&-2t_1&-2t_2
\end{bmatrix}
\]
First suppose $L'=4K+D$.
Then $h^0(T^*\PP^5\vert_{\tilde{C}}(L'))=8$, and if we set
\[
u_i=x_is_1d\frac{x_{i+1}}{x_i}\in H^0(T^*\PP^5\vert_{\tilde{C}}(3K+D)),
\]
for $i\in \{0,1,3,4\}$, we have that the 8 sections $\{s_ju_i\}_{j\in\{1,2\},i\in \{0,1,3,4\}}$ are linearly independent and hence span $H^0(T^*\PP^5\vert_{\tilde{C}}(L'))$. We then have
\[
M_1\begin{bmatrix}
u_0&u_1&u_3&u_4
\end{bmatrix}
=
\begin{bmatrix}
-s_1s_2t_1 & -s_2^2t_1 & -s_1s_2t_2 & -s_2^2t_2\\
s_1^2t_1 & s_1s_2 t_1 & s_1^2 t_2 & s_1s_2t_2\\
0&0&0&0\\
0&0&0&0
\end{bmatrix}
\]
Hence, the images of the $s_ju_i$ span $H^0(\tilde{C},D+2K)\cdot \begin{bmatrix}s_2\\-s_1\\0\\0\end{bmatrix}$, a six-dimensional space, and $h^0(\tilde{C}, N^*_{\tilde{S}\vert \PP^5}\vert_{\tilde{C}}(L'))=2$.

Now suppose $L'\neq 4K+D$, so $\deg(L')=12$. We have that $L'-D-3K$ is a degree 3 divisor, and know $L'-D-4K$ is \emph{not} effective, 
so $L'-D-3K$ is the class of a base-point free pencil. Let $\beta_1,\beta_2$ be two independent sections of it. Likewise, let $\gamma$ be a section of $L'-2D-2K$. Then the images of the sections $\beta_js_i\in H^0(\tilde{C},T^*\PP^5\vert_{\tilde{C}}(L'))$  under $M_1$ span $H^0(\tilde{C}, L'-3K) \cdot \begin{bmatrix}s_2\\-s_1\\0\\0\end{bmatrix}$, a seven-dimensional space. Set $v_i=x_it_1d\frac{x_{i+3}}{x_i}$. The images of $\gamma v_i$ under $M_1$ span $H^0(\tilde{C}, 2K) \cdot \begin{bmatrix}0\\0\\\gamma t_2\\ -\gamma t_1\end{bmatrix}$, a three dimensional space. So the image of $H^0(\tilde{C}, T^*\PP^5\vert_{\tilde{C}}(L'))$ under $M_1$ has dimension at least 10; since 
\begin{align*}
h^0(\tilde{C}, T^*\PP^5\vert_{\tilde{C}}(L'))&=h^0(\tilde{C},\OO_{\tilde{C}}(L'-H)^{\oplus 6})-h^0(\tilde{C},\OO_{\tilde{C}}(L'))\\
&=24-11=13,
\end{align*}
we have that $h^0(N^*_{\tilde{S}\vert \PP^5}\vert_{\tilde{C}}(L'))\leq 3$.\\
\\
\textbf{(2) Given a nonzero section $w\in H^0(N^*_{\tilde{S}\vert \PP^5}\vert_C(L))$, the image of $w$ in $H^0(\tilde{C}, L\otimes \OO_{\tilde{C}}(-H)^{\oplus 6}))$ spans a subspace of $H^0(\tilde{C}, L\otimes \OO_{\tilde{C}}(-H)))$ of dimension at least three:}
For this part, we choose $s_1$ and $t_1$ both vanishing to order 1 at some point $q\in \tilde{C}$; hence neither $s_2$ nor $t_2$ vanish at $q$. Let the image of $w$ in $H^0(\tilde{C},L-H)^{\oplus 6}$ be 
$\sum_i w_i dx_i$. We need to show that the $w_i$ span a subspace of $H^0(\tilde{C},L-H)$ of dimension at least 3.
Let $e_i$ be the order of vanishing of $w_i$ at $q$; $e_i$ is either a nonnegative integer or $\infty$. Then the orders of vanishing at $p$ of the $i$th row of $M_1(\sum_i w_i dx_i) $ fit into the $4\times 6 $ matrix
\begin{equation}\label{tropicalMx}
\begin{bmatrix}
2+e_0&1+e_1&\infty&1+e_3&e_4&\infty\\
\infty&2+e_1&1+e_2&\infty&1+e_4&e_5\\
2+e_0&1+e_1&e_2&\infty&\infty&\infty\\
\infty&\infty&\infty&2+e_3&1+e_4&e_5
\end{bmatrix}.
\end{equation}
For $M_1(w)$ to vanish, each of its four components must vanish to infinite order at $q$. For the requisite cancellation to occur, we must have that the minimum value of each row of (\ref{tropicalMx}) occurs at least twice in that row. We claim that this requirement implies that there are at least 3 distinct finite values among the $e_i$.  First, we note that none of $e_2,e_4,e_5$ can be minimal; as otherwise there could be no shared minimum value in row 3, 1, 2 respectively. If $e_0$ is minimal, without loss of generality we may suppose $e_0=0$. Then either $e_1=1$ or $e_2=2$ to cancel in the third row. If $e_1=1$, to cancel in the second row we need at least one of $e_2=2$, $e_4=2$, or $e_5=3$; in all three cases we have at least three distinct values of $e_i$. If $e_1\neq 1$, $e_2=2$, and from the second row either $e_4=2$ or $e_5=3$; in the latter case we immediately have three distinct values of the $e_i$, while in the former case we have that from the fourth row either $e_3=1$ or $e_5=3$, so three distinct values of the $e_i$ appear.

If $e_1=0$, then $e_2=1$ to match in the third row. Then row 1 shows $e_3=1$ or $e_4=2$; in the latter case, there are three distinct values of $e_i$, and in the former case,  to cancel the fourth row we must have $e_4=2$ or $e_5=3$, again giving three distinct values of $e_i$.

Finally, if $e_3=0$, then either $e_4=1$ or $e_5=2$ to match in the fourth row. If $e_4=1$, then to match in the second row we either have $e_5=2$ and have 3 distinct $e_i$, have $e_1=0$, a case covered above, or have $e_2=1$. In this last case, we must have $e_1=0$ to match in the third row, and there is no match to the 1 in the first row of that column.

Hence, there are at least 3 distinct values of $e_i$ represented, and the $w_j$ vanish to at least three different orders at $q$. They hence span a subspace of  $H^0(\tilde{C}, L-H)$ of dimension at least 3. Note that due to the coefficients of 2 in the matrix $M_1$ this argument fails in characteristic 2. \\
\\
\textbf{(3) $H^0(C,N^*_{S\vert \PP^4}(L)\vert_C)=0$:}
As in the proof of Lemma \ref{noSplitting}
, for $p=(p_0,p_1,p_2,p_3,p_4,p_5)\in \PP^5\setminus \tilde{C}$ there is a diagram
\[
\begin{tikzcd}
N^*_{S\vert \PP^4}(L)\vert_C\arrow[r, hookrightarrow]\arrow[d]&N^*_{\tilde{S}\vert \PP^5}(L)\vert_C \arrow[r, two heads] \arrow[d]&N^*_{\tilde{C}\to p}(L)\arrow[d,"\cong"]\\
\OO_C(L-H)^{\oplus 5} \arrow[r,hookrightarrow]& \OO_C(L-H)^{\oplus 6} \arrow[r, two heads, "e_p"] &\OO_C(L-H),
\end{tikzcd}
\]
where $e_p$ is given by $e_p(\sum_i f_idx_i)=\sum_i p_if_i$.

A section $w\in H^0(C,N^*_{\tilde{S}\vert \PP^5}(L)\vert_C) $ lifts to  $H^0(N^*_{S\vert \PP^4}(L)\vert_C)$ if and only if its image in $H^0(C,\OO_C(L-H))$ is sent to zero by $e_p$. By (2), given any such $w$ nonzero, the codimension in $\Hom(\OO_C(L-H)^{\oplus 6},\OO_C(L-H))$, of $e_p$ with $e_p(w)=0$ is at least three. So if $p$, and hence $e_p$, is chosen in general position, the locus of $w\in H^0(C,N^*_{\tilde{S}\vert \PP^5}(L)\vert_C)$ with $e_p(w)=0$ is a subspace of codimension at least three. Hence, by (1), we have
\[
H^0(C,N^*_{S\vert \PP^4}\vert_C(L))=H^0(C, N^*_{\tilde{S}\vert \PP^5}\vert_C(L))\cap \ker(e_p) =0.
\]
\end{proof}

\bibliographystyle{plain}

\end{document}